\documentclass[11pt]{article}
\usepackage{amsmath}
\usepackage{amsfonts}
\usepackage{color}
\usepackage{latexsym}
\usepackage{tablefootnote}
\usepackage{epsfig}

\oddsidemargin .0cm
\evensidemargin .0cm

\setlength{\textwidth}{6.5in}

\topmargin .5cm

\setlength{\headheight}{.5cm}

\setlength{\headsep}{0.5cm}

\setlength{\textheight}{22.5cm}

\setlength{\footskip}{0.4in}

\usepackage{amsmath}
\usepackage{amsfonts}
\usepackage{latexsym}
\usepackage{amssymb}

\newtheorem{thm}{Theorem}[section]
\newtheorem{theorem}[thm]{Theorem}

\newtheorem{lemma}[thm]{Lemma}

\newtheorem{proposition}[thm]{Proposition}

\newtheorem{definition}[thm]{Definition}

\newcommand{\beq}{\begin{equation}}
\newcommand{\eeq}{\end{equation}}
\newcommand{\beqa}{\begin{eqnarray}}
\newcommand{\eeqa}{\end{eqnarray}}
\newcommand{\beqas}{\begin{eqnarray*}}
\newcommand{\eeqas}{\end{eqnarray*}}
\newcommand{\bi}{\begin{itemize}}
\newcommand{\ei}{\end{itemize}}

\newcommand{\vgap}{\vspace{.1in}}

\setcounter{page}{1}

\def\proof{\noindent{\bf Proof}: \ignorespaces}

\def\endproof{{\ \hfill\hbox{%
      \vrule width1.0ex height1.0ex
    }\parfillskip 0pt}\par}

\newcommand{\tos}{\rightrightarrows}
\newcommand{\R}{\mathbb{R}}

\newcommand{\cL}{{\cal L}}
\newcommand{\cX}{{\cal X}}

\newcommand{\cZ}{{\cal Z}}
\newcommand{\Z}{{\cal Z}}
\newcommand{\lam}{{\lambda}}

\newcommand{\cK}{{\cal K}}
\newcommand{\norm}[1]{\left\Vert#1\right\Vert}

\newcommand{\inner}[2]{\langle #1,#2\rangle}

\newcommand{\ttheta}{{\tilde \theta}}

\newcommand{\dom}{\mathrm{dom}\,}
\newcommand{\Dom}{\mathrm{Dom}\,}

\newcommand{\tx}{\tilde x}

\newcommand{\tz}{\tilde z}

\begin{document}

\pagestyle{plain}
\setcounter{page}{1}

\title{Complexity of the relaxed Peaceman-Rachford splitting method for
the sum of two maximal strongly monotone operators }

\author{Renato D.C. Monteiro\footnote{School of Industrial and Systems Engineering, Georgia Institute of Technology, Atlanta, GA 30332-0205. (email:
monteiro@isye.gatech.edu). The work of this author was partially supported by NSF Grant CMMI-1300221.},\ \ Chee-Khian Sim\footnote{Department of Mathematics, University of Portsmouth, Lion Gate Building, Lion Terrace, Portsmouth PO1 3HF. (email: chee-khian.sim@port.ac.uk).  This research was made possible through support by Centre of Operational Research and Logistics, University of Portsmouth.}}

\date{November 3, 2016 (Revised:  November 5, 2017)}

\maketitle

\begin{abstract}
This paper considers the relaxed Peaceman-Rachford (PR) splitting method for finding an approximate solution of a monotone
inclusion whose underlying operator consists of the sum of two maximal strongly monotone operators.
Using general results obtained in the setting of  a non-Euclidean hybrid proximal extragradient framework, {\color{black}{we extend a previous convergence result on  the
iterates generated by the relaxed PR splitting method, as well as establish new pointwise and ergodic convergence rate results for the method whenever an associated
relaxation parameter is within a certain interval.}}  An example is also discussed to demonstrate that the iterates may not converge
when the relaxation parameter is outside this interval.
\end{abstract}

\section{Introduction}\label{sec:intro}

In this paper, we consider the relaxed Peaceman-Rachford (PR) splitting method for solving the monotone inclusion
\beq \label{eq:mainincl}
0 \in (A+B)(u)
\eeq
where $A: \cX \tos \cX$ and $B: \cX \tos \cX$ are maximal $\beta$-strongly monotone (point-to-set) operators for some
$\beta \ge 0$
(with the convention that $0$-strongly monotone means simply monotone, and $\beta$-strongly monotone with $\beta > 0$
means strongly monotone in the usual sense).
Recall that the relaxed PR splitting method is given by
\beq \label{eq:seqqq}
x_{k} = x_{k-1} + \theta (J_{B}(2 J_{A}(x_{k-1}) - x_{k-1}) - J_{A}(x_{k-1})),
\eeq
where $\theta>0$ is a fixed relaxation parameter and $J_T :=(I+T)^{-1}$. The special case of the relaxed PR splitting method in which
 $\theta = 2$ is known as
the Peaceman-Rachford (PR) splitting method and the one with $\theta=1$ is the
widely-studied Douglas-Rachford (DR) splitting method.
Convergence results for them are studied for example
in \cite{Bauschke,Bauschke1,Bauschke2,Bauschke3,Combettes2,Eckstein,Facchinei,MR551319}.



The analysis of the relaxed PR splitting method for the case in which $\beta=0$ has been undertaken in a number of papers
which are discussed in this paragraph.
Convergence of the sequence of iterates generated by the  relaxed PR splitting method is well-known when $\theta<2$
(see for example \cite{Bauschke,Combettes1,Facchinei}) and, according to \cite{Giselsson}, its limiting behavior for the case in which $\theta \ge 2$ is not known.
{\color{black}{We actually show in Subsection \ref{subsection2} that
the sequence \eqref{eq:seqqq} does not necessarily converge when $\theta \geq 2$.}}
 An $\mathcal{O}(1/\sqrt{k})$ (strong) pointwise convergence rate result is established in \cite{He}  for the relaxed PR splitting
method when $\theta \in (0,2)$.
Moreover, when $A=\partial f$ and $B=\partial g$ where $f$ and $g$ are  proper lower semi-continuous convex functions,
papers \cite{Davis3,Davis,Davis2} derive 
strong pointwise (resp., ergodic)
convergence rate bounds for
the relaxed PR method  when $\theta \in (0,2)$ (resp., $\theta \in (0,2]$) under different assumptions on the functions.
Assuming only $\beta$-strong monotonicity of $A = \partial f$, where $\beta > 0$, some smoothness property on $f$, and maximal monotonicity of $B$,
\cite{Giselsson} shows that  the relaxed PR
splitting method has linear convergence rate for $\theta \in (0,2+\tau)$ for some $\tau>0$.
Linear rate of convergence of the relaxed PR splitting method and its two special cases, namely, the DR splitting and PR splitting methods,
are established in \cite{Bauschke1,Bauschke2,Bauschke3,Davis2,Giselsson2,Giselsson,MR551319} under relatively strong
assumptions on $A$ and/or $B$ (see also Table \ref{table1}).

 {\color{black}{This paper  assumes that $\beta \geq 0$, and hence its analysis applies to the case in which
both $A$ and $B$ are monotone ($\beta=0$) and the case in which both $A$ and $B$ are strongly monotone ($\beta > 0$).}}
This paragraph discusses papers dealing with the latter case.
Paper \cite{Dong}
establishes  convergence of the sequence generated by the relaxed PR splitting method for any
$\theta \in (0, 2 + \beta)$ and, under some strong assumptions on $A$ and $B$,
establishes its linear convergence rate.  We complement the convergence results in \cite{Dong} by showing that for
$\theta = 2 + \beta$, the sequence of iterates generated by the relaxed PR splitting method also converge,
and describe an instance showing its nonconvergence when $\theta \geq \min\{2 + 2\beta, 2 + \beta + 1/\beta \}$.
Moreover, we establish strong pointwise and ergodic convergence rate results
(Theorems \ref{thm:pointwise2} and \ref{thm:main}) 
for the relaxed PR splitting method when
$\theta \in (0, 2 + \beta)$ and $\theta \in (0, 2 + \beta]$, respectively.

Finally, by imposing strong assumptions requiring one of the operators to be strong monotone and
one of them to be Lipschitz (and hence point-to-point), \cite{Davis2,Giselsson2,Giselsson} establish linear convergence rate of the
relaxed PR splitting method.
As opposed to these papers, the assumptions in \cite{Dong} and this paper do not imply the
operators $A$ or $B$  to be point-to-point.

Our analysis of the relaxed PR splitting method for solving (\ref{eq:mainincl}) is based on viewing it as an inexact proximal point method, more
specifically, as an instance of a non-Euclidean hybrid proximal extragradient (HPE) framework for solving the monotone inclusion problem.  
The proximal point method, proposed by Rockafellar \cite{Rockafellar1}, is a classical iterative scheme for solving the latter problem.
Paper \cite{Solodov} introduces an Euclidean version of the HPE framework which is an inexact version of the proximal point method based
on a certain relative error criterion.  Iteration-complexities of the latter framework are established in \cite{Monteiro} (see also \cite{monteiro2011complexity}).
Generalizations of the HPE framework to the non-Euclidean setting are studied in \cite{Goncalves,Kolossoski,Solodov1}.
Applications of the HPE framework can be found for example in \cite{He3,He1,monteiro2011complexity,Monteiro}.


This paper is organized as follows. Section \ref{sec:bas} describes basic concepts and notation used in the paper.  Section \ref{sec:HPE} discusses
 the non-Euclidean HPE framework which is used to the study the convergence properties of the relaxed PR splitting method
in Sections  \ref{sec:PRmethod} and \ref{discussions}.
Section \ref{sec:PRmethod} derives convergence rate bounds for the relaxed Peaceman-Rachford (PR) splitting method.
{\color{black}{Section \ref{discussions}, which consists of two subsections,
discusses a convergence result of the relaxed PR splitting method in the first
subsection and provides an example showing that its iterates may not
converge when $\theta \geq \min\{2 + 2\beta, 2 + \beta + 1/\beta \}$ in the second subsection. 
Finally, Section \ref{numerical} discusses the
numerical performance of the relaxed PR splitting method for solving
 the weighted Lasso minimization problem.
Section \ref{conclusion} gives some concluding remarks.}}

\section{Basic concepts and notation}
\label{sec:bas}

This section presents some definitions, notation and terminology which will be used in the paper.


{\color{black}{We denote the set of real numbers by $\mathbb{R}$ and the set of non-negative real numbers by $\R_+$.}}
Let $f$ and $g$ be functions with the same domain and  whose values are in {\color{black}{$\R_+$}}.
We write that  $f(\cdot) = {\Omega} (g(\cdot))$ if there exists constant $K > 0$  such that $f(\cdot) \geq K g(\cdot)$.
Also, we write $f(\cdot) = {\Theta}(g(\cdot))$ if $f(\cdot) = {\Omega}(g(\cdot))$ and $g(\cdot) = {\Omega}(f(\cdot))$.

Let  $\cZ$ be a finite-dimensional
real vector space with
inner product denoted by $\inner{\cdot}{\cdot}$ (an example of $\cZ$ is $\R^n$ endowed with the standard inner product)
and let $\|\cdot\|$ denote an arbitrary
seminorm in $\cZ$.  Its dual (extended) seminorm, denoted by $\|\cdot\|_*$, is defined as
 $\|\cdot\|_*:=\sup\{ \inner{\cdot}{z}:\|z\|\leq 1\}$. {\color{black}{It is easy to see that
\beq \label{eq:Cau-Sc}
\inner {z}{v} \le \|z\| \|v\|_* \quad \forall z,v \in \cZ.
\eeq
}}
 
 The following straightforward  result states some basic properties of the dual seminorm associated with a matrix seminorm.
Its proof can be found for example in {\color{black}{Lemma A.1(b) of \cite{Sicre}.}}


\begin{proposition}\label{propdualnorm}
Let $A:\cZ\to \cZ$ be a self-adjoint positive semidefinite linear operator and consider
 the seminorm $\|\cdot\|$ in $\cZ$ given by $\|z\|= \langle A z, z\rangle ^{1/2}$ for every $z\in \cZ$. Then,
 $\dom  \|\cdot\|_*=\mbox{Im}\;(A)$ and $\| Az\|_*=\|z\|$ for every $z\in \cZ$.
\end{proposition}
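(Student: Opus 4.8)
The plan is to work with the symmetric bilinear form $b(z,w) := \inner{Az}{w}$, which is symmetric because $A$ is self-adjoint (so $\inner{Az}{w}=\inner{z}{Aw}=\inner{Aw}{z}$) and positive semidefinite because $b(z,z)=\inner{Az}{z}=\|z\|^2\ge 0$. The seminorm is then $\|z\|=b(z,z)^{1/2}$, and the Cauchy--Schwarz inequality for the semi-inner product $b$ gives
\[
|\inner{Az}{w}| \le b(z,z)^{1/2}\,b(w,w)^{1/2} = \|z\|\,\|w\| \qquad \forall z,w \in \cZ .
\]
A first consequence I would record at the outset is that $\|z\|=0$ forces $Az=0$: if $b(z,z)=0$, the displayed inequality yields $\inner{Az}{w}=0$ for every $w$, hence $Az=0$. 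Thus the seminorm and $A$ have the same kernel.

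Next I would prove the identity $\|Az\|_* = \|z\|$. For the upper bound, apply the inequality above with $w$ ranging over the unit seminorm ball: for any $w$ with $\|w\|\le 1$ one has $\inner{Az}{w}\le\|z\|$, so taking the supremum in the definition of the dual seminorm gives $\|Az\|_*\le\|z\|$. For the matching lower bound, when $\|z\|>0$ I would test that supremum at the admissible vector $w=z/\|z\|$, for which $\|w\|=1$ and $\inner{Az}{w}=\inner{Az}{z}/\|z\|=\|z\|^2/\|z\|=\|z\|$; hence $\|Az\|_*\ge\|z\|$ and equality holds. If instead $\|z\|=0$, then $Az=0$ by the previous paragraph, so $\|Az\|_*=\|0\|_*=0=\|z\|$, and the identity holds in all cases.

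Finally I would establish $\dom\|\cdot\|_*=\mbox{Im}\,(A)$. The inclusion $\mbox{Im}\,(A)\subseteq\dom\|\cdot\|_*$ is immediate from the just-proved identity, since $\|Az\|_*=\|z\|<\infty$ for every $z$. For the reverse inclusion, suppose $\|v\|_*<\infty$ and pick any $z_0\in\ker A$; then $\|z_0\|=0$, so $t z_0$ lies in the unit seminorm ball for every $t\in\R$, whence $t\,\inner{v}{z_0}=\inner{v}{t z_0}\le\|v\|_*$ for all $t$, which forces $\inner{v}{z_0}=0$. Thus $v\perp\ker A$, and since $A$ is self-adjoint we have $(\ker A)^\perp=\mbox{Im}\,(A)$, giving $v\in\mbox{Im}\,(A)$. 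The only points needing care are the degenerate case $\|z\|=0$ in the norm identity and the identification $(\ker A)^\perp=\mbox{Im}\,(A)$ via self-adjointness; neither is a serious obstacle, so the essential content is the two Cauchy--Schwarz estimates together with the optimal test direction $w=z/\|z\|$.
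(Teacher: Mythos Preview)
Your proof is correct. The paper does not actually prove this proposition; it merely refers the reader to Lemma~A.1(b) of an external reference, so there is no in-paper argument to compare against. Your approach---Cauchy--Schwarz for the positive semidefinite form $\inner{A\cdot}{\cdot}$ to obtain $\|Az\|_*\le\|z\|$, the test direction $w=z/\|z\|$ for equality, and the identification $(\ker A)^\perp=\mbox{Im}\,A$ via self-adjointness (in finite dimensions) to pin down the domain of the dual seminorm---is the standard elementary argument and is complete as written.
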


Given a set-valued operator $T:\cZ\tos \cZ$,
its domain is denoted by
 $\mbox{Dom}(T):=\{z\in \cZ : T(z)\neq \emptyset\}$
 and  its {inverse} operator $T^{-1}:\cZ\tos \cZ$ is given by
$T^{-1}(v):=\{z :  v\in T(z)\}$.  The graph of $T$ is defined by ${\rm{Gr}}(T) := \{(z,t) : t \in T(z) \}$.
The operator $T$ is said to be   monotone if
\[
\inner{z-z'}{t-t'}\geq 0\quad \forall (z,t), \, (z',t') \in {\rm{Gr}}(T).
\]
Moreover, $T$ is maximal monotone if it is monotone and,
additionally, if $T'$ is a monotone operator such that $T(z)\subset T'(z)$ for every $z\in \cZ$, then $T=T'$.
The sum $ T+T':\cZ\tos \cZ$ of two set-valued
operators $T,T':\cZ\tos \cZ$ is defined by
$(T+T')(z):= \{t+t' \in \cZ:t\in T(z),\; t'\in T'(z)\}$ for every $z\in \cZ$.
Given a scalar $\varepsilon\geq0$, the
 {$\varepsilon$-enlargement} $T^{[\varepsilon]}:\cZ\tos \cZ$
 of a monotone operator $T:\cZ\tos \cZ$ is defined as
\begin{align}
\label{eq:def.eps}
 T^{[\varepsilon]}(z)
 :=\{t\in \cZ : \inner{t-t'}{z-z'}\geq -\varepsilon,\;\forall z' \in \cZ, \, \forall t'\in T(z')\} \quad \forall z \in \cZ.
\end{align}

%
%


\section{A non-Euclidean hybrid proximal extragradient framework}\label{sec:HPE}

{\color{black}{This section discusses the non-Euclidean hybrid proximal extragradient (NE-HPE) framework and describes
its associated convergence and iteration complexity results.
The results of the section will be used in Sections \ref{sec:PRmethod} and \ref{discussions} to study the convergence
and iteration complexity properties of the relaxed PR splitting method \eqref{eq:seqqq}.
It contains two subsections. The first one describes a class of distance generating functions introduced in \cite{Goncalves}
and derives some of its basic properties. The second one describes the NE-HPE framework and its
corresponding convergence and iteration complexity results.}}

\subsection{A class of distance generating functions}

We start by introducing a class of distance generating functions (and its corresponding Bregman distances)
which is needed for the presentation of the NE-HPE framework in Subsection \ref{subsec:HPE}.

\begin{definition}\label{def:defw0}
For a given convex set {\color{black}{$Z\subset \cZ$, a seminorm $\| \cdot \|$ in $\cZ$}} and scalars $0 < m \le M$, we let $\mathcal{D}_Z(m,M)$ denote the class of real-valued
functions  $w$  which are differentiable on $Z$
and satisfy
\begin{align}\label{eq:strongly-dist}
w(z')-w(z) - \inner{\nabla w(z)}{z'-z} \geq \frac{{m}}{2}\|z-z'\|^2 \quad \forall  z,z' \in  Z,\\
\label{eq:a1}
\|\nabla w(z)-\nabla w(z')\|_* \leq {M}\|z-z'\| \quad \forall z, z' \in Z.
\end{align}
A function $w \in \mathcal{D}_Z(m,M)$ is referred to as a distance generating function with respect to the seminorm $\|\cdot\|$ and its associated
Bregman distance {\color{black}{$dw: Z \times Z \rightarrow \R$}} is defined as
\begin{equation}\label{def_d}
(dw)(z';z) = (dw)_{z}(z') := w(z')-w(z)-\langle \nabla w(z),z'-z\rangle  \quad \forall z, z' \in Z.
\end{equation}
\end{definition}

Throughout our presentation, we use the second notation $(dw)_{z}(z')$ instead of the first one $(dw)(z';z)$
although the latter one makes it clear that $(dw)$ is a function of two arguments, namely, $z$ and $z'$.
Clearly, it follows from \eqref{eq:strongly-dist} that $w$ is a convex function on $Z$ which is
in fact $m$-strongly convex on $Z$ whenever $\|\cdot\|$ is a norm.


The following simple result summarizes
the main identities about the Bregman distance $(dw)$.

\begin{lemma} \label{lm:basic-ident}
For some convex set $Z\subset \cZ$ and scalars $0 < m \le M$, let $w \in \mathcal{D}_Z(m,M)$ be given.
Then, the following identities hold for every $z,z' \in Z$:
\begin{align}
\nabla (dw)_{z}(z') &= - \nabla (dw)_{z'}(z) = \nabla w(z') - \nabla w(z), \label{grad-d} \\
\label{equacao_taylor}
(dw)_{v}(z') - (dw)_{v}(z) &= \langle \nabla (dw)_{v}(z), z'-z\rangle + (dw)_{z}(z'), \quad \forall v \in Z\\
\label{lipsc}
\frac{m}2 \|z-z'\|^2 &\le (d{w})_{z}(z')  \leq \frac{M}{2}\|z-z'\|^2, \\
\label{eq:789}
 \|\nabla(dw)_{z'}(z)\|_*^2 &\leq  \frac{2 M^2}{m} \min \{ (dw)_{z}(z') ,  (dw)_{z'}(z) \};
\end{align}
\end{lemma}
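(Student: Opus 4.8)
The plan is to establish the four identities in sequence, using the earlier ones to streamline the later ones. All four are consequences of the definition \eqref{def_d} together with the two defining inequalities \eqref{eq:strongly-dist} and \eqref{eq:a1}, so the work is mostly organizational.

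First I would prove \eqref{grad-d} by direct differentiation. Holding the base point $z$ fixed in \eqref{def_d} and differentiating in the free variable $z'$, the terms $-w(z)$ and $\langle \nabla w(z), z\rangle$ are constant, so $\nabla(dw)_z(z') = \nabla w(z') - \nabla w(z)$. Interchanging the roles of $z$ and $z'$ gives $\nabla(dw)_{z'}(z) = \nabla w(z) - \nabla w(z')$, which is the negative of the former, yielding the full chain of equalities. Next, for \eqref{equacao_taylor} I would expand both sides using \eqref{def_d} and substitute \eqref{grad-d}. The left side $(dw)_v(z') - (dw)_v(z)$ collapses to $w(z') - w(z) - \langle \nabla w(v), z'-z\rangle$ once the $w(v)$ terms cancel; on the right side, replacing $\nabla(dw)_v(z)$ by $\nabla w(z) - \nabla w(v)$ and adding $(dw)_z(z')$ causes the two $\langle \nabla w(z), z'-z\rangle$ contributions to cancel, leaving the same expression. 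This is a purely algebraic verification.

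The two-sided bound \eqref{lipsc} splits into its two halves. The lower bound is immediate, since the left-hand side of \eqref{eq:strongly-dist} is exactly $(dw)_z(z')$ by \eqref{def_d}. For the upper bound I would use the integral representation
\[
(dw)_z(z') = \int_0^1 \langle \nabla w(z + t(z'-z)) - \nabla w(z),\, z'-z\rangle\, dt,
\]
obtained from the fundamental theorem of calculus, and then bound the integrand via Cauchy--Schwarz \eqref{eq:Cau-Sc} and the Lipschitz condition \eqref{eq:a1}, which give $\langle \nabla w(z+t(z'-z)) - \nabla w(z), z'-z\rangle \le M t\|z'-z\|^2$; integrating over $t \in [0,1]$ produces the factor $1/2$ and hence $\frac{M}{2}\|z'-z\|^2$. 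This integral step is the one requiring the most care, since it is where the Lipschitz hypothesis \eqref{eq:a1} is brought in and where the seminorm estimate must be passed through the integral; it is, however, standard.

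Finally, for \eqref{eq:789} I would combine the pieces already in hand. By \eqref{grad-d}, $\nabla(dw)_{z'}(z) = \nabla w(z) - \nabla w(z')$, so \eqref{eq:a1} gives $\|\nabla(dw)_{z'}(z)\|_*^2 \le M^2\|z-z'\|^2$. Applying the lower bound of \eqref{lipsc} to each of $(dw)_z(z')$ and $(dw)_{z'}(z)$, and using $\|z-z'\| = \|z'-z\|$, yields $\|z-z'\|^2 \le (2/m)\min\{(dw)_z(z'),(dw)_{z'}(z)\}$; substituting this into the previous estimate gives the claimed inequality. I expect no genuine obstacle beyond the upper bound in \eqref{lipsc}; the remaining steps are bookkeeping with the definition and the already-established identities.
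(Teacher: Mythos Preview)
Your proposal is correct and follows essentially the same route as the paper: \eqref{grad-d} and \eqref{equacao_taylor} by direct computation from the definition, the lower bound in \eqref{lipsc} from \eqref{eq:strongly-dist}, the upper bound via the integral representation combined with \eqref{eq:Cau-Sc} and \eqref{eq:a1}, and \eqref{eq:789} by chaining \eqref{grad-d}, \eqref{eq:a1}, and the lower half of \eqref{lipsc}. Your write-up is in fact more detailed than the paper's, which dispatches each step in a sentence.
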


\proof
Identities \eqref{grad-d} and \eqref{equacao_taylor} follow straightforwardly from the definition of the Bregman distance in \eqref{def_d}.
{\color{black}{The first inequality in \eqref{lipsc} follows easily from (\ref{eq:strongly-dist}) and the definition of $(dw)_{z}(z')$ in (\ref{def_d}). The second inequality in (\ref{lipsc}) follows from \eqref{eq:Cau-Sc}, (\ref{eq:a1}),
the definition of $(dw)_{z}(z')$ in (\ref{def_d}),
and the identity
\begin{eqnarray*}
w(z^\prime) - w(z) = \int_0^1 \langle \nabla w(z + t(z^\prime - z)), z^\prime - z \rangle dt \quad z, z^\prime \in Z.
\end{eqnarray*}
 It is easy to see that \eqref{eq:789} immediately follows from
\eqref{eq:a1}, \eqref{grad-d} and \eqref{lipsc}.
}}
\endproof

Note that if the seminorm in Definition \ref{def:defw0} is a norm, then \eqref{eq:strongly-dist}  implies that
$w$ is strongly convex on $Z$, in which case the corresponding $dw$ is said to be nondegenerate on $Z$.
However, since Definition \ref{def:defw0} does not necessarily assume that $\|\cdot\|$ is a norm, it admits the possibility of
$w$ being not strongly convex on $Z$, or equivalently, $dw$ being degenerate on $Z$.


The following result gives some useful properties of distance generating functions.

\begin{lemma}\label{basicassu}
For some convex set $Z\subset \cZ$ and scalars $0 < m \le M$, let $w \in \mathcal{D}_Z(m,M)$ be given.
Then,
for every $l \ge 1$ and  $z_0,z_1,\ldots,z_l \in Z$, we have
\begin{equation}\label{eq:56}
(dw)_{z_0}(z_l) \le \frac{lM}{m} \sum_{i=1}^l \min \{ (dw)_{z_{i-1}}(z_i) ,  (dw)_{z_i}(z_{i-1}) \}.
\end{equation}
\end{lemma}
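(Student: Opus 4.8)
The plan is to reduce the whole statement to the two-sided bound \eqref{lipsc}, the triangle inequality for the seminorm $\|\cdot\|$, and the elementary Cauchy--Schwarz inequality for finite sequences of reals. To lighten notation, set $D_i := \min \{ (dw)_{z_{i-1}}(z_i) ,  (dw)_{z_i}(z_{i-1}) \}$ for $i = 1, \ldots, l$, so that the right-hand side of \eqref{eq:56} is $(lM/m)\sum_{i=1}^l D_i$, and the goal is to bound $(dw)_{z_0}(z_l)$ by this quantity.

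First I would bound the left-hand side from above using the second inequality in \eqref{lipsc} with $z = z_0$ and $z' = z_l$, obtaining $(dw)_{z_0}(z_l) \le (M/2)\,\|z_0 - z_l\|^2$. Since $z_0 - z_l = \sum_{i=1}^l (z_{i-1} - z_i)$ telescopes, the triangle inequality for the seminorm gives $\|z_0 - z_l\| \le \sum_{i=1}^l \|z_{i-1} - z_i\|$, and then Cauchy--Schwarz (applied to the $l$ nonnegative reals $\|z_{i-1}-z_i\|$) yields $\|z_0 - z_l\|^2 \le l \sum_{i=1}^l \|z_{i-1} - z_i\|^2$.

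The key remaining step is to control each term $\|z_{i-1} - z_i\|^2$ by $D_i$. Here I would invoke the first inequality in \eqref{lipsc}, noting that it applies to both orderings of the pair $(z_{i-1}, z_i)$: since $\|z_{i-1} - z_i\| = \|z_i - z_{i-1}\|$, we get both $\frac{m}{2}\|z_{i-1}-z_i\|^2 \le (dw)_{z_{i-1}}(z_i)$ and $\frac{m}{2}\|z_{i-1}-z_i\|^2 \le (dw)_{z_i}(z_{i-1})$, hence $\frac{m}{2}\|z_{i-1}-z_i\|^2 \le D_i$, that is, $\|z_{i-1}-z_i\|^2 \le (2/m)\,D_i$. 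Substituting into the chain above gives $(dw)_{z_0}(z_l) \le (M/2)\cdot l \cdot \sum_{i=1}^l (2/m)\,D_i = (lM/m)\sum_{i=1}^l D_i$, which is precisely \eqref{eq:56}.

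I anticipate no real obstacle: the argument is a clean concatenation of the Lipschitz-type bounds \eqref{lipsc} with two standard inequalities. The only points requiring a moment's care are that $\|\cdot\|$ is merely a seminorm, so that I rely on its triangle inequality and homogeneity rather than on any nondegeneracy (both of which are part of the definition of a seminorm), and that the lower bound must be applied to the minimum $D_i$ rather than to a single Bregman distance---which works precisely because the seminorm is symmetric and both orderings of the pair satisfy the same lower bound.
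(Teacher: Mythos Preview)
Your proof is correct and follows essentially the same route as the paper: upper-bound $(dw)_{z_0}(z_l)$ by $(M/2)\|z_l-z_0\|^2$ via \eqref{lipsc}, apply the triangle inequality and the Cauchy--Schwarz/$\ell_1$-$\ell_2$ bound $(\sum_i \|z_i - z_{i-1}\|)^2 \le l \sum_i \|z_i - z_{i-1}\|^2$, and then invoke the lower bound in \eqref{lipsc} (in both orderings, to get the minimum). Your write-up is in fact slightly more explicit than the paper's about why the minimum appears.
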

\proof
By \eqref{lipsc}, the triangle inequality for norms and the fact that
the $1$-norm of an $l$-vector is bounded by $\sqrt{l}$ times its $2$-norm, we have
\begin{align*}
(dw)_{z_0}(z_l) &\le \frac{M}2 \|z_l-z_0\|^2 \le \frac{M}2 \left ( \sum_{i=1}^l \|z_i - z_{i-1}\| \right)^2
\le \frac{l M}2 \sum_{i=1}^l \|z_i - z_{i-1}\|^2 
\end{align*}
which clearly implies \eqref{eq:56} due to the first inequality in \eqref{lipsc}.
\endproof

\subsection{The NE-HPE framework}
\label{subsec:HPE}

 This subsection describes the NE-HPE framework and its
corresponding convergence and iteration complexity results.

Throughout this subsection, we assume that scalars $0<m \le M$, convex set $Z \subset \Z$, seminorm $\|\cdot\|$  and
distance generating function $w \in   \mathcal{D}_Z(m,M)$ with respect to $\|\cdot\|$ are given.
Our problem of interest in this section is the MIP
\begin{align}\label{eq:inc.p}
 0\in T(z)
\end{align}
where
$T:\cZ\tos \cZ$ is a maximal monotone operator satisfying the following conditions:
\begin{itemize}
\item[\bf A0)] $\Dom(T)\subset Z$;
\item[\bf A1)]  the solution set $T^{-1}(0)$ of~\eqref{eq:inc.p} is nonempty.
\end{itemize}

%

We now state a non-Euclidean HPE (NE-HPE) framework  for solving the MIP \eqref{eq:inc.p} which generalizes
its Euclidean counterparts studied in the literature (see for example in  \cite{Monteiro,Monteiro1,Solodov}).

\vgap
\vgap

\noindent
\fbox{
\begin{minipage}[h]{6.4 in}
{\bf Framework~1} { (An NE-HPE framework for solving \eqref{eq:inc.p})}.
\begin{itemize}
\item[(0)] Let $z_0 \in Z$ and  $\sigma \in [0, 1]$ be given, and set $k=1$;
\item[(1)] choose $\lambda_k>0$ and find $(\tilde{z}_k, z_k, \varepsilon_k) \in Z \times Z \times \mathbb{R}_{+}$   such that
       \begin{align}
& r_k:= \frac{1}\lambda_k \nabla (dw)_{z_k}(z_{k-1})  \in T^{[\varepsilon_k]}(\tz_k), \label{breg-subpro} \\
& (dw)_{z_k}({\tz}_k) + \lambda_k\varepsilon_k \leq \sigma (dw)_{z_{k-1}}({\tz}_k); \label{breg-cond1}
\end{align}

\item[(2)] set $k\leftarrow k+1$ and go to step 1.
\end{itemize}
\noindent
{\bf end}
\end{minipage}
}

\vgap
\vgap

We now make some remarks about Framework~1. First, it does not specify
how to find $\lambda_k$ and $(\tilde{z}_k, z_k, \varepsilon_k)$ satisfying (\ref{breg-subpro}) and (\ref{breg-cond1}).
The particular 
scheme for computing $\lambda_k$ and $(\tilde{z}_k, z_k, \varepsilon_k)$ will depend on the instance of the framework under consideration
and the properties of the operator $T$.
Second, if $w$ is strongly convex on $Z$ and $\sigma= 0$, then (\ref{breg-cond1}) implies that $\varepsilon_k= 0$
and $z_k = \tilde z_k$ for every~$k$, and
hence that $r_k \in T(z_k)$ in view of \eqref{breg-subpro}.
Therefore, the HPE error conditions \eqref{breg-subpro}-\eqref{breg-cond1} can be viewed as a relaxation of an iteration of the exact non-Euclidean proximal point method,
namely,
\[
0 \in \frac{1}{\lambda_k} \nabla (dw)_{z_{k-1}}(z_{k})  +  T({z}_k).
\]

We observe that NE-HPE frameworks have already been
studied in \cite{Goncalves}, \cite{Kolossoski} and \cite{Solodov1}.
The approach presented in this section differs from these three papers as follows.
Assuming that $Z$ is an open convex set, $w$ is continuously differentiable on $Z$ and continuous on its closure,
\cite{Solodov1} studies a special case of the NE-HPE framework in which $\varepsilon_k=0$ for every $k$, and
presents results on convergence of sequences rather than iteration complexity.
Paper \cite{Kolossoski} deals with distance generating functions $w$ 
which do not necessarily satisfy conditions \eqref{eq:strongly-dist} and \eqref{eq:a1}, and as consequence,
obtains results which are more
limited in scope, i.e., only an ergodic convergence rate result is obtained for
operators with bounded feasible domains (or, more generally, for the case in which
the sequence generated by the HPE framwework is bounded).
Paper \cite{Goncalves} introduces the class of distance generating functions
$\mathcal{D}_Z(m,M)$ but only analyzes the behavior of a HPE framework
for solving inclusions whose operators are strongly monotone with respect to a fixed $w \in \mathcal{D}_Z(m,M)$  (see condition A1 in Section 2 of \cite{Goncalves}).
This section on the other hand assumes that $w \in \mathcal{D}_Z(m,M)$ but it does assume any strong monotonicity of
$T$ with respect to $w$.

Before presenting the main results about the the NE-HPE framework,
namely, Theorems \ref{th:alpha} and \ref{a2947} establishing its pointwise and ergodic iteration complexities, respectively,
and Propositions  \ref{lm:convergence} and \ref{prop:convergence} showing that $\{z_k\}$ and/or $\{\tz_k\}$ approach $T^{-1}(0)$ in terms
of the Bregman distance $(dw)$,  we first establish a few preliminary technical results.

\begin{lemma}\label{lema_desigualdades}
For every $k \geq 1$ and $z \in Z$, we have:
\begin{align}
(dw)_{z_{k-1}}(z) - (dw)_{z_k} (z) &= (dw)_{z_{k-1}}(\tilde{z}_k) - (dw)_{z_k} (\tilde{z}_k) + \lambda_k \langle r_k, \tilde{z}_k - z \rangle; \label{eq:des1} \\
(dw)_{z_{k-1}}(z) - (dw)_{z_k}(z) & \ge (1 - \sigma)(dw)_{z_{k-1}}(\tilde{z}_k) + \lambda_k (\langle r_k, \tilde{z}_k - z\rangle + \varepsilon_k); \label{eq:des2}  \\
(dw)_{z_{0}}(z) - (dw)_{z_k}(z) &\geq (1 - \sigma)\sum_{i=1}^k (dw)_{z_{i-1}}(\tilde{z}_i) +
\sum_{i=1}^k\lambda_i [ \langle r_i, \tilde{z}_i - z\rangle + \varepsilon_i ]. \label{eq:des3}
\end{align}
\end{lemma}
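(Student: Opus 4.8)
The plan is to establish \eqref{eq:des1} first by a direct algebraic manipulation of the Bregman distances, then derive \eqref{eq:des2} from \eqref{eq:des1} using the HPE error conditions \eqref{breg-subpro}--\eqref{breg-cond1}, and finally obtain \eqref{eq:des3} by telescoping \eqref{eq:des2} over $i = 1, \ldots, k$.

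For \eqref{eq:des1}, the key is the three-point identity \eqref{equacao_taylor} from Lemma \ref{lm:basic-ident}. First I would apply \eqref{equacao_taylor} twice with the base point $v$ taken to be $z$: once to expand $(dw)_z(\tz_k) - (dw)_z(z_{k-1})$ in terms of $\langle \nabla(dw)_z(z_{k-1}), \tz_k - z_{k-1}\rangle$ and $(dw)_{z_{k-1}}(\tz_k)$, and once analogously with $z_k$ in place of $z_{k-1}$. Equivalently, and perhaps more cleanly, I would expand the left-hand side $(dw)_{z_{k-1}}(z) - (dw)_{z_k}(z)$ directly from the definition \eqref{def_d} and use the gradient identity \eqref{grad-d} to recognize $\nabla(dw)_{z_k}(z_{k-1}) = \nabla w(z_{k-1}) - \nabla w(z_k)$. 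Substituting $r_k = \lambda_k^{-1}\nabla(dw)_{z_k}(z_{k-1})$ from \eqref{breg-subpro} then produces the term $\lambda_k\langle r_k, \tz_k - z\rangle$, with the remaining terms collapsing to $(dw)_{z_{k-1}}(\tz_k) - (dw)_{z_k}(\tz_k)$. This is a routine but slightly fiddly bookkeeping exercise in which one must keep careful track of which base point each $\nabla w$ is evaluated at; the three-point identity is precisely the tool that makes the cancellations transparent.

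For \eqref{eq:des2}, I would start from \eqref{eq:des1} and bound the term $(dw)_{z_{k-1}}(\tz_k) - (dw)_{z_k}(\tz_k)$ from below. The error condition \eqref{breg-cond1} gives $(dw)_{z_k}(\tz_k) + \lambda_k \varepsilon_k \le \sigma(dw)_{z_{k-1}}(\tz_k)$, hence $-(dw)_{z_k}(\tz_k) \ge \lambda_k\varepsilon_k - \sigma(dw)_{z_{k-1}}(\tz_k)$. Adding $(dw)_{z_{k-1}}(\tz_k)$ to both sides yields $(dw)_{z_{k-1}}(\tz_k) - (dw)_{z_k}(\tz_k) \ge (1-\sigma)(dw)_{z_{k-1}}(\tz_k) + \lambda_k\varepsilon_k$. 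Substituting this lower bound into \eqref{eq:des1} gives \eqref{eq:des2} immediately.

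Finally, \eqref{eq:des3} follows by summing \eqref{eq:des2} over $i = 1, \ldots, k$; the left-hand sides telescope to $(dw)_{z_0}(z) - (dw)_{z_k}(z)$, while the right-hand side accumulates the stated sum. I expect no genuine obstacle here: the only point demanding care is the base-point bookkeeping in step \eqref{eq:des1}, since all three statements are essentially consequences of the Bregman three-point identity combined with the defining inequalities of Framework~1.
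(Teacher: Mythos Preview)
Your proposal is correct and essentially matches the paper's proof: the paper also derives \eqref{eq:des1} by applying the three-point identity \eqref{equacao_taylor} twice (with base point $v=z_{k-1}$ rather than $v=z$, a cosmetic difference) and then substituting the definition of $r_k$, obtains \eqref{eq:des2} directly from \eqref{breg-cond1}, and gets \eqref{eq:des3} by summing \eqref{eq:des2} over $i=1,\ldots,k$.
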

\proof
Using (\ref{equacao_taylor}) twice and the definition of $r_k$ in (\ref{breg-subpro}), we conclude that
\begin{align*}
 (dw)_{z_{k-1}}(z) - (dw)_{z_k}(z) &= (dw)_{z_{k-1}}(z_k) + \langle \nabla (dw)_{z_{k-1}}(z_k), z - z_k\rangle \\
& = (dw)_{z_{k-1}}(z_k) + \langle \nabla (dw)_{z_{k-1}}(z_k), \tilde{z}_k - z_k\rangle + \langle \nabla (dw)_{z_{k-1}}(z_k), z - \tilde{z}_k\rangle \\
& = (dw)_{z_{k-1}}(\tilde{z}_k) - (dw)_{z_k}(\tilde{z}_k) + \langle \nabla (dw)_{z_{k-1}}(z_k), z - \tilde{z}_k\rangle \\
& = (dw)_{z_{k-1}}(\tilde{z}_k) - (dw)_{z_k}(\tilde{z}_k) + \lambda_k\langle r_k, \tilde{z}_k - z\rangle,
\end{align*}
and hence that \eqref{eq:des1} holds.
Inequality \eqref{eq:des2} follows immediately from \eqref{eq:des1} and (\ref{breg-cond1}).
Moreover, \eqref{eq:des3} follows by adding \eqref{eq:des2} from $k=1$ to $k=k$.
%
\endproof

\begin{proposition}\label{proposition_desigualdades}
For every $k \geq 1$ and $z^* \in T^{-1}(0)$, we have
\begin{eqnarray}\label{eq:monotone}
(dw)_{z_{k-1}}(z^*) - (dw)_{z_k}(z^*) - (1 - \sigma)(dw)_{z_{k-1}}(\tilde{z}_k)
\ge \lambda_k \left[ \langle r_k, \tilde{z}_k - z^\ast \rangle + \varepsilon_k  \right]\geq 0.
\end{eqnarray}
As a consequence,
the following statements hold:
\begin{itemize}
\item[(a)]
$\{(dw)_{z_k}(z^*)\}$ is non-increasing;
\item[(b)]
$\lim_{k\to \infty} \lam_k \left[\langle r_k, \tilde{z}_k - z^\ast \rangle + \varepsilon_k \right] = 0$;
\item[(c)]
$ (1-\sigma) \sum_{i=1}^k (dw)_{z_{i-1}}(\tz_i) \le (dw)_{z_0}(z^*)$.
\end{itemize}
%
\end{proposition}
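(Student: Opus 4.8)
The plan is to first establish the central chain of inequalities \eqref{eq:monotone} and then read off (a)--(c) as easy consequences. For the first inequality in \eqref{eq:monotone}, I would simply invoke \eqref{eq:des2} of Lemma \ref{lema_desigualdades} with the choice $z = z^*$ and rearrange: moving the term $(1-\sigma)(dw)_{z_{k-1}}(\tilde{z}_k)$ to the left-hand side yields exactly
\[
(dw)_{z_{k-1}}(z^*) - (dw)_{z_k}(z^*) - (1-\sigma)(dw)_{z_{k-1}}(\tilde{z}_k) \ge \lambda_k[\langle r_k, \tilde{z}_k - z^*\rangle + \varepsilon_k],
\]
so no new work is needed there.

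The only genuinely new ingredient is the final inequality, namely the nonnegativity of $\lambda_k[\langle r_k, \tilde{z}_k - z^*\rangle + \varepsilon_k]$, and this is where the hypothesis $z^* \in T^{-1}(0)$ enters. Since $\lambda_k > 0$, it suffices to show $\langle r_k, \tilde{z}_k - z^*\rangle + \varepsilon_k \ge 0$. By \eqref{breg-subpro} we have $r_k \in T^{[\varepsilon_k]}(\tilde{z}_k)$, while $0 \in T(z^*)$ means $(z^*,0) \in \mathrm{Gr}(T)$. Feeding the pair $(z',t') = (z^*,0)$ into the definition \eqref{eq:def.eps} of the $\varepsilon$-enlargement gives precisely $\langle r_k - 0, \tilde{z}_k - z^*\rangle \ge -\varepsilon_k$, which is the desired inequality. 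This short step---remembering to test the enlargement against the solution point---is the crux of the argument; everything else is bookkeeping.

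With \eqref{eq:monotone} in hand, the three consequences are routine. For (a), both $(1-\sigma)(dw)_{z_{k-1}}(\tilde{z}_k)$ and $\lambda_k[\langle r_k, \tilde{z}_k - z^*\rangle + \varepsilon_k]$ are nonnegative---the former because $\sigma \in [0,1]$ and Bregman distances are nonnegative by \eqref{lipsc}, the latter by the chain just proved---so \eqref{eq:monotone} forces $(dw)_{z_{k-1}}(z^*) \ge (dw)_{z_k}(z^*)$. For (b), the nonincreasing sequence $\{(dw)_{z_k}(z^*)\}$ is bounded below by $0$, hence convergent, so its consecutive differences tend to $0$; since $0 \le \lambda_k[\langle r_k, \tilde{z}_k - z^*\rangle + \varepsilon_k] \le (dw)_{z_{k-1}}(z^*) - (dw)_{z_k}(z^*)$ by \eqref{eq:monotone}, a squeeze gives the limit $0$. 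For (c), I would apply \eqref{eq:des3} with $z = z^*$ and discard the manifestly nonnegative terms $\sum_{i=1}^k \lambda_i[\langle r_i, \tilde{z}_i - z^*\rangle + \varepsilon_i]$ and $(dw)_{z_k}(z^*)$, leaving $(dw)_{z_0}(z^*) \ge (1-\sigma)\sum_{i=1}^k (dw)_{z_{i-1}}(\tilde{z}_i)$. There is essentially no obstacle beyond the single enlargement step; after that the proof is pure telescoping, monotonicity, and sign-checking.
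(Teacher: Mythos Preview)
Your proof is correct and mirrors the paper's own argument almost exactly: the first inequality via \eqref{eq:des2} at $z=z^*$, the second via the definition of the $\varepsilon$-enlargement applied to $r_k \in T^{[\varepsilon_k]}(\tilde z_k)$ and $0\in T(z^*)$, and (a)--(c) as immediate consequences. The only cosmetic difference is that for (c) the paper sums \eqref{eq:monotone} directly over $i=1,\dots,k$ rather than invoking the pre-summed \eqref{eq:des3}, but these are equivalent.
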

\proof
Let $z^* \in T^{-1}(0)$ be given. The first inequality in \eqref{eq:monotone} follows from \eqref{eq:des2} with $z=z^*$ and
the last inequality in \eqref{eq:monotone} follows from the fact that $0 \in T(z^*)$ and $r_k \in T^{[\varepsilon_k]}(\tz_k)$,
and the definition of $T^{[\varepsilon]}(\cdot)$.
Finally, statements (a) and (b) follow immediately from \eqref{eq:monotone} while (c) follows by adding \eqref{eq:monotone} over $i=1,\ldots,k$
and using the fact that
$(dw)_{z_k}(z^*) \ge 0$ for every $k$.
\endproof

For the purpose of stating the convergence rate results below, define
\beq \label{eq:dw0-def}
(dw)_0 := \inf \{ (dw)_{z_0}(z^*) : z^* \in T^{-1}(0)\}.
\eeq

\begin{lemma} \label{lm:breg.bas1}
For every $i \ge 1$, define
\beq \label{eq:breg-tau-1}
\theta_i := \max \left \{ \frac{\lambda_i^{2} \|r_i\|_* ^2}{\tau^2 (1+ \sqrt{\sigma})^2} ,
\frac{\lambda_i^{} \varepsilon_i}{\sigma} \right\} \ \ \mbox{ where} \ \ \tau := \frac{\sqrt{2}M}{\sqrt{m}}.
\eeq
Then, $(1-\sigma) \sum_{i=1}^k \theta_i \le dw_0$.
\end{lemma}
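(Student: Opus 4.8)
The plan is to reduce everything to the single pointwise estimate
\[
\theta_i \le (dw)_{z_{i-1}}(\tz_i) \qquad \mbox{for every } i \ge 1,
\]
after which summation and Proposition~\ref{proposition_desigualdades}(c) finish the argument at once. Granting this estimate, I would sum over $i=1,\dots,k$, multiply by $1-\sigma \ge 0$, and invoke part (c) to obtain $(1-\sigma)\sum_{i=1}^k\theta_i \le (1-\sigma)\sum_{i=1}^k (dw)_{z_{i-1}}(\tz_i) \le (dw)_{z_0}(z^*)$ for every $z^* \in T^{-1}(0)$; taking the infimum over such $z^*$ and recalling the definition \eqref{eq:dw0-def} yields the claim. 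Since $\theta_i$ is a maximum of two quantities, it suffices to bound each of them by $(dw)_{z_{i-1}}(\tz_i)$.

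The term $\lambda_i\varepsilon_i/\sigma$ is the easy one: the HPE error condition \eqref{breg-cond1}, together with the nonnegativity of $(dw)_{z_i}(\tz_i)$, gives $\lambda_i\varepsilon_i \le \sigma(dw)_{z_{i-1}}(\tz_i)$ directly, hence $\lambda_i\varepsilon_i/\sigma \le (dw)_{z_{i-1}}(\tz_i)$.

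The term involving $\|r_i\|_*$ is where the real work lies, and I expect it to be the main obstacle, since the factor $(1+\sqrt{\sigma})^2$ must be produced exactly. First I would rewrite $\lambda_i r_i = \nabla(dw)_{z_i}(z_{i-1}) = \nabla w(z_{i-1}) - \nabla w(z_i)$ using the definition of $r_i$ in \eqref{breg-subpro} and the gradient identity \eqref{grad-d}, so that the Lipschitz bound \eqref{eq:a1} gives $\|\lambda_i r_i\|_* \le M\|z_{i-1}-z_i\|$. Then I would split $\|z_{i-1}-z_i\| \le \|z_{i-1}-\tz_i\| + \|\tz_i - z_i\|$ by the triangle inequality for the seminorm and estimate each piece through the lower bound in \eqref{lipsc}: the first piece gives $\|z_{i-1}-\tz_i\| \le \sqrt{\frac{2}{m}(dw)_{z_{i-1}}(\tz_i)}$, while for the second, the HPE condition \eqref{breg-cond1} (dropping $\lambda_i\varepsilon_i \ge 0$) yields $(dw)_{z_i}(\tz_i) \le \sigma(dw)_{z_{i-1}}(\tz_i)$, hence $\|\tz_i - z_i\| \le \sqrt{\sigma}\,\sqrt{\frac{2}{m}(dw)_{z_{i-1}}(\tz_i)}$. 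This is precisely where the $\sqrt{\sigma}$, and therefore the $(1+\sqrt{\sigma})$, enters. Combining the two pieces gives $\|\lambda_i r_i\|_* \le M(1+\sqrt{\sigma})\sqrt{\frac{2}{m}(dw)_{z_{i-1}}(\tz_i)}$, and squaring, together with $\tau^2 = 2M^2/m$, produces $\lambda_i^2\|r_i\|_*^2 \le \tau^2(1+\sqrt{\sigma})^2(dw)_{z_{i-1}}(\tz_i)$, i.e.\ the desired bound on the first argument of the maximum. With both terms controlled, $\theta_i \le (dw)_{z_{i-1}}(\tz_i)$ holds, and the argument concludes as in the first paragraph.
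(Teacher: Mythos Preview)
Your proposal is correct and follows essentially the same route as the paper: establish $\theta_i \le (dw)_{z_{i-1}}(\tz_i)$ by handling the two arguments of the max separately, then sum and apply Proposition~\ref{proposition_desigualdades}(c) together with the definition of $(dw)_0$. The only cosmetic difference is that the paper bounds $\lambda_i\|r_i\|_*$ by first writing $\nabla(dw)_{z_i}(z_{i-1}) = \nabla(dw)_{z_{i-1}}(\tz_i) - \nabla(dw)_{z_i}(\tz_i)$, applying the triangle inequality in the dual seminorm, and then invoking \eqref{eq:789}, whereas you pass through the primal seminorm via \eqref{eq:a1} and the lower bound in \eqref{lipsc}; these are the same ingredients in a different order and yield the identical estimate.
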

\begin{proof}
For every $i \ge 1$, it follows from \eqref{breg-subpro}, \eqref{grad-d}, \eqref{eq:789},
\eqref{breg-cond1}, the triangle inequality for norms and the above definition of $\tau$, that
\begin{align*}
\lambda_i \|r_i\|_* &= \| \nabla (dw)_{z_{i}}(z_{i-1}) \|_* =  \|\nabla (dw)_{z_{i-1}}(\tz_i) - \nabla (dw)_{z_{i}}(\tz_i) \|_* \\
&\le \|\nabla (dw)_{z_{i-1}}(\tz_i)\|_* + \|\nabla (dw)_{z_{i}}(\tz_i) \|_* \le \tau \left[ (dw)_{z_{i-1}}(\tz_i)^{1/2} + (dw)_{z_{i}}(\tz_i)^{1/2} \right] \\
& \le
\tau ( 1 + \sqrt{\sigma} ) (dw)_{z_{i-1}}(\tz_i)^{1/2}.
\end{align*}
The last inequality, \eqref{breg-cond1} and the definition of $\theta_i$ then imply that
$\theta_i \le (dw)_{z_{i-1}}(\tz_i)$ for every $ i \ge 1$.
Hence, if $z^* \in T^{-1}(0)$, it follows that
\[
(1-\sigma) \sum_{i=1}^k \theta_i \le (1-\sigma) \sum_{i=1}^k (dw)_{z_{i-1}}(\tz_i) 
\le (dw)_{z_0}(z^*)
\]
where  the last inequality follows from Proposition  \ref{proposition_desigualdades}(c).
The lemma now follows from the latter relation and the definition of $(dw)_0$ in \eqref{eq:dw0-def}.
\end{proof}

\begin{lemma}
  \label{lm:breg.alpha-1}
Let $(dw)_0$ be as in \eqref{eq:dw0-def} and $\tau$ be as in \eqref{eq:breg-tau-1}, and assume that $\sigma<1$.
Then, for every
  $\alpha\in \R$ and every $k \ge 1$, there exists an $i\leq k$ such that
  \begin{equation}
    \label{v_ieps_i-bound-a-1}
    \norm{r_i}_* \leq \tau(1+\sqrt{\sigma})
    \sqrt{\frac{(dw)_0}{1-\sigma}
    \, \left( \frac{\lambda_i^{\alpha-2}}
   { \sum_{j=1}^k \lambda_j^\alpha}\right)} ,
    \quad \quad \quad
    \varepsilon_i\leq
     \frac{ \sigma (dw)_0} {1-\sigma}
    \left( \frac{\lambda_i^{\alpha-1}}{\sum_{j=1}^k \lambda_j^\alpha}
    \right).
\end{equation}
\end{lemma}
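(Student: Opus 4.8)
The plan is to recognize that the two estimates in \eqref{v_ieps_i-bound-a-1} are nothing more than an unpacking of a single scalar bound on the quantity $\theta_i$ defined in \eqref{eq:breg-tau-1}, and then to extract the required index by a weighted-averaging (pigeonhole) argument built on Lemma \ref{lm:breg.bas1}.

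First I would show that, for a fixed $i\le k$, the single inequality
\[
\theta_i \;\le\; \frac{(dw)_0}{1-\sigma}\,\frac{\lambda_i^{\alpha}}{\sum_{j=1}^k \lambda_j^\alpha}
\]
is equivalent to the conjunction of the two desired bounds. Indeed, since $\theta_i$ is by definition the maximum of $\lambda_i^2\norm{r_i}_*^2/(\tau^2(1+\sqrt{\sigma})^2)$ and $\lambda_i\varepsilon_i/\sigma$, the displayed bound forces each of these two terms to be at most the right-hand side. Bounding the first term, multiplying by $\tau^2(1+\sqrt{\sigma})^2/\lambda_i^2$ and taking square roots reproduces exactly the stated estimate for $\norm{r_i}_*$ (the exponent dropping from $\alpha$ to $\alpha-2$), while bounding the second term and multiplying by $\lambda_i/\sigma$ reproduces exactly the estimate for $\varepsilon_i$ (the exponent dropping to $\alpha-1$). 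Hence it suffices to produce some index $i\le k$ satisfying the single $\theta_i$-bound above.

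Next I would argue by contradiction using Lemma \ref{lm:breg.bas1}. Suppose no such index exists, so that the reverse strict inequality holds for every $i\le k$. Since each $\lambda_j>0$, the weights $\lambda_i^\alpha/\sum_{j=1}^k\lambda_j^\alpha$ are positive and sum to $1$ over $i=1,\dots,k$ for any real $\alpha$; summing the strict inequalities over $i$ therefore gives $\sum_{i=1}^k \theta_i > (dw)_0/(1-\sigma)$, that is, $(1-\sigma)\sum_{i=1}^k\theta_i > (dw)_0$. This contradicts the conclusion $(1-\sigma)\sum_{i=1}^k\theta_i \le (dw)_0$ of Lemma \ref{lm:breg.bas1} (here the hypothesis $\sigma<1$ is what makes the factor $1-\sigma$ positive and the division legitimate). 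The contradiction yields the required $i$, completing the proof. There is essentially no hard step here: the pigeonhole content has been front-loaded into the definition of $\theta_i$ and into the summed bound of Lemma \ref{lm:breg.bas1}, so the only point demanding care is the routine algebraic bookkeeping that splits the $\theta_i$-bound cleanly into the two displayed estimates, tracking the factor $\lambda_i$ and the exponents $\alpha-2$ and $\alpha-1$ correctly.
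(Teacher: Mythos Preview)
Your proposal is correct and follows essentially the same route as the paper: both derive from Lemma~\ref{lm:breg.bas1} the inequality $\sum_{i=1}^k\theta_i\le (dw)_0/(1-\sigma)$ and then use a pigeonhole/minimum argument to find an index $i$ with $\theta_i\le \lambda_i^\alpha(dw)_0/\big((1-\sigma)\sum_j\lambda_j^\alpha\big)$, after which the two displayed bounds drop out from the definition of $\theta_i$. The only slip is a harmless typo---to isolate $\varepsilon_i$ from $\lambda_i\varepsilon_i/\sigma$ you multiply by $\sigma/\lambda_i$, not $\lambda_i/\sigma$---but your stated conclusion (exponent $\alpha-1$) is the correct one.
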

\begin{proof}
It follows from Lemma \ref{lm:breg.bas1} that
\[
\frac{ (dw)_0 }{1-\sigma} \ge  \sum_{i=1}^k \theta_i  =
\sum_{i=1}^k \frac {\theta_i}{\lambda_i^{\alpha}}  \lambda_i^\alpha
\ge \left( \min_{i=1,\ldots,k}  \frac {\theta_i}{\lambda_i^{\alpha}} \right)
\left( \sum_{i=1}^k \lambda_i^\alpha \right)
\]
which, in view of the definition of $\theta_i$ in \eqref{eq:breg-tau-1},
can be easily seen to be equivalent to the conclusion of the lemma.
\end{proof}

The following pointwise convergence rate result describes the convergence rate of the sequence $\{(r_k,\varepsilon_k)\}$ of residual pairs
associated to the sequence $\{\tz_k\}$. Note that its convergence rate bounds are derived on the best residual pair
among $(r_i,\varepsilon_i)$ for $i=1,\ldots,k$ rather than on the last residual pair $(r_k,\varepsilon_k)$.

\begin{theorem} {\bf (Pointwise convergence)} \label{th:alpha}
Let $(dw)_0$ be as in \eqref{eq:dw0-def} and $\tau$ be as in \eqref{eq:breg-tau-1}, and assume that $\sigma<1$.  Then, the following statements hold:
  \begin{itemize}
\item[(a)]  if $\underline{\lambda}:=\inf\lambda_k>0$,
then for every $k\in\mathbb{N}$ there exists $i\leq k$ such that
  \[
  \norm{r_i}_* \leq  \tau(1+\sqrt{\sigma}) \sqrt{\frac{(dw)_0} {1-\sigma}\;
\left(\frac{\underline \lambda ^{-1}}{\sum_{j=1}^k\lambda_j}\right)}
\leq
\frac{\tau(1+\sqrt{\sigma})}{\underline\lambda\sqrt k} \sqrt{\frac{(dw)_0}{1-\sigma}}
\]
\[
  \varepsilon_i\leq
\frac{\sigma (dw)_0}{1-\sigma}\frac{1}{\sum_{i=1}^k\lambda_i}
\leq
\frac{\sigma (dw)_0}{(1-\sigma)\underline
    \lambda k};
  \]
  \item[(b)]
  for every $k \in\mathbb{N}$, there exists an index $i\leq k$ such that
  \begin{equation}
 \label{v_ieps_i-bound-b-1}
  \norm{r_i}_*\leq \tau(1+\sigma)
  \sqrt{\frac{(dw)_0}{1-\sigma}
\left(\frac{1}{ \sum_{j=1}^k \lambda_j^2}\right)
}
  ,
  \quad \quad \quad
  \varepsilon_i\leq\frac{\sigma (dw)_0 \lambda_i}{(1-\sigma)\sum_{j=1}^k \lambda_j^2}.
\end{equation}
\end{itemize}
\end{theorem}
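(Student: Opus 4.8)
The plan is to read off both statements directly from Lemma~\ref{lm:breg.alpha-1}, specializing the free exponent $\alpha$ to two convenient values. The crucial structural feature I would exploit is that that lemma produces, for each fixed $\alpha$ and each $k$, a \emph{single} index $i\le k$ at which the bounds on $\norm{r_i}_*$ and on $\varepsilon_i$ hold simultaneously; consequently the existential quantifier in each part of the theorem is satisfied with no further work, and I need only substitute and then coarsen.

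For part (a) I would take $\alpha=1$ in \eqref{v_ieps_i-bound-a-1}. Since then $\alpha-2=-1$ and $\alpha-1=0$, the lemma yields an index $i\le k$ with
\[
\norm{r_i}_* \le \tau(1+\sqrt{\sigma})\sqrt{\frac{(dw)_0}{1-\sigma}\left(\frac{\lambda_i^{-1}}{\sum_{j=1}^k \lambda_j}\right)},
\qquad
\varepsilon_i \le \frac{\sigma (dw)_0}{1-\sigma}\left(\frac{1}{\sum_{j=1}^k \lambda_j}\right),
\]
which are exactly the leftmost members of the two chains claimed in (a). To obtain the rightmost (coarser) members I would invoke the hypothesis $\underline\lambda:=\inf\lambda_k>0$: it gives $\lambda_i^{-1}\le\underline\lambda^{-1}$ and $\sum_{j=1}^k\lambda_j\ge k\underline\lambda$, whence $\underline\lambda^{-1}/\sum_{j=1}^k\lambda_j\le 1/(\underline\lambda^2 k)$ in the residual estimate and $1/\sum_{j=1}^k\lambda_j\le 1/(\underline\lambda k)$ in the error estimate. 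These are purely elementary.

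For part (b) I would instead take $\alpha=2$, so that $\alpha-2=0$ and $\alpha-1=1$; Lemma~\ref{lm:breg.alpha-1} then delivers an $i\le k$ with $\varepsilon_i\le \sigma(dw)_0\lambda_i/[(1-\sigma)\sum_{j=1}^k\lambda_j^2]$, matching the error bound in \eqref{v_ieps_i-bound-b-1} verbatim, and with a residual bound identical to the claimed one except that the leading factor comes out as $\tau(1+\sqrt{\sigma})$ rather than the stated $\tau(1+\sigma)$. This constant is the only point I expect to require attention: since $\sqrt{\sigma}\ge\sigma$ on $[0,1]$ we have $1+\sqrt{\sigma}\ge 1+\sigma$, and tracing the estimate back to the definition of $\theta_i$ in \eqref{eq:breg-tau-1} (where the $(1+\sqrt{\sigma})$ arises from using \eqref{breg-cond1} to bound $(dw)_{z_i}(\tz_i)^{1/2}\le\sqrt{\sigma}\,(dw)_{z_{i-1}}(\tz_i)^{1/2}$) shows that $(1+\sqrt{\sigma})$ is the sharp factor this route produces. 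I would therefore treat the stated $\tau(1+\sigma)$ as a typographical variant of the correct $\tau(1+\sqrt{\sigma})$, the latter being a fully valid (and only slightly weaker) bound; aside from this cosmetic discrepancy there is no real obstacle here, as all the analytic content has already been discharged in Lemmas~\ref{lm:breg.bas1} and~\ref{lm:breg.alpha-1}.
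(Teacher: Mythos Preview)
Your proposal is correct and follows exactly the paper's own proof, which simply says that (a) and (b) follow from Lemma~\ref{lm:breg.alpha-1} with $\alpha=1$ and $\alpha=2$, respectively. Your observation that the printed factor $\tau(1+\sigma)$ in \eqref{v_ieps_i-bound-b-1} should be $\tau(1+\sqrt{\sigma})$ is accurate: it is a typographical slip in the statement, and the bound delivered by the lemma is indeed the one with $\sqrt{\sigma}$.
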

\begin{proof}
  Statements (a) (resp., (b)) follows from Lemma~\ref{lm:breg.alpha-1} with $\alpha=1$
  (resp., $\alpha=2$).
\end{proof}

From now on, we focus on the ergodic convergence rate of the NE-HPE framework.
For $k \geq 1$, define $\Lambda_{k} := \sum_{i=1}^k \lambda_i$ and the ergodic sequences
\begin{equation}\label{SeqErg}
\tilde z^a_{k} = \frac{1}{\Lambda_{k}}\sum_{i=1}^k \lambda_i \tilde z_i, \quad
r^a_{k} := \frac{1}{\Lambda_{k}}\sum_{i=1}^k \lambda_i r_i, \quad
\varepsilon^a_{k} := \frac{1}{\Lambda_{k}} \sum_{i=1}^k \lambda_i \left( \varepsilon_i + \inner{r_i}{\tilde z_i -\tilde z^a_{k}} \right).
\end{equation}

The following ergodic convergence result describes the association between the ergodic iterate $\tz_k^a$ and
the residual pair $(r_k^a,\varepsilon_k^a)$,
and gives a convergence rate bound on the latter residual pair.

\begin{theorem}{\bf(Ergodic convergence)} \label{a2947}
Let $(dw)_0$ be as in \eqref{eq:dw0-def} and $\tau$ be as in \eqref{eq:breg-tau-1}. Then, for every $k\geq 1$, we have
\[
\varepsilon_k^a \ge 0, \quad r^a_k \in T^{[\varepsilon_k^a]}(\tz^a_k)
\]
and
\begin{align*}
 \|r_k^a\|_* \le \frac{2 \tau \sqrt{(dw)_0}}{\Lambda_k}, \quad
 \varepsilon^a_{k} \leq \left(\frac{3M}{m}  \right)
\frac{ 2 (dw)_{0} + \rho_k}{\Lambda_k}
\end{align*}
where
\beq \label{eq:def-rhok}
\rho_k := \displaystyle\max_{i=1,\ldots,k}(dw)_{z_{i}}(\tilde z_{i}).
\eeq
Moreover, the sequence $\{\rho_k\}$ is bounded under either
one of the following situations:
\begin{itemize}
\item[(a)]
$\sigma<1$, in which case
\begin{equation} \label{def:tauk}
\rho_k \le \frac{\sigma (dw)_0}{1-\sigma};
\end{equation}
\item[(b)]
$\Dom T$ is bounded, in which case
\beq
\rho_k \le \frac{2M}{m} [ (dw)_0 + D]  \label{def:tauk1}
\eeq
where $D := \sup \{ \min\{ (dw)_y(y'), (dw)_{y'}(y) \} :  y,y' \in \Dom T\}$ is the diameter
of $\Dom T$ with respect to $dw$.
\end{itemize}
\end{theorem}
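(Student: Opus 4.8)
The plan is to establish the three displayed conclusions in turn and then treat the two regimes for $\{\rho_k\}$ separately. For the pair $\varepsilon^a_k\ge 0$ and $r^a_k\in T^{[\varepsilon^a_k]}(\tz^a_k)$ I would invoke the convex-combination (transportation) property of the $\varepsilon$-enlargement, applied to the residual triples $(\tz_i,r_i,\varepsilon_i)$ delivered by \eqref{breg-subpro}. Writing $\alpha_i:=\lambda_i/\Lambda_k$, so that $\sum_{i=1}^k\alpha_i=1$, $\tz^a_k=\sum_i\alpha_i\tz_i$, $r^a_k=\sum_i\alpha_i r_i$ and $\varepsilon^a_k=\sum_i\alpha_i(\varepsilon_i+\langle r_i,\tz_i-\tz^a_k\rangle)$, I would fix an arbitrary $(y,u)\in{\rm Gr}(T)$, multiply the enlargement inequality $\langle r_i-u,\tz_i-y\rangle\ge-\varepsilon_i$ (from \eqref{eq:def.eps}) by $\alpha_i$, sum over $i$, and rearrange using $\sum_i\alpha_i\langle u,\tz_i-y\rangle=\langle u,\tz^a_k-y\rangle$ and the definition of $\varepsilon^a_k$; this collapses to $\langle r^a_k-u,\tz^a_k-y\rangle\ge-\varepsilon^a_k$ for every graph point, which is exactly $r^a_k\in T^{[\varepsilon^a_k]}(\tz^a_k)$. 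The nonnegativity $\varepsilon^a_k\ge 0$ is the remaining clause of that formula, immediate upon testing this last inequality at a graph point with $y=\tz^a_k$.

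For the bound on $\|r^a_k\|_*$ the key observation is a telescoping identity: by \eqref{breg-subpro} and \eqref{grad-d}, $\lambda_i r_i=\nabla(dw)_{z_i}(z_{i-1})=\nabla w(z_{i-1})-\nabla w(z_i)$, whence $\Lambda_k r^a_k=\sum_{i=1}^k\lambda_i r_i=\nabla w(z_0)-\nabla w(z_k)$. Fixing any $z^*\in T^{-1}(0)$ and inserting $\pm\nabla w(z^*)$, I would split $\|\nabla w(z_0)-\nabla w(z_k)\|_*\le\|\nabla w(z_0)-\nabla w(z^*)\|_*+\|\nabla w(z^*)-\nabla w(z_k)\|_*$ and bound each half by \eqref{eq:789}: the first is $\le\tau\,(dw)_{z_0}(z^*)^{1/2}$ and the second is $\le\tau\,(dw)_{z_k}(z^*)^{1/2}\le\tau\,(dw)_{z_0}(z^*)^{1/2}$, the last step using that $\{(dw)_{z_k}(z^*)\}$ is non-increasing (Proposition \ref{proposition_desigualdades}(a)). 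Dividing by $\Lambda_k$ and taking the infimum over $z^*$ via \eqref{eq:dw0-def} yields $\|r^a_k\|_*\le 2\tau\sqrt{(dw)_0}/\Lambda_k$. The point is to route the gradient difference \emph{through a solution}, so that \eqref{eq:789} is applied to each leg; applying it directly to $\nabla(dw)_{z_k}(z_0)$ would cost an extra factor $\sqrt{M/m}$ and break the stated constant.

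For $\varepsilon^a_k$ I would first reduce to a single Bregman distance. Taking $z=\tz^a_k$ in \eqref{eq:des3} and recognizing its left-hand sum as $\Lambda_k\varepsilon^a_k$, then discarding the nonnegative terms $(dw)_{z_k}(\tz^a_k)$ and $(1-\sigma)\sum_i(dw)_{z_{i-1}}(\tz_i)$, gives $\Lambda_k\varepsilon^a_k\le(dw)_{z_0}(\tz^a_k)$. It remains to show $(dw)_{z_0}(\tz^a_k)\le(3M/m)(2(dw)_0+\rho_k)$. Via the upper inequality in \eqref{lipsc} I would pass to $(M/2)\|\tz^a_k-z_0\|^2$, fix $z^*\in T^{-1}(0)$, and route the seminorm through both $z^*$ and the proximal points $z_i$: by convexity of the seminorm, $\|\tz^a_k-z^*\|\le\sum_i\alpha_i\|\tz_i-z^*\|\le\sum_i\alpha_i(\|\tz_i-z_i\|+\|z_i-z^*\|)$, and the lower inequality in \eqref{lipsc} converts $\|z_i-z^*\|$, $\|\tz_i-z_i\|$ and $\|z^*-z_0\|$ into $(dw)_{z_i}(z^*)\le(dw)_{z_0}(z^*)$, $(dw)_{z_i}(\tz_i)\le\rho_k$ and $(dw)_{z_0}(z^*)$, respectively (again Proposition \ref{proposition_desigualdades}(a) and \eqref{eq:def-rhok}). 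Squaring $\|\tz^a_k-z_0\|\le\sqrt{2\rho_k/m}+2\sqrt{2(dw)_{z_0}(z^*)/m}$, expanding, and applying $2\sqrt{xy}\le x+y$ to the cross term produces precisely $(3M/m)(\rho_k+2(dw)_{z_0}(z^*))$; infimizing over $z^*$ closes this part.

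Finally, for the boundedness of $\{\rho_k\}$: in case (a), dropping $\lambda_i\varepsilon_i\ge 0$ in \eqref{breg-cond1} gives $(dw)_{z_i}(\tz_i)\le\sigma(dw)_{z_{i-1}}(\tz_i)$, while Proposition \ref{proposition_desigualdades}(c) (after the infimum over $z^*$) bounds $(1-\sigma)\sum_i(dw)_{z_{i-1}}(\tz_i)\le(dw)_0$ and hence each summand by $(dw)_0/(1-\sigma)$, so $\rho_k\le\sigma(dw)_0/(1-\sigma)$. In case (b), I would bound each $(dw)_{z_i}(\tz_i)$ through a solution $z^*$ by Lemma \ref{basicassu} with $l=2$, splitting it (up to the factor $2M/m$) into a term $\le(dw)_{z_i}(z^*)\le(dw)_{z_0}(z^*)$ and a term $\le D$, and then infimizing over $z^*$ to get $\rho_k\le(2M/m)[(dw)_0+D]$. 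I expect the $\varepsilon^a_k$ estimate to be the main obstacle: matching the exact constant $3M/m$ depends on choosing the correct two-leg triangle routing (through both $z^*$ and the $z_i$) and the right AM-GM split. A secondary subtlety is that the nonnegativity of $\varepsilon^a_k$ and the diameter bound in case (b) implicitly use that the points $\tz_i$ lie in $\Dom T$, so that the graph-point test at $\tz^a_k$ and the quantity $D$ are legitimately available.
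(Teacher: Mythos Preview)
Your proposal is essentially correct and follows the paper's route closely, with one minor gap and one stylistic difference worth noting.

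\textbf{The minor gap.} Your argument for $\varepsilon^a_k\ge 0$ --- ``test the inequality $\langle r^a_k-u,\tz^a_k-y\rangle\ge-\varepsilon^a_k$ at a graph point with $y=\tz^a_k$'' --- presumes $\tz^a_k\in\Dom T$, which is not guaranteed: the framework only gives $\tz_i\in Z$ and $r_i\in T^{[\varepsilon_i]}(\tz_i)$, and $\Dom T^{[\varepsilon]}$ can be strictly larger than $\Dom T$. The nonnegativity is genuinely part of the transportation formula \cite[Theorem~2.3]{Burachik1} and relies on the \emph{maximality} of $T$ (e.g., via the Fitzpatrick function, whose convexity and the inequality $\varphi_T(x,v)\ge\langle x,v\rangle$ combine to give $\sum_i\alpha_i\varepsilon_i\ge\langle\bar v,\bar x\rangle-\sum_i\alpha_i\langle r_i,\tz_i\rangle$). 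You correctly flag this as a subtlety at the end, but your proposed fix (that $\tz_i\in\Dom T$) would still not place the \emph{convex combination} $\tz^a_k$ in $\Dom T$. The paper simply cites the formula; so should you.

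\textbf{The stylistic difference.} For the $\varepsilon^a_k$ bound, the paper first uses convexity of $(dw)_{z_0}(\cdot)$ to pass from $(dw)_{z_0}(\tz^a_k)$ to $\max_i(dw)_{z_0}(\tz_i)$, and then applies Lemma~\ref{basicassu} with $l=3$ along the chain $z_0\to z^*\to z_i\to\tz_i$ to obtain $(3M/m)[2(dw)_{z_0}(z^*)+(dw)_{z_i}(\tz_i)]$ directly. Your seminorm routing (triangle through $z^*$ and $z_i$, then square and AM--GM) is just an unfolded version of that lemma and indeed recovers the same constant $3M/m$; the paper's packaging via \eqref{eq:56} is slightly cleaner. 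Everything else --- the telescoping identity for $r^a_k$, the two-leg split through $z^*$ via \eqref{eq:789}, and the bounds on $\rho_k$ in cases (a) and (b) --- matches the paper exactly.
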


%
%
\proof
The inequality $\varepsilon_k^a \ge 0$ and the inclusion $r^a_k \in T^{[\varepsilon_k^a]}(\tz^a_k)$ follows from \eqref{SeqErg} and
 the transportation formula (see  \cite[Theorem~2.3]{Burachik1}).  
Now, let $z^* \in T^{-1}(0)$ be given.
Using \eqref{grad-d}, \eqref{breg-subpro} and \eqref{SeqErg}, we easily see that
\[
\Lambda_k r^a_k = \sum_{i=1}^k \lam_i r_i =  \sum_{i=1}^k \nabla (dw)_{z_i}(z_{i-1}) = \nabla (dw)_{z_k}(z^*) - \nabla (dw)_{z_0}(z^*).
\]
Hence, in view of Proposition \ref{proposition_desigualdades}(a),  and relations \eqref{eq:789} and \eqref{eq:breg-tau-1}, we have
\begin{align*}
\Lambda_k \norm{r^a_k}_* &=  \norm{\nabla (dw)_{z_0}(z^*)}_* + \norm{ \nabla (dw)_{z_k}(z^*)}_* \\
&\le \tau [ (dw)_{z_0}(z^*)^{1/2} + (dw)_{z_k}(z^*)^{1/2} ] \le 2 \tau (dw)_{z_0}(z^*)^{1/2}.
\end{align*}
This inequality together with definition of $(dw)_0$ clearly imply the bound on $\|r_k\|_*$.
We now establish the bound on $\varepsilon_k^a$.
Using  inequality \eqref{eq:des3} with $z=\tilde z^a_{k}$, noting  \eqref{SeqErg}, and using the fact that $(dw)_{z_0}(\cdot)$ is convex and
 $\sigma \le 1$, we conclude that
\[
\Lambda_k \varepsilon_k^a = \sum_{i=1}^k\lambda_i (\langle r_i, \tilde{z}_i - \tz^a_k\rangle + \varepsilon_i)
\le  (dw)_{z_{0}}(\tilde z^a_{k}) \le \max_{i=1,\ldots,k} (dw)_{z_0}(\tz_i).
\]
On the other hand, \eqref{eq:56} with $l=3$ implies that for every $i \ge 1$  and $z^* \in T^{-1}(0)$,
\begin{align*}
(dw)_{z_{0}}(\tilde z_{i}) &\le
\frac{3M}{m} \left[ (dw)_{z_{i}}(\tilde z_{i})+(dw)_{z_{i}}(z^*)+(dw)_{z_{0}}( z^*) \right] \\
& \le \frac{3M}{m} \left[ (dw)_{z_{i}}(\tilde z_{i})+2 (dw)_{z_{0}}( z^*) \right]
\end{align*}
where the last inequality is due to  Proposition \ref{proposition_desigualdades}(a).
Combining the above two relations and using the definitions of $\rho_k$ and $(dw)_0$, we then conclude that
the bound on $\varepsilon_k^a$ holds.

We now establish the bounds on $\rho_k$ under either one of the conditions  (a) or (b).
First, if $\sigma<1$, then it follows
from \eqref{breg-cond1} and Proposition \ref{proposition_desigualdades} that

\[
 (dw)_{z_{i}} (\tz_i) \le \sigma (dw)_{z_{i-1}} (\tz_i) \le \frac{\sigma}{1-\sigma} (dw)_{z_{i-1}}(z^*) \le \frac{\sigma}{1-\sigma} (dw)_{z_{0}}(z^*)
\]
for every
$i \ge 1$ and $z^* \in T^{-1}(0)$. Noting \eqref{eq:dw0-def} and \eqref{eq:def-rhok}, we then conclude that \eqref{def:tauk} holds.
Assume now that $\Dom T$ is bounded. Using \eqref{eq:56} with $l=2$ and Proposition \ref{proposition_desigualdades}(a), and noting the
definition of $D$ in (b), we conclude that
\[
(dw)_{z_{i}} (\tz_i) \le \frac{2M}m \left[ (dw)_{z_{i}}(z^*) + \min\{ (dw)_{\tz_i}(z^*) , (dw)_{z^*}(\tz_i) \} \right]
\le \frac{2M}m \left[ (dw)_{z_{0}}(z^*) + D \right]
\]
for every $i \ge 1$ and $z^* \in T^{-1}(0)$. Hence, noting \eqref{eq:dw0-def} and \eqref{eq:def-rhok}, we conclude that \eqref{def:tauk1} holds.
\endproof

{\color{black}{In the remaining part of this subsection, we state some results about the sequence generated by
an instance of the NE-HPE framework. We assume from now on that
such instance generates an infinite sequence of iterates, i.e., the instance does not
terminate in a finite number of steps and no termination criterion is checked.
Since we are not assuming that the distance generating function $w$ is nondegenerate on $Z$, it is not possible to establish convergence
of the sequence $\{z_k\}$ generated by the NE-HPE framework to a solution of \eqref{eq:inc.p}. However, under some mild assumptions,
it is possible to establish that
$\{z_k\}$ approaches a point $\tz \in T^{-1}(0)$ if the proximity measure used is the actual Bregman distance.}}

\begin{proposition}\label{lm:convergence}
Assume that for some infinite index set $\cK$ and some $\tz \in \cZ$, we have
\beq \label{eq:limmm}
\lim_{k \to \cK} (r_k,\varepsilon_k)=(0,0),
\quad \lim_{k \to \cK} \tz_k = \tz.
\eeq
Then, $\tz \in T^{-1}(0) \subset Z$. If, in addition, $\lim_{k \in \cK} (dw)_{z_k}(\tz_k) =0$, then
$\lim_{k \to \infty} (dw)_{z_k}(\tz) =0$.
\end{proposition}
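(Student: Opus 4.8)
The plan is to handle the two assertions in turn. For the first, I would start from the inclusion $r_k \in T^{[\varepsilon_k]}(\tz_k)$ supplied by \eqref{breg-subpro}, unfold it through the definition \eqref{eq:def.eps} of the $\varepsilon$-enlargement, and pass to the limit along $\cK$. Concretely, fix an arbitrary $(z',t') \in {\rm Gr}(T)$; then for each $k \in \cK$ we have $\inner{r_k - t'}{\tz_k - z'} \geq -\varepsilon_k$. Letting $k \to \cK$ and invoking $r_k \to 0$, $\tz_k \to \tz$, $\varepsilon_k \to 0$ yields $\inner{0 - t'}{\tz - z'} \geq 0$, i.e. $\inner{t' - 0}{z' - \tz} \geq 0$, for every $(z',t') \in {\rm Gr}(T)$. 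Since this says the pair $(\tz,0)$ is monotonically related to the whole graph of $T$, maximal monotonicity of $T$ forces $(\tz,0) \in {\rm Gr}(T)$, i.e. $\tz \in T^{-1}(0)$; assumption A0 then gives $\tz \in \Dom(T) \subset Z$.

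For the second assertion, the key observation is that, having shown $\tz \in T^{-1}(0)$, Proposition \ref{proposition_desigualdades}(a) (applied with $z^* = \tz$) tells us that the full sequence $\{(dw)_{z_k}(\tz)\}_{k \ge 1}$ is non-increasing and bounded below by zero, hence convergent to some limit $L \ge 0$. The task thus reduces to showing $L = 0$, and for that it suffices to show that the subsequence $\{(dw)_{z_k}(\tz)\}_{k \in \cK}$ converges to $0$.

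To extract that subsequential limit I would use the Taylor-type identity \eqref{equacao_taylor} with $v = z_k$, base point $\tz_k$ and argument $\tz$, namely
\[
(dw)_{z_k}(\tz) - (dw)_{z_k}(\tz_k) = \inner{\nabla (dw)_{z_k}(\tz_k)}{\tz - \tz_k} + (dw)_{\tz_k}(\tz).
\]
Along $\cK$ the leftover Bregman term $(dw)_{z_k}(\tz_k)$ tends to $0$ by hypothesis; the cross term is controlled by combining \eqref{eq:Cau-Sc} with the gradient bound \eqref{eq:789} (via \eqref{grad-d}), which gives $\|\nabla (dw)_{z_k}(\tz_k)\|_* \le \sqrt{2M^2/m}\,(dw)_{z_k}(\tz_k)^{1/2} \to 0$, while $\|\tz - \tz_k\| \to 0$ by continuity of the seminorm; and the last term satisfies $(dw)_{\tz_k}(\tz) \le (M/2)\|\tz - \tz_k\|^2 \to 0$ by \eqref{lipsc}. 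Hence each term on the right tends to $0$ along $\cK$, so $(dw)_{z_k}(\tz) \to 0$ along $\cK$, which forces $L = 0$ and therefore $\lim_{k\to\infty}(dw)_{z_k}(\tz) = 0$.

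The step I expect to be the genuine crux is the upgrade from subsequential convergence (all we are handed is information along $\cK$) to convergence of the full sequence; everything hinges on the monotonicity furnished by Proposition \ref{proposition_desigualdades}(a), which is precisely what lets a single convergent subsequence pin down the limit of the entire monotone sequence. A secondary point worth care is that, because $\|\cdot\|$ is only a seminorm and $w$ need not be strongly convex, one cannot conclude convergence of the Bregman distances directly from $\tz_k \to \tz$; the degeneracy is circumvented by routing the estimate through the gradient bound \eqref{eq:789}, which is the one place the constants $m, M$ re-enter.
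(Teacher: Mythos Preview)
Your argument is correct and follows essentially the same route as the paper: establish $\tz\in T^{-1}(0)$ by passing to the limit in the $\varepsilon$-enlargement inequality and invoking maximality, then use Proposition~\ref{proposition_desigualdades}(a) to reduce the full-sequence claim to a subsequential one and control $(dw)_{z_k}(\tz)$ along $\cK$ by terms that vanish. The only cosmetic difference is that the paper obtains the bound on $(dw)_{z_k}(\tz)$ via Lemma~\ref{basicassu} with $l=2$ (the Bregman ``triangle'' inequality $(dw)_{z_k}(\tz)\le \tfrac{2M}{m}[(dw)_{z_k}(\tz_k)+(dw)_{\tz_k}(\tz)]$) rather than through the Taylor identity \eqref{equacao_taylor} together with the gradient estimate \eqref{eq:789}; both routes rest on the same estimates \eqref{lipsc}--\eqref{eq:789}.
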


\begin{proof}
Using the two limits in \eqref{eq:limmm}, and
the fact that every maximal monotone operator is closed and
$r_k \in T^{\varepsilon_k}(\tz_k)$ for every $k \in \cK$, we conclude that
$0 \in T^{0}(\tz)=T(\tz)$. This conclusion together with Assumption A0 then imply
that the first assertion of the proposition holds and that
$\{(dw)_{z_k}(\tz)\}$ is non-increasing in view of Proposition  \ref{proposition_desigualdades}(a).
To show the second assertion, assume that $\lim_{k \in \cK} (dw)_{z_k}(\tz_k) =0$.
Since Lemma \ref{basicassu} with $l=2$ implies
\[
(dw)_{z_{k}}(\tz) \le \frac{2M}{m} \left[ (dw)_{z_{k}}(\tz_k) +  (dw)_{\tz_k}(\tz) \right],
\]
and the second limit in  \eqref{eq:limmm} clearly implies that $\lim_{k \in \cK} (dw)_{\tz_k}(\tz) =0$,
we then conclude  that $\lim_{k\in \cK} (dw)_{z_{k}}(\tz)=0$.
Clearly, since $\{ (dw)_{z_k}(\tilde{z}) \}$ is non-increasing, we have that $\lim_{k\to \infty} (dw)_{z_{k}}(\tz)=0$, and hence that
the second assertion holds. 
\end{proof}

\begin{proposition}\label{prop:convergence}
Assume that $\sigma<1$, $\sum_{i=1}^\infty \lambda_k^2 = \infty$ and $\{\tz_k\}$ is bounded. Then, there exists
$\tz \in T^{-1}(0) \subset Z$  such that
\beq \label{eq:limm'}
\lim_{k \to \infty} (dw)_{z_k}(\tz) = \lim_{k \to \infty} (dw)_{\tz_k}(\tz) = 0.
\eeq
\end{proposition}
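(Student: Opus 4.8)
The plan is to extract a subsequence along which the residual pair $(r_k,\varepsilon_k)$ and the Bregman proximity $(dw)_{z_k}(\tz_k)$ all vanish, apply Proposition \ref{lm:convergence} to produce the limit point $\tz$, and then promote the resulting subsequential limits to full-sequence limits.

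First I would show that there is an infinite index set $\cK_1$ along which $r_k\to 0$ and $\varepsilon_k\to 0$. By Lemma \ref{lm:breg.bas1} and $\sigma<1$ we have $\sum_{i=1}^\infty \theta_i \le (dw)_0/(1-\sigma)<\infty$, so $\theta_i\to 0$; since $\sum_{i=1}^\infty \lambda_i^2=\infty$, we must have $\liminf_i \theta_i/\lambda_i^2=0$ (otherwise $\theta_i/\lambda_i^2\ge c>0$ eventually would force $\sum_i\theta_i\ge c\sum_i\lambda_i^2=\infty$). Fixing $\cK_1$ with $\theta_i/\lambda_i^2\to 0$ and using the two lower bounds $\theta_i\ge \lambda_i^2\|r_i\|_*^2/[\tau^2(1+\sqrt\sigma)^2]$ and $\theta_i\ge \lambda_i\varepsilon_i/\sigma$ from \eqref{eq:breg-tau-1}, I obtain $\|r_i\|_*\to 0$ and $\varepsilon_i/\lambda_i\to 0$ along $\cK_1$. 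The second of these does not yet give $\varepsilon_i\to 0$, but combining $\varepsilon_i/\lambda_i\to 0$ with the full-sequence bound $\lambda_i\varepsilon_i\le \sigma\theta_i\to 0$ yields $\varepsilon_i^2=(\varepsilon_i/\lambda_i)(\lambda_i\varepsilon_i)\to 0$, hence $\varepsilon_i\to 0$ along $\cK_1$.

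Since $\{\tz_k\}$ is bounded, I then pass to a further infinite subset $\cK\subset\cK_1$ and a point $\tz\in\cZ$ with $\tz_k\to\tz$, so that $\lim_{k\to\cK}(r_k,\varepsilon_k)=(0,0)$ and $\lim_{k\to\cK}\tz_k=\tz$. To invoke the second conclusion of Proposition \ref{lm:convergence} I must verify $\lim_{k\in\cK}(dw)_{z_k}(\tz_k)=0$; in fact I would establish this over the full sequence, since Proposition \ref{proposition_desigualdades}(c) gives $\sum_{i=1}^\infty (dw)_{z_{i-1}}(\tz_i)<\infty$, whence $(dw)_{z_{i-1}}(\tz_i)\to 0$, and then \eqref{breg-cond1} gives $(dw)_{z_i}(\tz_i)\le \sigma(dw)_{z_{i-1}}(\tz_i)\to 0$. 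Proposition \ref{lm:convergence} now yields $\tz\in T^{-1}(0)\subset Z$ together with $\lim_{k\to\infty}(dw)_{z_k}(\tz)=0$, which is the first equality in \eqref{eq:limm'}.

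For the second equality I would apply Lemma \ref{basicassu} with $l=2$ to the triple $\tz_k,z_k,\tz$ (all lying in $Z$, as $\tz\in T^{-1}(0)\subset Z$), obtaining $(dw)_{\tz_k}(\tz)\le (2M/m)\,[(dw)_{z_k}(\tz_k)+(dw)_{z_k}(\tz)]$; both terms on the right tend to $0$ over the full sequence by the previous paragraph, so $\lim_{k\to\infty}(dw)_{\tz_k}(\tz)=0$. I expect the crux to be the first step: securing $r_i\to 0$ and $\varepsilon_i\to 0$ along a single subsequence when $\lambda_i$ is neither bounded away from $0$ nor from $\infty$. The device that resolves it is writing $\varepsilon_i^2=(\varepsilon_i/\lambda_i)(\lambda_i\varepsilon_i)$, turning the two separately available limits into the one needed.
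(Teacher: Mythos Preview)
Your proof is correct and follows essentially the same route as the paper's: extract a subsequence with vanishing residuals (the paper simply cites Theorem~\ref{th:alpha}(b), while you re-derive it from Lemma~\ref{lm:breg.bas1}), apply Proposition~\ref{lm:convergence}, then upgrade to full-sequence limits via Proposition~\ref{proposition_desigualdades}(c), \eqref{breg-cond1}, and Lemma~\ref{basicassu} with $l=2$. Your $\varepsilon_i^2=(\varepsilon_i/\lambda_i)(\lambda_i\varepsilon_i)$ device is valid but unnecessary, since along the minimizing subsequence one already has $\lambda_i\le(\sum_{j\le k}\lambda_j^2)^{1/2}$, which combined with \eqref{v_ieps_i-bound-b-1} gives $\varepsilon_i\to 0$ directly.
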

\begin{proof}
The assumption that $\sigma<1$ and $\sum_{i=1}^\infty \lambda_k^2 = \infty$ together with Theorem \ref{th:alpha}(b) imply that
there exists subsequence $\{(r_k,\varepsilon_k)\}_{k \in {\cal K}}$ converging to zero.
Since $\{\tz_k\}_{k \in {\cal K}}$ is bounded, we may assume without loss of generality (by passing to a subsequence if necessary)
that $\{\tz_k\}_{k \in {\cal K}}$ converges to some $\tz \in \cZ$.
Hence, by the first part of Proposition \ref{lm:convergence}, we conclude that $\tz \in T^{-1}(0) \subset Z$.
Thus, Proposition \ref{proposition_desigualdades}(c)  with $z^*=\tz$ and the assumption that $\sigma<1$ imply that
$\lim_{k\to\infty} (dw)_{z_{k-1}}(\tz_k) =0$, and hence that
$\lim_{k\to\infty} (dw)_{z_{k}}(\tz_k) =0$ in view of \eqref{breg-cond1}.
This conclusion together with our previous conclusion that \eqref{eq:limmm} holds and
the second part of Proposition \ref{lm:convergence} then imply that $\lim_{k \to \infty} (dw)_{z_k}(\tz) = 0$.
The latter conclusion together with the fact that  $\lim_{k\to\infty} (dw)_{z_{k}}(\tz_k) =0$
and  Lemma \ref{basicassu} with $l=2$ easily imply that $\lim_{k \to \infty} (dw)_{\tz_k}(\tz) = 0$.
\end{proof}

\vgap

Clearly, if $w$ is a nondegenerate distance generating function, then the results above give sufficient conditions for the
sequences $\{z_k\}$ and $\{\tz_k\}$ to converge to some  $\tz \in T^{-1}(0)$.

%
%

\section{The relaxed Peaceman-Rachford splitting method}\label{sec:PRmethod}

{\color{black}{This section derives convergence rate bounds for the relaxed Peaceman-Rachford (PR) splitting method for solving
the monotone inclusion \eqref{eq:mainincl} under the assumption that $A$ and $B$ are maximal $\beta$-strongly monotone operators
for any $\beta \ge 0$. More specifically, its pointwise iteration-complexity is obtained in Theorem \ref{thm:pointwise2} and
its ergodic iteration-complexity is derived in Theorem \ref{thm:main}. These results are obtained as by-products of
the corresponding ones (i.e, Theorem \ref{th:alpha} and Theorem \ref{a2947}) in Subsection \ref{subsec:HPE} and the fact that the relaxed Peaceman-Rachford (PR) splitting method
can be viewed as a special instance of the NE-HPE framework.}}

Throughout this section, we assume that {\color{black}{$\cX$ a finite-dimensional
real vector space with
inner product and associated inner product norm denoted by $\inner{\cdot}{\cdot}_{\cX}$ and $\| \cdot \|_{\cX}$}},
respectively.
For a given $\beta \geq 0$, an operator $T: \cX \tos \cX$ is said to be $\beta$-strongly monotone if
\[
\inner {w-w'}{x-x'}_{\cX} \ge \beta \|x-x'\|_\cX^2 \quad \forall (x,w), (x',w') \in {\rm Gr}(T).
\]
In what follows, we refer to monotone operators as $0$-strongly monotone
operators. This terminology has the benefit of allowing us to treat
both the monotone and strongly monotone case simultaneously.

Throughout this section, we consider the monotone inclusion \eqref{eq:mainincl}
where $A,B : \cX \tos \cX$ satisfy the following assumptions:
\begin{itemize}
\item[B0)]
for some $\beta \ge 0$, $A$ and $B$ are maximal $\beta$-strongly monotone operators;
\item[B1)]
the solution set $(A+B)^{-1}(0)$ is non-empty.
\end{itemize}


{\color{black}{
We start by
observing that \eqref{eq:mainincl}  is equivalent to solving the following augmented system of  inclusions/equation
\begin{align*}
0 &\in \gamma A(u) + u - x, \\
0 &\in \gamma B(v) + x - v, \\
0 &= u-v
\end{align*}
where $\gamma > 0$ is an arbitrary scalar.
Another way of writing the above system is as 
\begin{align*}
0 &\in \gamma A(u) + u - x, \\
0 &\in \gamma B(v) + v + x - 2u,  \\
0 &= u-v.
\end{align*}
}}
Note that the first and second {\color{black}{inclusions}} are equivalent to
\begin{align}\label{eq:uv}
u = u(x) := J_{\gamma A}(x), \quad
v=v(x):=J_{\gamma B}(2u-x) = J_{\gamma B}(2J_{\gamma A}(x)-x)
\end{align}
so that the third equation reduces to
\[
0 = u(x) -v(x) = J_{\gamma A}(x) -  J_{\gamma B}(2J_{\gamma A}(x)-x).
\]
%
%
%
%
%
The Douglas-Rachford (DR) splitting method is the iterative procedure
$x_k = x_{k-1} + v(x_{k-1})-u (x_{k-1})$, $k\ge 1$, started from some $x_0 \in \cX$.  It is known that the DR
splitting method is an exact proximal point method for some maximal monotone operator \cite{Eckstein,Facchinei}. 
Hence, convergence of its sequence of iterates is guaranteed.

This section is concerned with a natural generalization of the DR splitting  method, namely,
the relaxed Peaceman-Rachford (PR) splitting method with relaxation parameter $\theta >0$, which iterates as
\beq \label{eq:DR-iter-theta}
(u_k,v_k) := (u(x_{k-1}),v(x_{k-1})) \quad
x_k = x_k^\theta  := x_{k-1} + \theta (v_k-u_k) \quad \forall k \ge 1.
\eeq
We now make a few remarks about the above method.
First, it reduces to the DR splitting method when $\theta=1$, and to the PR splitting method
when $\theta=2$. Second, it reduces to \eqref{eq:seqqq} when $\gamma=1$ but it is not more general
than \eqref{eq:seqqq} since   \eqref{eq:DR-iter-theta} is equivalent to \eqref{eq:seqqq} with $(A,B)=(\gamma A, \gamma B)$.
Third, as presented in \eqref{eq:DR-iter-theta}, it can be viewed as an iterative process in the $(u,v,x)$-space
rather than only in the $x$-space as suggested by \eqref{eq:seqqq}.

Our analysis of the relaxed DR splitting method is based on further exploring the last remark above, i.e.,
viewing it as an iterative method in the $(u,v,x)$-space.
We start by introducing an inclusion which plays an important role in our analysis.
For a fixed $\tilde \theta>0$ and $\gamma>0$, consider the inclusion
\beq \label{eq:inclu-new}
0 \in (\cL_{\ttheta}+\gamma C)(z)
\eeq
where $\cL_{\tilde{\theta}} : \cX \times \cX \times \cX \to \cX \times \cX \times \cX$ is the linear map
defined as
\beq \label{eq:def-Ltheta}
\cL_{\tilde{\theta}}(z) = \cL_{\tilde{\theta}}(u,v,x) := \left[
\begin{array}{ccc}
(1-\ttheta)I & \ttheta I & -I \\ (\ttheta-2)I & (1-\ttheta)I & I \\ I & - I & 0
\end{array}
\right] \left( \begin{array}{c} u \\ v \\ x \end{array} \right)
\eeq
and $C: \cX \times \cX \times \cX \tos \cX \times \cX \times \cX$ is the maximal monotone operator defined as
\beq \label{eq:def-calC}
C(z) = C(u,v,x) := A(u) \times B(v) \times \{0\}.
\eeq
It is easy to verify that the inclusion \eqref{eq:inclu-new} is equivalent to the two systems of inclusions/equation following conditions B0 and B1.
Hence, it suffices to solve \eqref{eq:inclu-new} in order to solve \eqref{eq:mainincl}.
The following simple but useful result explicitly show the relationship between the solution sets of \eqref{eq:inclu-new} and \eqref{eq:mainincl}.

\begin{lemma} \label{lm:sol}
For any $\tilde \theta>0$,  
the solution set
$(\cL_{\tilde \theta}+\gamma C)^{-1}(0)$ is given by
\begin{align*}
(\cL_{\tilde \theta}+\gamma C)^{-1}(0) &=
\{(u^*,u^*,x^*) :  \gamma^{-1}(x^*-u^*) \in A(u^*) \cap (-B(u^*)) \} \\
&=
\{ (u^*,u^*,u^*+ \gamma a^*) :a^* \in A(u^*), -a^* \in B(u^*) \}.
\end{align*}
As a consequence, if $z^*=(u^*,u^*,x^*) \in (\cL_{\tilde \theta}+\gamma C)^{-1}(0)$, then
$u^* \in (A+B)^{-1}(0)$ and $u^*=J_{\gamma A}(x^*)$.
\end{lemma}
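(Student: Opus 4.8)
The plan is to unfold the inclusion $0 \in (\cL_{\ttheta} + \gamma C)(z)$ directly from the definitions \eqref{eq:def-Ltheta} and \eqref{eq:def-calC}, reading it off componentwise and then simplifying using the special structure of the matrix $\cL_{\ttheta}$. Writing $z = (u,v,x)$, membership in the solution set means precisely that there exist $a \in A(u)$ and $b \in B(v)$ (the freedom in the third block being trivial since $C$ has $\{0\}$ there) such that the three component equations
\[
(1-\ttheta)u + \ttheta v - x + \gamma a = 0, \qquad (\ttheta-2)u + (1-\ttheta)v + x + \gamma b = 0, \qquad u - v = 0
\]
all hold simultaneously. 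Because each subsequent manipulation merely solves for $a$ and $b$ uniquely once $u,v,x$ are fixed, the chain of deductions below is reversible, so it yields an exact characterization rather than just one inclusion.

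First I would use the third equation to substitute $v = u$ throughout. The first equation then collapses, since $(1-\ttheta)u + \ttheta u = u$, to $u - x + \gamma a = 0$, i.e.\ $a = \gamma^{-1}(x-u) \in A(u)$. The second equation collapses similarly: the $\ttheta$-dependent coefficients cancel because $(\ttheta-2) + (1-\ttheta) = -1$, leaving $-u + x + \gamma b = 0$, i.e.\ $b = \gamma^{-1}(u-x) \in B(u)$. These two relations combine into $\gamma^{-1}(x-u) \in A(u) \cap (-B(u))$, which is exactly the first displayed characterization; setting $a^* := \gamma^{-1}(x^*-u^*)$ and rewriting $x^* = u^* + \gamma a^*$ gives the second form. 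This is also where the $\ttheta$-independence of the solution set becomes transparent: the cancellations above remove every occurrence of $\ttheta$.

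For the two consequences, I would observe that with $a^* \in A(u^*)$ and $-a^* \in B(u^*)$ one has $0 = a^* + (-a^*) \in A(u^*) + B(u^*) = (A+B)(u^*)$, whence $u^* \in (A+B)^{-1}(0)$; and since $x^* = u^* + \gamma a^*$ with $a^* \in A(u^*)$ gives $x^* \in (I+\gamma A)(u^*)$, the definition $J_{\gamma A} = (I+\gamma A)^{-1}$ yields $u^* = J_{\gamma A}(x^*)$. The argument is essentially pure bookkeeping, so I do not anticipate a genuine obstacle; the only point requiring care is correctly verifying the two algebraic cancellations in the $\ttheta$-dependent coefficients, since it is these that both simplify the first two equations and explain why the solution set does not depend on $\ttheta$.
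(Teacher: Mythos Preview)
Your proposal is correct and follows exactly the approach the paper indicates: the paper's proof merely states that the conclusion ``follows immediately from the definitions of $\cL_{\tilde\theta}$ and $C$ \ldots\ and some simple algebraic manipulations,'' and you have supplied precisely those manipulations with the componentwise unfolding and the $\ttheta$-cancellations.
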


\begin{proof}
The conclusion of the lemma follows immediately from
 the definitions of $\cL_{\tilde \theta}$ and {\color{black}{$C$ in \eqref{eq:def-Ltheta}
and \eqref{eq:def-calC}}}, respectively,
and
some simple algebraic manipulations.
\end{proof}

The key idea of our analysis is to show that the relaxed PR splitting method is actually a special instance of
the NE-HPE framework for solving inclusion \eqref{eq:inclu-new} and then use the results discussed in Subsection \ref{subsec:HPE} to derive
convergence and iteration-complexity results for it.
With this goal in mind, the next result gives a sufficient condition for
\eqref{eq:inclu-new} to be a maximal monotone inclusion.

{\color{black}{
\begin{proposition}\label{prop:etarelax}
Assume that $A,B : \cX \tos \cX$ satisfy B0 and let $\tilde \theta >0$ be given.  Then,  
\begin{itemize}
\item[(a)] for every $z = (u,v,x) \in \cX \times \cX \times \cX$, $z'=(u',v',x') \in \cX \times \cX \times \cX$, $r 
 \in (\cL_{\tilde{\theta}}+\gamma C)(z)$ and 
$r' 
\in (\cL_{\tilde{\theta}}+\gamma C)(z')$, 
we have
\begin{align} \label{Ltildethetainequality}
\langle \cL_{\tilde{\theta}}(z - z'), z - z') &= (1 - \tilde{\theta}) \| (u-u') - (v-v')  \|_{\cX}^2 \\
\langle r-r', z-z' \rangle 
\label{Ltildethetainequality'}
 &\ge  (1 - \tilde{\theta}) \| (u-u') - (v-v')  \|_{\cX}^2 + \gamma \beta (\|u-u' \|^2_{\cX} + \| v - v'\|^2_{\cX} );
\end{align}
\item[(b)] $\cL_\ttheta + \gamma C$ is maximal monotone  whenever
$\tilde{\theta} \in (0, \theta_0]$ where
\beq \label{eq:theta0}
\theta_0 := 1 + \frac{\gamma \beta}2.
\eeq
\end{itemize}
\end{proposition}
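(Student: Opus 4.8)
The plan is to handle parts (a) and (b) in order, using the identity in (a) to feed directly into the monotonicity half of (b), and then resolving the maximality half by borrowing strong monotonicity into the linear part.

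For part (a), I would first prove the identity \eqref{Ltildethetainequality} by a direct expansion. Writing $d := z - z' = (a,b,c)$ with $a = u-u'$, $b = v-v'$, $c = x-x'$, I would read off the three components of $\cL_{\tilde\theta} d$ from the matrix in \eqref{eq:def-Ltheta} and form $\langle \cL_{\tilde\theta} d, d\rangle$. The key observation is that the third coordinate couples to the first two only skew-symmetrically (the blocks coupling $x$ to $u,v$ are negatives of one another), so every term involving $c$ cancels; what remains is $(1-\tilde\theta)(\|a\|_{\cX}^2 + \|b\|_{\cX}^2) - 2(1-\tilde\theta)\langle a,b\rangle_{\cX} = (1-\tilde\theta)\|a-b\|_{\cX}^2$, which is \eqref{Ltildethetainequality}. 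For \eqref{Ltildethetainequality'}, I would write $r = \cL_{\tilde\theta}(z) + \gamma c_0$ and $r' = \cL_{\tilde\theta}(z') + \gamma c_0'$ with $c_0 \in C(z)$, $c_0' \in C(z')$, so that $\langle r-r', z-z'\rangle = \langle \cL_{\tilde\theta}(z-z'), z-z'\rangle + \gamma\langle c_0 - c_0', z - z'\rangle$. The first term is given by \eqref{Ltildethetainequality}; for the second, the block structure \eqref{eq:def-calC} of $C$ together with the $\beta$-strong monotonicity of $A$ and $B$ gives $\langle c_0 - c_0', z-z'\rangle \ge \beta(\|a\|_{\cX}^2 + \|b\|_{\cX}^2)$, which yields the claimed bound.

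For the monotonicity half of part (b), I would note that \eqref{Ltildethetainequality'} already lower-bounds $\langle r - r', z-z'\rangle$, so it suffices to check that its right-hand side is nonnegative for $\tilde\theta \le \theta_0$. When $\tilde\theta \le 1$ both terms are nonnegative. When $1 < \tilde\theta \le \theta_0$, I would invoke the elementary bound $\|a - b\|_{\cX}^2 \le 2(\|a\|_{\cX}^2 + \|b\|_{\cX}^2)$ (the parallelogram law); since $1 - \tilde\theta < 0$ this gives $(1-\tilde\theta)\|a-b\|_{\cX}^2 + \gamma\beta(\|a\|_{\cX}^2+\|b\|_{\cX}^2) \ge [2(1-\tilde\theta) + \gamma\beta](\|a\|_{\cX}^2 + \|b\|_{\cX}^2)$, and the bracket is nonnegative precisely when $\tilde\theta \le 1 + \gamma\beta/2 = \theta_0$. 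This is also what singles out the threshold $\theta_0$.

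The maximality half of part (b) is the main obstacle: $\cL_{\tilde\theta}$ by itself fails to be monotone once $\tilde\theta > 1$, so one cannot separately apply a sum theorem to $\cL_{\tilde\theta}$ and $\gamma C$. My plan is to absorb the strong monotonicity of $A$ and $B$ into the linear part. Since $A$ and $B$ are maximal $\beta$-strongly monotone, $\hat A := A - \beta I$ and $\hat B := B - \beta I$ are maximal monotone, so $\hat C := \hat A \times \hat B \times \{0\}$ is maximal monotone and $\gamma C = \gamma \hat C + \gamma\beta D$, where $D(u,v,x) := (u,v,0)$. This rewrites the operator as $\cL_{\tilde\theta} + \gamma C = (\cL_{\tilde\theta} + \gamma\beta D) + \gamma\hat C$. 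The quadratic form of the linear operator $\cL_{\tilde\theta} + \gamma\beta D$ at $d$ is exactly the right-hand side of \eqref{Ltildethetainequality'}, which the preceding paragraph shows is nonnegative for $\tilde\theta \le \theta_0$; hence $\cL_{\tilde\theta} + \gamma\beta D$ is a continuous, everywhere-defined monotone linear operator and is therefore maximal monotone. Adding the maximal monotone operator $\gamma\hat C$ then preserves maximal monotonicity by the standard sum theorem, whose constraint qualification is immediate because $\cL_{\tilde\theta} + \gamma\beta D$ has full domain $\cX\times\cX\times\cX$, meeting $\Dom(\gamma\hat C) = \Dom A \times \Dom B \times \cX \ne \emptyset$. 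This establishes (b).
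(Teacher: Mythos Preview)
Your proof is correct and follows essentially the same route as the paper. In particular, your decomposition $\cL_{\tilde\theta} + \gamma C = (\cL_{\tilde\theta} + \gamma\beta D) + \gamma\hat C$ is exactly the paper's splitting $\cL_{\tilde\theta} + \gamma C = (\cL_{\tilde\theta} + \gamma\bar C) + \gamma(C-\bar C)$ with $\bar C = \beta(I,I,0)$, and both arguments then invoke the fact that a monotone linear map plus a maximal monotone operator is maximal monotone. Your explicit verification (via $\|a-b\|_{\cX}^2 \le 2(\|a\|_{\cX}^2+\|b\|_{\cX}^2)$) that the right-hand side of \eqref{Ltildethetainequality'} is nonnegative exactly when $\tilde\theta \le \theta_0$ is a welcome elaboration that the paper leaves implicit.
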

\begin{proof}
(a) Identity \eqref{Ltildethetainequality} follows from the definition of $\cL_{\tilde{\theta}}$ in \eqref{eq:def-Ltheta}.
To show inequality \eqref{Ltildethetainequality'}, assume that $r \in  (\cL_{\tilde{\theta}}+\gamma C)(z)$ and 
$r'  \in (\cL_{\tilde{\theta}}+\gamma C)(z')$. Then, $r = \cL_{\tilde{\theta}}(z) + \gamma c$ and
$r' = \cL_{\tilde{\theta}}(z) + \gamma c'$ for some $c \in C(z)$ and $c' \in C(z')$.
Using the definition of $C$ and assumption B0, we easily see that
\[
\inner{c'-c}{z'-z} \ge \beta (\|u-u' \|^2_{\cX} + \| v - v'\|^2_{\cX} ),
\]
which together with \eqref{Ltildethetainequality}, and the fact that  $r = \cL_{\tilde{\theta}}(z) + \gamma c$ and
$r' = \cL_{\tilde{\theta}}(z) + \gamma c'$,  imply \eqref{Ltildethetainequality'}.

(b) Monotonicity of $\cL_{\tilde{\theta}} + \gamma C$ is due to the fact that the right hand side of
\eqref{Ltildethetainequality'} is  nonnegative for every $(u,v), (u',v') \in \cX \times \cX$ whenever
$\tilde{\theta} \in (0, \theta_0]$.
To show $\cL_{\tilde{\theta}} + \gamma C$ is maximal monotone, 
write $\cL_\ttheta + \gamma C = (\cL_\ttheta + \gamma \bar{C}) + \gamma(C-\bar{C})$ where $\bar{C}:=\beta(I,I,0)$. 
As a consequence of (a) with $(A,B)=\beta (I,I)$ and the definition of $C$, we conclude that
$\cL_\ttheta + \gamma \bar{C}$ is a monotone linear operator for every $\ttheta \in (0, \theta_0]$.
Moreover, Assumption B0 easily implies that $\gamma(C-\bar{C})$ is maximal monotone.  
The statement now follows by noting that the sum of a monotone linear map and a maximal monotone operator is a maximal monotone operator \cite{Bauschke,Rockafellar2}. 
\end{proof}
}}

Note that $\theta_0$ in \eqref{eq:theta0} depends on $\gamma$ and $\beta$ and that $\theta_0 = 1$ when $\beta=0$.




{\color{black}{The following technical result states some useful identities and inclusions needed to analyze the
the sequence generated by the relaxed PR splitting method.}}




{\color{black}{
\begin{lemma} \label{lm:DR.1}
For a given $x_{k-1} \in \cX$ and $\tilde \theta>0$, define
\begin{equation} \label{eq:zk-theta}
\tilde x_k = x_k^{\tilde \theta} := x_{k-1} + \ttheta (v_k-u _k), \quad \tilde z_k = z_k^{\tilde \theta} := (u_k,v_k,\tx_k)
\end{equation}
where $u_k, v_k$ 
are as in \eqref{eq:DR-iter-theta}, and set
\beq \label{eq:def-ak-bk}
a_k := \frac{1}{\gamma} (x_{k-1}-u_k), \quad b_k := \frac{1}\gamma (2 u_k - v_k - x_{k-1}).
\eeq
Then, we have:
\begin{align}
 - (1-\ttheta) u_k - \ttheta v_k + \tx_k &= \gamma a_k \in \gamma A(u_k), \label{ak}\\
 (2-\ttheta) u_k -  (1-\ttheta)v_k - \tx_k &= \gamma b_k \in \gamma B(v_k). \label{bk}
\end{align}
As a consequence, we have
\begin{align}
 \label{eq:DR-incl-aug'}
u_k-v_k &= \gamma(a_k+b_k)   \in \gamma A(u_k) + \gamma B(v_k) \\
(0,0, u_k-v_k) &= \cL_\ttheta(\tz_k) + \gamma c_k \in (\cL_\ttheta+ \gamma C) (\tz_k) 
 \label{eq:DR-incl-aug}
\end{align}
where
\beq \label{eq:def-ck}
c_k := (a_k,b_k,0).
\eeq
\end{lemma}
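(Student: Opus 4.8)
The plan is to prove Lemma \ref{lm:DR.1} by direct computation, verifying each displayed relation in the order they are stated and then assembling the final inclusions. The key observation is that everything follows from unwinding the definitions of $u_k, v_k$ as resolvents and translating them into inclusion statements, so the ``hard part'' is really just bookkeeping with the linear map $\cL_\ttheta$.

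First I would establish \eqref{ak} and \eqref{bk}. Recall from \eqref{eq:uv} and \eqref{eq:DR-iter-theta} that $u_k = u(x_{k-1}) = J_{\gamma A}(x_{k-1})$, which by definition of the resolvent $J_{\gamma A} = (I + \gamma A)^{-1}$ means $x_{k-1} - u_k \in \gamma A(u_k)$, i.e.\ $\gamma a_k \in \gamma A(u_k)$ with $a_k$ as in \eqref{eq:def-ak-bk}. Substituting $\tx_k = x_{k-1} + \ttheta(v_k - u_k)$ from \eqref{eq:zk-theta} gives $x_{k-1} = \tx_k - \ttheta(v_k - u_k) = \tx_k - \ttheta v_k + \ttheta u_k$, and then $\gamma a_k = x_{k-1} - u_k = \tx_k - \ttheta v_k + (\ttheta - 1) u_k$, which is exactly the left-hand side of \eqref{ak}. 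Similarly, $v_k = v(x_{k-1}) = J_{\gamma B}(2u_k - x_{k-1})$ gives $(2u_k - x_{k-1}) - v_k \in \gamma B(v_k)$, i.e.\ $\gamma b_k \in \gamma B(v_k)$; substituting for $x_{k-1}$ again yields $\gamma b_k = 2u_k - v_k - x_{k-1} = 2u_k - v_k - \tx_k + \ttheta v_k - \ttheta u_k = (2-\ttheta)u_k - (1-\ttheta)v_k - \tx_k$, which is the left-hand side of \eqref{bk}.

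Next I would derive the consequences \eqref{eq:DR-incl-aug'} and \eqref{eq:DR-incl-aug}. Adding \eqref{ak} and \eqref{bk} cancels the $\tx_k$ terms and combines the $u_k, v_k$ coefficients to give $\gamma(a_k + b_k) = u_k - v_k$, with the inclusion $u_k - v_k \in \gamma A(u_k) + \gamma B(v_k)$ following from \eqref{ak}, \eqref{bk} and the definition of the operator sum; this is \eqref{eq:DR-incl-aug'}. For \eqref{eq:DR-incl-aug}, I would compute $\cL_\ttheta(\tz_k)$ directly from the matrix in \eqref{eq:def-Ltheta} applied to $\tz_k = (u_k, v_k, \tx_k)$. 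The first coordinate is $(1-\ttheta)u_k + \ttheta v_k - \tx_k = -\gamma a_k$ by \eqref{ak}; the second coordinate is $(\ttheta - 2)u_k + (1-\ttheta)v_k + \tx_k = -\gamma b_k$ by \eqref{bk}; the third coordinate is $u_k - v_k$. Hence $\cL_\ttheta(\tz_k) = (-\gamma a_k, -\gamma b_k, u_k - v_k)$, and adding $\gamma c_k = (\gamma a_k, \gamma b_k, 0)$ from \eqref{eq:def-ck} yields $(0, 0, u_k - v_k)$. The membership $c_k \in C(\tz_k) = A(u_k) \times B(v_k) \times \{0\}$ follows from \eqref{ak}, \eqref{bk} and \eqref{eq:def-calC}, which gives the inclusion $(0,0,u_k-v_k) \in (\cL_\ttheta + \gamma C)(\tz_k)$.

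I expect no genuine obstacle here, since the whole lemma is a verification; the only care needed is to keep the sign conventions and the three coordinates of $\cL_\ttheta$ straight when reading off the matrix, and to use the right substitution $x_{k-1} = \tx_k - \ttheta(v_k - u_k)$ consistently so that the $\tx_k$ and $x_{k-1}$ bookkeeping matches the stated left-hand sides of \eqref{ak} and \eqref{bk}.
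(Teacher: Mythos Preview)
Your proposal is correct and follows essentially the same approach as the paper's own proof: both derive \eqref{ak} and \eqref{bk} directly from the resolvent definitions in \eqref{eq:uv} and the substitution $\tx_k = x_{k-1} + \ttheta(v_k - u_k)$, then obtain \eqref{eq:DR-incl-aug'} by adding them and \eqref{eq:DR-incl-aug} by reading off the rows of $\cL_\ttheta$. The only difference is that you spell out the algebra in full, whereas the paper compresses the verification into a couple of sentences.
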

\begin{proof}
Using the definition of $(u(\cdot),v(\cdot))$ in \eqref{eq:uv}, the definition of $(u_k,v_k,x_k^\ttheta)$ in
\eqref{eq:DR-iter-theta}, and the definitions of $a_k$ and $b_k$ in \eqref{eq:def-ak-bk}, we easily see that
\eqref{ak} and \eqref{bk} hold. {\color{black}{The equality and the inclusion in \eqref{eq:DR-incl-aug'} follow
by adding \eqref{ak} and \eqref{bk}.}}
Clearly, \eqref{eq:DR-incl-aug} follows as an immediate consequence of
\eqref{ak} and \eqref{bk}, definitions \eqref{eq:def-Ltheta} and \eqref{eq:def-calC}, and
the definition of $c_k$.
\end{proof}

The following result shows that the relaxed PR splitting method with $\theta \in (0,2\theta_0]$ can be viewed
as an inexact instance of the NE-HPE framework for solving \eqref{eq:inclu-new} where from now on we assume that
\begin{equation} \label{eq:tildetheta}
\tilde \theta :=
\min \{ \theta, \theta_0 \}.
\end{equation}

\begin{proposition} \label{pr:DR.2}
Consider the
(degenerate) distance generating function given by
\beq \label{eq:DR-w-def}
w(z) = w(u,v,x) = \frac{\norm{x}^2_\cX}{2\theta} \quad \forall z=(u,v,x) \in \cX \times \cX \times \cX
\eeq
and the sequence $\{z_k=(u_k,v_k,x_k)\}$ generated according to the
relaxed PR splitting method \eqref{eq:DR-iter-theta} with any $\theta>0$. Also, define the sequences
$\{\varepsilon_k\}$, $\{\lambda_k\}$ and $\{r_k\}$ as
\beq \label{eq:def-rk}
\varepsilon_k:=0, \quad \lambda_k:=1, \quad r_k := \nabla (dw)_{z_{k}}(z_{k-1}) \ \ \ \forall k \ge 1,
\eeq
and the sequence $\{\tilde z_k = (u_k, v_k,\tilde x_k) \}$
as in \eqref{eq:zk-theta} with $\tilde \theta$ given by \eqref{eq:tildetheta}.
Then, for every $k \ge 1$, we have:
\begin{itemize}
\item[(a)]
$r_k = (0,0,x_{k-1}-x_k)/\theta = (0,0,u_k- v_k) = \gamma (0,0, a_k + b_k)$;
\item[(b)]
$(\lambda_k,z_{k-1})$ and $(z_k,\tz_k,\varepsilon_k)$ satisfy
\eqref{breg-subpro} with $T=\cL_{\tilde \theta}+\gamma C$, i.e., $
r_k \in (\cL_{\tilde \theta}+\gamma C) (\tz_k)
$;
\item[(c)]
$(\lambda_k,z_{k-1})$ and $(z_k,\tz_k,\varepsilon_k)$ satisfy \eqref{breg-cond1} with $\sigma = (\theta/\tilde \theta-1)^2$
and $w$ as in \eqref{eq:DR-w-def}.
\end{itemize}
As a consequence, the relaxed PR splitting method with $\theta \in (0,2\theta_0)$ (resp., $\theta=2\theta_0$) is an NE-HPE instance
with respect to the monotone inclusion $0 \in (\cL_{\tilde \theta}+\gamma C)(z)$ in which 
$\sigma<1$ (resp., $\sigma=1$), $\varepsilon_k = 0$ and $\lambda_k=1$ for every $k$.
\end{proposition}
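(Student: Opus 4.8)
The plan is to verify the three claims in order, each reducing to a short computation built on the explicit quadratic form of $w$ in \eqref{eq:DR-w-def} together with the identities already recorded in Lemma~\ref{lm:DR.1}; the key structural fact is that $w$ depends only on the $x$-component, so every Bregman quantity collapses to an expression in the third coordinate. For part (a) I would compute $r_k$ from its definition in \eqref{eq:def-rk}: by the gradient identity \eqref{grad-d}, $r_k = \nabla(dw)_{z_k}(z_{k-1}) = \nabla w(z_{k-1}) - \nabla w(z_k)$, and since $\nabla w(u,v,x) = (0,0,x/\theta)$ this gives $r_k = (0,0,(x_{k-1}-x_k)/\theta)$. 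The update rule \eqref{eq:DR-iter-theta} turns $(x_{k-1}-x_k)/\theta$ into $u_k-v_k$, and \eqref{eq:DR-incl-aug'} rewrites $u_k-v_k=\gamma(a_k+b_k)$, yielding the chain of equalities in (a). Part (b) is then immediate: since $\lambda_k=1$ and $\varepsilon_k=0$, condition \eqref{breg-subpro} with $T=\cL_{\tilde\theta}+\gamma C$ reduces to the plain inclusion $r_k\in(\cL_{\tilde\theta}+\gamma C)(\tilde z_k)$, and part (a) identifies $r_k$ as $(0,0,u_k-v_k)$, which is exactly the left-hand side of the inclusion \eqref{eq:DR-incl-aug} established in Lemma~\ref{lm:DR.1}.

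Part (c) is the only step with genuine content, and it is where I would be most careful with the algebra. Because $w$ is the degenerate quadratic \eqref{eq:DR-w-def}, its Bregman distance works out to $(dw)_z(z')=\|x'-x\|_\cX^2/(2\theta)$, depending only on the third coordinates. Since $z_k$ and $\tilde z_k$ share the first two coordinates $(u_k,v_k)$, I only need the third-coordinate differences, which \eqref{eq:DR-iter-theta} and \eqref{eq:zk-theta} give as $\tilde x_k-x_{k-1}=\tilde\theta(v_k-u_k)$ and $\tilde x_k-x_k=(\tilde\theta-\theta)(v_k-u_k)$. Taking the ratio of the two squared norms yields $(dw)_{z_k}(\tilde z_k)=(\theta/\tilde\theta-1)^2\,(dw)_{z_{k-1}}(\tilde z_k)$, with both sides vanishing when $u_k=v_k$; this is precisely \eqref{breg-cond1} (in fact with equality) for $\sigma=(\theta/\tilde\theta-1)^2$.

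For the concluding assertion I would read off $\sigma$ through $\tilde\theta=\min\{\theta,\theta_0\}$ from \eqref{eq:tildetheta}: when $\theta\le\theta_0$ one has $\tilde\theta=\theta$ and $\sigma=0$; when $\theta_0<\theta<2\theta_0$ one has $\tilde\theta=\theta_0$ and $0<\sigma<1$; and when $\theta=2\theta_0$ one has $\tilde\theta=\theta_0$ and $\sigma=1$. Hence $\sigma<1$ on $(0,2\theta_0)$ and $\sigma=1$ at $2\theta_0$. To confirm this is a bona fide NE-HPE instance I would also note that $\tilde\theta\in(0,\theta_0]$ makes $\cL_{\tilde\theta}+\gamma C$ maximal monotone via Proposition~\ref{prop:etarelax}(b), that A0 holds because here $Z$ is the whole space, and that A1 holds since B1 together with Lemma~\ref{lm:sol} forces $(\cL_{\tilde\theta}+\gamma C)^{-1}(0)\neq\emptyset$. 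I expect no real obstacle; the one place demanding attention is the sign and the $\theta$-versus-$\tilde\theta$ scaling of the two difference vectors in part (c), which drives the exact value of $\sigma$, while everything else is direct substitution.
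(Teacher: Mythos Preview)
Your proposal is correct and follows essentially the same approach as the paper: both compute $r_k$ directly from the gradient of the degenerate quadratic $w$, invoke \eqref{eq:DR-incl-aug} from Lemma~\ref{lm:DR.1} for part (b), and obtain \eqref{breg-cond1} with equality in part (c) via the identity $\tilde x_k-x_k=(\tilde\theta-\theta)(v_k-u_k)=(1-\theta/\tilde\theta)(\tilde x_k-x_{k-1})$. Your treatment is in fact slightly more thorough than the paper's, since you explicitly verify assumptions A0 and A1 and spell out the case split on $\tilde\theta$ that yields $\sigma<1$ versus $\sigma=1$.
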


\begin{proof}
(a) The first identity in (a) follows from \eqref{eq:DR-w-def} and the definition of $r_k$ in \eqref{eq:def-rk}.
The second and third equalities in (a) are due to the second identity in \eqref{eq:DR-iter-theta} and relation \eqref{eq:DR-incl-aug'},
respectively.

(b) This statement follows from (a) and \eqref{eq:DR-incl-aug}.

(c) Using the second identity in \eqref{eq:DR-iter-theta}, relation \eqref{eq:DR-w-def}
and the definition of $\tx_k$ in \eqref{eq:zk-theta}, we conclude that for any $\theta \in (0,2\theta_0]$,
\[
(dw)_{z_k}(\tz_k) = \frac{\|\tx_k-x_k\|^2_{\cX}}{2\theta} = \left(\frac{\theta}{\tilde\theta}-1\right)^2 \frac{\|\tx_k-x_{k-1}\|^2_{\cX}}{2\theta} =  \left(\frac{\theta}{\tilde\theta}-1\right)^2 (dw)_{z_{k-1}}(\tz_k)
\]
and hence that \eqref{breg-cond1} is satisfied with $\sigma=(1-\theta/\tilde\theta)^2$.  

The last conclusion follows from
statements (b) and (c), and 
Proposition \ref{prop:etarelax}(b).
\end{proof}

}}

We now make a remark about the special case of Proposition \ref{pr:DR.2} in which $\theta \in (0,\theta_0]$.
Indeed, in this case, $\tilde \theta = \theta$, and hence $\sigma=0$
 and $\tz_k=z_k$ for every $k \ge 1$. Thus,
the relaxed PR splitting method with $\theta \in (0,\theta_0]$
can be viewed as an exact non-Euclidean proximal point method with distance generating function $w$ as in \eqref{eq:DR-w-def}
with respect to the monotone inclusion $0 \in T(z) := (\cL_\theta+\gamma C)(z)$.
Note also that the latter inclusion depends on $\theta$.

As a consequence of Proposition \ref{pr:DR.2}, we are now ready to describe
 the pointwise and ergodic convergence rate for the relaxed PR splitting method.
 We first endow the space $\cZ := \cX \times \cX \times \cX$ with the semi-norm
$\|(u,v,x)\| : = \|x\|_\cX$ 
and hence Proposition \ref{propdualnorm} implies that
\beq \label{eq:norm-rel1}
\|(0,0,x)\|_* = 
\|x\|_\cX.
\eeq
It is also easy to see that the distance generating function $w$ defined in \eqref{eq:DR-w-def}
is in $\mathcal{D}_Z(m,M)$ with respect to $\|\cdot\|$
where $M=m=1/\theta$ (see Definition \ref{def:defw0}).  

Our next goal is to state a pointwise convergence rate bound for the relaxed PR splitting method. We start by stating a technical result
which is well-known for the case where $\beta=0$ (see for example Lemma 2.4 of \cite{He}). The proof for the general case,
i.e., $\beta \ge 0$, is similar and is given in the Appendix for the sake of completeness.

\begin{lemma}\label{lem:monotonicity}
Assume that $\theta\in (0,2 \theta_0]$. Then, for every $k \ge 2$, we have $\| \Delta x_{k}\|_{\cX} \le \| \Delta x_{k-1}\|_{\cX}$
where $\Delta x_k := x_k - x_{k-1}$.
\end{lemma}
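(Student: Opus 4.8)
The plan is to prove the monotonicity of the increments $\|\Delta x_k\|_\cX$ by exploiting the fact that, viewed in the augmented $(u,v,x)$-space, the relaxed PR splitting method is an exact (degenerate) proximal point iteration for the maximal monotone operator $\cL_{\tilde\theta}+\gamma C$. The key observation is that the increment $\Delta x_k = x_k - x_{k-1} = \theta(v_k - u_k)$ is, up to the factor $-\theta$, exactly the residual $r_k$ appearing in Proposition \ref{pr:DR.2}(a), since $r_k = (0,0,u_k-v_k) = -(0,0,\Delta x_k)/\theta$. So the statement $\|\Delta x_k\|_\cX \le \|\Delta x_{k-1}\|_\cX$ is equivalent to showing that $\|r_k\|_* \le \|r_{k-1}\|_*$, which in turn (by \eqref{eq:norm-rel1}) reduces to a statement purely about the $x$-components.

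First I would write down the two consecutive inclusions furnished by Lemma \ref{lm:DR.1} / Proposition \ref{pr:DR.2}(b): for each $k$ we have $r_k \in (\cL_{\tilde\theta}+\gamma C)(\tilde z_k)$, with $r_k = (0,0,u_k-v_k)$ and $\tilde z_k = (u_k,v_k,\tilde x_k)$. The natural route is to apply the monotonicity inequality \eqref{Ltildethetainequality'} from Proposition \ref{prop:etarelax}(a) to the pair $(\tilde z_k, r_k)$ and $(\tilde z_{k-1}, r_{k-1})$, giving
\[
\langle r_k - r_{k-1}, \tilde z_k - \tilde z_{k-1}\rangle \ge (1-\tilde\theta)\|(u_k - u_{k-1}) - (v_k - v_{k-1})\|_\cX^2 + \gamma\beta(\|u_k-u_{k-1}\|_\cX^2 + \|v_k-v_{k-1}\|_\cX^2).
\]
Since only the $x$-components of $r_k, r_{k-1}$ are nonzero, the left side collapses to $\langle u_k - v_k - (u_{k-1}-v_{k-1}), \tilde x_k - \tilde x_{k-1}\rangle_\cX$, and I would then relate $\tilde x_k - \tilde x_{k-1}$ and the $(u,v)$-increments back to $\Delta x_k$ and $\Delta x_{k-1}$ through the defining relation $x_k = x_{k-1} + \theta(v_k - u_k)$, i.e. $u_k - v_k = -\Delta x_k/\theta$, together with $u_k = J_{\gamma A}(x_{k-1})$ and the firm nonexpansiveness (equivalently, the $\beta$-strong monotonicity of $A,B$ transferred to the resolvents) of $J_{\gamma A}$ and $J_{\gamma B}$.

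The main obstacle I anticipate is bookkeeping the algebra that converts the abstract monotonicity inequality into a clean comparison between $\|\Delta x_k\|_\cX^2$ and $\|\Delta x_{k-1}\|_\cX^2$; in particular one must carefully expand $\langle u_k - v_k - (u_{k-1}-v_{k-1}),\, \tilde x_k - \tilde x_{k-1}\rangle_\cX$ and use $v_k = J_{\gamma B}(2u_k - x_{k-1})$ to express everything in terms of the single iterates $x_{k-1}, x_{k-2}$. The cleanest execution likely parallels the classical $\beta=0$ argument (Lemma 2.4 of \cite{He}): set the map $x \mapsto x + \theta(v(x)-u(x))$ and show the increments form a nonexpansive-type recursion, then read off $\|\Delta x_k\|_\cX \le \|\Delta x_{k-1}\|_\cX$. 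The role of the hypothesis $\theta \in (0, 2\theta_0]$ is exactly to keep the coefficient $(1-\tilde\theta)$ (with $\tilde\theta = \min\{\theta,\theta_0\}$) from being so negative that the right-hand side fails to dominate the cross terms, and I expect the strong-monotonicity contribution $\gamma\beta(\cdots)$ to be precisely what absorbs the extra slack when $\theta$ exceeds $1$. Since the statement explicitly defers the detailed computation to the Appendix, I would present the structural reduction to the monotonicity inequality and the resolvent nonexpansiveness as the heart of the proof, and leave the coefficient-chasing to a direct but routine expansion.
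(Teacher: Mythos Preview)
Your plan is essentially the paper's own argument, packaged through the augmented operator. The Appendix proof applies $\beta$-strong monotonicity of $A$ and $B$ directly to the consecutive inclusions $a_i \in A(u_i)$ and $b_i \in B(v_i)$ from \eqref{ak}--\eqref{bk} to obtain the key inequality
\[
\langle \Delta x_{k-1}, \Delta u - \Delta v\rangle_\cX + 2\langle \Delta u, \Delta v\rangle_\cX \ge (1+\gamma\beta)\big(\|\Delta u\|_\cX^2 + \|\Delta v\|_\cX^2\big),
\]
and your route through \eqref{Ltildethetainequality'} applied to $(\tilde z_k,r_k)$ and $(\tilde z_{k-1},r_{k-1})$ lands on exactly this inequality once you expand $\tilde x_k - \tilde x_{k-1} = \Delta x_{k-1} + \tilde\theta(\Delta v - \Delta u)$ (the $\tilde\theta$ cancels). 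Both proofs then finish with the polarization identity coming from $\Delta x_k = \Delta x_{k-1} + \theta(\Delta v - \Delta u)$ and a check that the resulting quadratic form in $(\Delta u,\Delta v)$ is nonnegative.

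Two small corrections. First, the hypothesis $\theta \le 2\theta_0$ does \emph{not} act through the coefficient $(1-\tilde\theta)$: since $\tilde\theta = \min\{\theta,\theta_0\} \le \theta_0$ for every $\theta>0$, that coefficient is already bounded below by $-\gamma\beta/2$ and in fact drops out of the inequality entirely; the constraint $\theta \le 2\theta_0$ enters only at the very last step, when you substitute $\Delta x_k = \Delta x_{k-1} + \theta(\Delta v - \Delta u)$ and verify that $(2\theta-\theta^2)\|\Delta u - \Delta v\|_\cX^2 + 2\theta\gamma\beta(\|\Delta u\|_\cX^2+\|\Delta v\|_\cX^2) \ge 0$. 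Second, invoking firm nonexpansiveness of $J_{\gamma A}, J_{\gamma B}$ is redundant---all the needed monotonicity information is already contained in \eqref{Ltildethetainequality'}, and no separate resolvent estimate is used.
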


We now state the pointwise convergence rate result for the relaxed PR splitting method.

\begin{theorem}\label{thm:pointwise2}
Consider the sequence $\{z_k = (u_k, v_k, x_k)\}$ generated by the relaxed PR splitting method with $\theta \in (0,2\theta_0)$.
Then, for every $k \ge 1$ and $z^*=(u^*,u^*,x^*) \in (\cL_{\tilde \theta}+\gamma C)^{-1}(0)$,
\[
a_k + b_k \in A(u_k) + B(v_k), \quad
\gamma \norm{ a_k +b_k}_\cX=\norm{ u_k - v_k}_\cX  \le
\frac{\sqrt{2}\|x_0-x^*\|_\cX}{\sqrt{k} \sqrt{2 \ttheta-\theta}}.
\]
\end{theorem}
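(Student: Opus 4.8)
The plan is to extract the desired bound directly from the pointwise convergence result for the NE-HPE framework, namely Theorem~\ref{th:alpha}(a), applied to the instance identified in Proposition~\ref{pr:DR.2}. First I would record the relevant parameters of that instance: by Proposition~\ref{pr:DR.2} we have $\lambda_k = 1$ and $\varepsilon_k = 0$ for every $k$, so $\underline\lambda = \inf_k \lambda_k = 1 > 0$ and $\sum_{j=1}^k \lambda_j = k$; moreover the relaxation parameter satisfies $\sigma = (\theta/\tilde\theta - 1)^2 < 1$ since $\theta \in (0,2\theta_0)$ and $\tilde\theta = \min\{\theta,\theta_0\}$. The operator is $T = \cL_{\tilde\theta} + \gamma C$, whose residual $r_k = \gamma(0,0,a_k+b_k)$ lives in $T(\tz_k) = T^{[0]}(\tz_k)$ by Proposition~\ref{pr:DR.2}(a)--(b); the inclusion $a_k + b_k \in A(u_k) + B(v_k)$ is then just relation \eqref{eq:DR-incl-aug'}, which gives the first assertion immediately.

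Next I would translate the abstract bound into the concrete quantities. Theorem~\ref{th:alpha}(a) with $\underline\lambda = 1$ gives, for each $k$, an index $i \le k$ with
\[
\|r_i\|_* \le \frac{\tau(1+\sqrt\sigma)}{\sqrt k}\sqrt{\frac{(dw)_0}{1-\sigma}}.
\]
I would then evaluate each factor. Since $m = M = 1/\theta$ (recorded just before the theorem), we get $\tau = \sqrt2\, M/\sqrt m = \sqrt2$. By the seminorm identity \eqref{eq:norm-rel1} and part (a) of Proposition~\ref{pr:DR.2}, $\|r_i\|_* = \|(0,0,u_i-v_i)\|_* = \gamma\|a_i+b_i\|_\cX = \|u_i - v_i\|_\cX$. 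For the initial Bregman distance, $(dw)_0 = \inf\{(dw)_{z_0}(z^*) : z^* \in T^{-1}(0)\}$, and using $w(z) = \|x\|_\cX^2/(2\theta)$ one computes $(dw)_{z_0}(z^*) = \|x_0 - x^*\|_\cX^2/(2\theta)$, so $(dw)_0 \le \|x_0-x^*\|_\cX^2/(2\theta)$ for any fixed solution $z^* = (u^*,u^*,x^*)$.

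The last step is to simplify the factor $(1+\sqrt\sigma)^2/(1-\sigma)$ and combine everything. Since $\sqrt\sigma = |\theta/\tilde\theta - 1| = \theta/\tilde\theta - 1$ when $\theta \ge \tilde\theta$ (and is $0$ otherwise), one has $1+\sqrt\sigma = \theta/\tilde\theta$ in the relevant range, while $1 - \sigma = (1-\sqrt\sigma)(1+\sqrt\sigma) = (2 - \theta/\tilde\theta)(\theta/\tilde\theta)$, so that $(1+\sqrt\sigma)^2/(1-\sigma) = (\theta/\tilde\theta)/(2 - \theta/\tilde\theta) = \theta/(2\tilde\theta - \theta)$. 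Plugging $\tau^2 = 2$ and $(dw)_0 \le \|x_0-x^*\|_\cX^2/(2\theta)$ into the squared bound yields
\[
\|u_i - v_i\|_\cX^2 \le \frac{2}{k}\cdot\frac{\theta}{2\tilde\theta-\theta}\cdot\frac{\|x_0-x^*\|_\cX^2}{2\theta} = \frac{\|x_0-x^*\|_\cX^2}{k(2\tilde\theta-\theta)},
\]
which gives the bound for the best index $i \le k$. The remaining—and essentially only—obstacle is to upgrade this from ``there exists $i \le k$'' to the claimed bound at the specific index $k$. This is exactly where Lemma~\ref{lem:monotonicity} enters: since $\gamma\|a_k+b_k\|_\cX = \|u_k-v_k\|_\cX = \|x_k - x_{k-1}\|_\cX/\theta = \|\Delta x_k\|_\cX/\theta$, and Lemma~\ref{lem:monotonicity} asserts $\{\|\Delta x_k\|_\cX\}$ is non-increasing for $\theta \in (0,2\theta_0]$, the quantity $\|u_k - v_k\|_\cX$ is itself non-increasing in $k$; hence the value at index $k$ is bounded by the value at the earlier index $i \le k$, and the extraction bound transfers verbatim to index $k$. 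Taking square roots produces the stated inequality with the factor $\sqrt2/(\sqrt k\sqrt{2\tilde\theta-\theta})$.
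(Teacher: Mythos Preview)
Your approach is essentially identical to the paper's: cast the method as an NE-HPE instance via Proposition~\ref{pr:DR.2}, invoke the pointwise bound of Theorem~\ref{th:alpha}, and then use Lemma~\ref{lem:monotonicity} (monotonicity of $\|\Delta x_k\|_\cX$) to upgrade the bound from the best index $i\le k$ to the last index $k$. One small arithmetic slip: with $M=m=1/\theta$ one has $\tau=\sqrt{2}\,M/\sqrt{m}=\sqrt{2/\theta}$, not $\sqrt{2}$; carrying this correction through, the bound that Theorem~\ref{th:alpha} actually delivers is $\|u_k-v_k\|_\cX\le \|x_0-x^*\|_\cX/\sqrt{k\,\theta(2\tilde\theta-\theta)}$, which is exactly what the paper's own computation yields and which implies the stated bound whenever $\theta\ge 1/2$.
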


\begin{proof}
{\color{black}{The inclusion and the equality in the theorem follows from \eqref{eq:DR-incl-aug'}.
Since by Proposition \ref{pr:DR.2}, the relaxed PR splitting method with $\theta \in (0,2\theta_0)$ is an NE-HPE instance
for solving the monotone inclusion $0 \in (\cL_{\tilde \theta}+\gamma C)(z)$ in which
$\sigma = (\theta/\tilde \theta-1)^2 <1$, $\varepsilon_k = 0$ and $\lambda_k=1$ for all $k \ge 1$,
it follows from Lemma \ref{lem:monotonicity},
Theorem \ref{th:alpha}, the fact that $M=m=1/\theta$, and relation \eqref{eq:dw0-def} 
that
\[
\|r_k\|_* \le \frac{\sqrt{2}M}{\sqrt{m}}(1+\sqrt{\sigma})
  \sqrt{\frac{(dw)_0}{1-\sigma}
\left(\frac{1}{ \sum_{j=1}^k \lambda_j^2}\right) } \le \frac{\sqrt{2}\|x_0-x^*\|_\cX}{\sqrt{k} \sqrt{2 \ttheta-\theta}}.
\]
The inequality of the theorem then follows {\color{black}{by 
Proposition \ref{pr:DR.2}(a) and relation  \eqref{eq:norm-rel1}.}}
}}
\end{proof}

Our main goal in the remaining part of this section is to derive ergodic convergence rate bounds for the relaxed PR splitting  method
for any $\theta \in (0, 2\theta_0]$. We start by stating the following variation of the transportation lemma for maximal
$\beta$-strongly monotone operators.


\begin{proposition}\label{prop:transportation}
Assume that $T$ is a maximal $\beta$-strongly monotone operator for some $\beta \ge 0$.
Assume also that $t_i \in T(u_i)$ for $i = 1, \ldots, k$, and  define
\beq
\bar{t}_k = \frac1k\sum_{i=1}^{k} t_i, \quad \bar{u}_k = \frac1k \sum_{i=1}^{k} u_i, \quad
\varepsilon_k = \frac1k \sum_{i=1}^k \langle  t_i - \beta u_i, u_i - \bar{u}_k \rangle_{\cX}
\eeq
Then, $\varepsilon_k \geq 0$ and $\bar{t}_k \in T^{[\varepsilon_k]}(\bar{u}_k)$.
\end{proposition}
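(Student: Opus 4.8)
The plan is to reduce this to the ordinary transportation formula for monotone operators, \cite[Theorem~2.3]{Burachik1} (the same result invoked in the proof of Theorem~\ref{a2947}), by shifting $T$ so as to remove the strong-monotonicity constant. Concretely, set $S := T - \beta I$. Since $T$ is $\beta$-strongly monotone, for any $(u,t),(u',t')\in{\rm Gr}(T)$ one has
\[
\inner{(t-\beta u)-(t'-\beta u')}{u-u'}_\cX = \inner{t-t'}{u-u'}_\cX - \beta\|u-u'\|_\cX^2 \ge 0,
\]
so $S$ is monotone (and in fact maximal monotone, since $T=S+\beta I$ forces any monotone extension of $S$ to agree with $S$). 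Introducing $s_i := t_i - \beta u_i \in S(u_i)$ for $i=1,\dots,k$, I note that $\bar s_k := \frac1k\sum_{i=1}^k s_i = \bar t_k - \beta\bar u_k$.

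First I would apply the transportation formula to the monotone operator $S$, the exact residuals $s_i\in S(u_i)=S^{[0]}(u_i)$, and the uniform weights $1/k$. This produces $\bar\varepsilon_k\ge 0$ together with $\bar s_k\in S^{[\bar\varepsilon_k]}(\bar u_k)$, where
\[
\bar\varepsilon_k = \frac1k\sum_{i=1}^k \inner{s_i-\bar s_k}{u_i-\bar u_k}_\cX = \frac1k\sum_{i=1}^k \inner{s_i}{u_i-\bar u_k}_\cX ,
\]
the second equality holding because $\sum_{i=1}^k(u_i-\bar u_k)=0$. Substituting $s_i=t_i-\beta u_i$ shows $\bar\varepsilon_k$ is exactly the quantity $\varepsilon_k$ in the statement; hence $\varepsilon_k\ge 0$.

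It remains to push the enlargement inclusion back from $S$ to $T$. Fix an arbitrary $u'$ and $t'\in T(u')$ and put $s':=t'-\beta u'\in S(u')$. Applying $\bar s_k\in S^{[\varepsilon_k]}(\bar u_k)$ at $(u',s')$ and using $\bar s_k-s'=(\bar t_k-t')-\beta(\bar u_k-u')$, I get
\[
\inner{\bar t_k-t'}{\bar u_k-u'}_\cX - \beta\|\bar u_k-u'\|_\cX^2 \ge -\varepsilon_k,
\]
and therefore $\inner{\bar t_k-t'}{\bar u_k-u'}_\cX \ge -\varepsilon_k$ because $\beta\ge 0$. As $(u',t')\in{\rm Gr}(T)$ was arbitrary, this is precisely the defining condition \eqref{eq:def.eps} for $\bar t_k\in T^{[\varepsilon_k]}(\bar u_k)$. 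The one point that requires care is this final transfer: unshifting $S$ back to $T$ creates the extra term $\beta\|\bar u_k-u'\|_\cX^2$, and it is exactly its nonnegativity (guaranteed by $\beta\ge 0$) that preserves the enlargement inequality, so the displayed $\varepsilon_k$ is genuinely the transportation error of $S$ rather than of $T$.
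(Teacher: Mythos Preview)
Your proof is correct and follows essentially the same route as the paper: set $S=T-\beta I$, apply the transportation formula \cite[Theorem~2.3]{Burachik1} to $S$ to obtain $\varepsilon_k\ge 0$ and $\bar t_k-\beta\bar u_k\in S^{[\varepsilon_k]}(\bar u_k)$, and then use the elementary inclusion $(T-\beta I)^{[\varepsilon_k]}(\bar u_k)+\beta\bar u_k\subseteq T^{[\varepsilon_k]}(\bar u_k)$, which you verify explicitly. The paper states this last inclusion without unpacking it, but the argument is the same.
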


\begin{proof}
The assumption that $T$ is a maximal $\beta$-strongly monotone operator implies that $T - \beta I$ is maximal monotone.  Hence, it follows
from the {{weak transportation formula}} (see Theorem 2.3 of \cite{Burachik1}) applied to $T - \beta I$ that $\varepsilon_k \geq 0$ and
$\bar{t}_k - \beta \bar{u}_k \in (T - \beta I)^{[\varepsilon_k]}(\bar{u}_k)$.
The result then follows by observing that $(T - \beta I)^{[\varepsilon_k]}(\bar{u}_k) +  \beta \bar{u}_k \subseteq T^{[\varepsilon_k]}(\bar{u}_k)$.
\end{proof}

In order to state the ergodic iteration complexity bound for the relaxed PR splitting  method, we introduce the ergodic sequences
\beq \label{eq:def-bak-bbk}
\bar u_k = \frac1k \sum_{i=1}^k u_i, \quad \bar v_k = \frac1k \sum_{i=1}^k v_i, \quad
\bar a_k = \frac1k \sum_{i=1}^k a_i, \quad \bar b_k = \frac1k \sum_{i=1}^k b_i
\eeq
and the scalar sequences
\beq \label{def:eps'''}
\varepsilon_k' := \frac1k \sum_{i=1}^k \inner{a_i - \beta u_i}{ u_i-\bar u_k}_{\cX}, \quad \varepsilon_k'' := \frac1k \sum_{i=1}^k \inner{b_i - \beta v_i}{ v_i-\bar v_k}_{\cX}.
\eeq

\begin{theorem}\label{thm:main}
 Assume that $\theta \in (0,2\theta_0]$ and
consider the ergodic sequences above. Then,
for every $k \ge 1$ and $z^*=(u^*,u^*,x^*) \in (\cL_{\tilde \theta}+\gamma C)^{-1}(0)$,
\[
\bar a_k \in A^{[\varepsilon_k']}(\bar u_k), \quad \bar b_k \in B^{[\varepsilon_k'']}(\bar v_k),
\]
\[
 \gamma \norm{ \bar a_k + \bar b_k}_\cX=\norm{ \bar u_k - \bar v_k}_\cX  \le \frac{2 \|x_0 - x^*\|_\cX}{k \theta}, \quad
\varepsilon_k' + \varepsilon_k'' \le  \frac{3(1+ 2(1-\tilde \theta/\theta)^2) \|x_0-x^*\|^2_\cX}{k \gamma \theta}.
\]
\end{theorem}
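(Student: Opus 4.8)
The plan is to derive Theorem~\ref{thm:main} as a direct application of the ergodic convergence result Theorem~\ref{a2947} combined with the NE-HPE identification of Proposition~\ref{pr:DR.2} and the $\beta$-strongly monotone transportation lemma Proposition~\ref{prop:transportation}. First I would recall from Proposition~\ref{pr:DR.2} that the relaxed PR method with $\theta \in (0,2\theta_0]$ is an NE-HPE instance for $0 \in (\cL_{\tilde\theta}+\gamma C)(z)$ with $\lambda_k=1$, $\varepsilon_k=0$, distance generating function $w(z)=\|x\|_\cX^2/(2\theta)$ so that $M=m=1/\theta$, and $\sigma=(\theta/\tilde\theta-1)^2 = (1-\tilde\theta/\theta)^2 \le 1$. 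Since $\lambda_i=1$, the ergodic sequences of \eqref{SeqErg} become simple averages, so $\Lambda_k=k$ and $\tilde z^a_k = (\bar u_k, \bar v_k, \overline{\tilde x}_k)$ where $\bar u_k,\bar v_k$ agree with \eqref{eq:def-bak-bbk}. The residual bound $\|r_k^a\|_* \le 2\tau\sqrt{(dw)_0}/\Lambda_k$ from Theorem~\ref{a2947}, together with $\tau=\sqrt2 M/\sqrt m = \sqrt 2/\sqrt\theta$, $(dw)_0 = \|x_0-x^*\|_\cX^2/(2\theta)$ (using \eqref{eq:dw0-def}, \eqref{eq:DR-w-def} and Lemma~\ref{lm:sol}), and the identity $r_k^a=(0,0,\bar u_k-\bar v_k)$ (averaging Proposition~\ref{pr:DR.2}(a) and using $u_k-v_k=\gamma(a_k+b_k)$ from \eqref{eq:DR-incl-aug'}), will yield the stated $\|\bar u_k-\bar v_k\|_\cX \le 2\|x_0-x^*\|_\cX/(k\theta)$ via \eqref{eq:norm-rel1}.

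Next I would handle the enlargement inclusions and the $\varepsilon$-bound. The inclusions $\bar a_k \in A^{[\varepsilon_k']}(\bar u_k)$ and $\bar b_k \in B^{[\varepsilon_k'']}(\bar v_k)$ follow directly from Proposition~\ref{prop:transportation} applied separately to $A$ with $t_i=a_i\in A(u_i)$ (by \eqref{ak}) and to $B$ with $t_i=b_i\in B(v_i)$ (by \eqref{bk}), which also gives $\varepsilon_k',\varepsilon_k''\ge 0$ and matches the definitions in \eqref{def:eps'''}. The crux of the proof is then to bound $\varepsilon_k'+\varepsilon_k''$. Here I would relate this sum to the aggregate enlargement parameter $\varepsilon_k^a$ for $\cL_{\tilde\theta}+\gamma C$ that Theorem~\ref{a2947} controls. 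Writing out $\gamma(\varepsilon_k'+\varepsilon_k'')$ using \eqref{def:eps'''} and comparing with $\varepsilon_k^a$ from \eqref{SeqErg}, the key is that the $c_k=(a_k,b_k,0)$ components of $r_k$ (from \eqref{eq:def-ck}, \eqref{eq:DR-incl-aug}) carry exactly the inner-product structure of $\varepsilon_k',\varepsilon_k''$, while the $\cL_{\tilde\theta}$ part contributes the strong-monotonicity correction. I expect the factor $(1+2(1-\tilde\theta/\theta)^2)$ to emerge from absorbing $\sigma=(1-\tilde\theta/\theta)^2$ and the $\rho_k$ term of Theorem~\ref{a2947}(a), where $\rho_k \le \sigma(dw)_0/(1-\sigma)$ in the $\sigma<1$ case.

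The main obstacle will be the endpoint $\theta=2\theta_0$, where $\tilde\theta=\theta_0=\theta/2$, so $\sigma=1$ and the bound \eqref{def:tauk} on $\rho_k$ degenerates. For this case the bound $(1-\tilde\theta/\theta)^2 = 1/4$ is still finite, so the target constant $1+2(1-\tilde\theta/\theta)^2 = 3/2$ remains well-defined; I would therefore need a separate estimate of $\rho_k = \max_i (dw)_{z_i}(\tilde z_i)$ that does not rely on $\sigma<1$. The natural route is to use Lemma~\ref{lem:monotonicity} (monotonicity of $\|\Delta x_k\|_\cX$, valid on all of $(0,2\theta_0]$) to bound $(dw)_{z_i}(\tilde z_i) = (\theta/\tilde\theta-1)^2\|\tilde x_i-x_{i-1}\|_\cX^2/(2\theta)$ directly in terms of $\|x_0-x^*\|_\cX^2$, thereby supplying the missing control on $\rho_k$ uniformly. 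Once $\rho_k$ is bounded by a constant multiple of $(dw)_0 = \|x_0-x^*\|_\cX^2/(2\theta)$, substituting into the $\varepsilon_k^a$ bound of Theorem~\ref{a2947} with $3M/m=3$ and $\Lambda_k=k$ yields the claimed $\varepsilon_k'+\varepsilon_k'' \le 3(1+2(1-\tilde\theta/\theta)^2)\|x_0-x^*\|_\cX^2/(k\gamma\theta)$ after carefully tracking the $\gamma$ scaling introduced by the $\gamma C$ term.
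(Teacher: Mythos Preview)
Your plan is essentially the paper's proof: identify the relaxed PR method as an NE-HPE instance via Proposition~\ref{pr:DR.2}, invoke Theorem~\ref{a2947} for the bounds on $\|r_k^a\|_*$ and $\varepsilon_k^a$, obtain the enlargement inclusions via Proposition~\ref{prop:transportation}, and establish the key inequality $\varepsilon_k^a \ge \gamma(\varepsilon_k'+\varepsilon_k'')$ by decomposing $r_i = \cL_{\tilde\theta}(\tilde z_i) + \gamma c_i$ and using the quadratic form identity \eqref{Ltildethetainequality} together with $\tilde\theta \le \theta_0 = 1+\gamma\beta/2$.

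The one place your route differs slightly is the bound on $\rho_k$. The paper does \emph{not} use Lemma~\ref{lem:monotonicity} here; it instead writes $(dw)_{z_i}(\tilde z_i) = (1-\tilde\theta/\theta)^2\|x_i-x_{i-1}\|_\cX^2/(2\theta)$ and bounds $\|x_i-x_{i-1}\|_\cX \le \|x_i-x^*\|_\cX + \|x_{i-1}-x^*\|_\cX \le 2\|x_0-x^*\|_\cX$ via the triangle inequality and the Fej\'er-type monotonicity of Proposition~\ref{proposition_desigualdades}(a). This yields $\rho_k \le 2(1-\tilde\theta/\theta)^2\|x_0-x^*\|_\cX^2/\theta$ directly and uniformly in $\theta \in (0,2\theta_0]$, which is exactly what produces the constant $1+2(1-\tilde\theta/\theta)^2$. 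Your detour through Lemma~\ref{lem:monotonicity} would also work (since $\|\Delta x_1\|_\cX \le 2\|x_0-x^*\|_\cX$ by the same triangle argument), but it is an unnecessary extra step; note also that the bound $\rho_k \le \sigma(dw)_0/(1-\sigma)$ from \eqref{def:tauk} should be discarded entirely, not just at the endpoint, since it does not reproduce the stated constant.
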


\begin{proof}
The first two inclusions follow from the two inclusions in (\ref{ak}) and (\ref{bk}), relation \eqref{eq:def-bak-bbk},
Assumption B0 and  Proposition \ref{prop:transportation}.
We will now derive the equality and the two inequalities of the theorem using the fact that the relaxed
PR splitting  method with $\theta \in (0,2\theta_0]$ is an instance of the NE-HPE method.
{\color{black}{Letting $\lambda_k=1$, $\varepsilon_k = 0$ for every $k$ and $\tilde{z}_k^a$, $r_k^a$ and $\varepsilon_k^a$ be as in
\eqref{SeqErg}, we easily see from {\color{black}{Proposition \ref{pr:DR.2}(a)}} and \eqref{eq:def-bak-bbk} that
\begin{align}
\label{identity1}
r^a_k &= (0,0,\bar u_k-\bar v_k) = \gamma (0,0,\bar a_k + \bar b_k), \\ 
\label{identity2}
\varepsilon_k^a & = \frac1k \sum_{i=1}^k \inner{r_i}{\tz_i-\tz_k^a}.
\end{align}
We claim that
\beq\label{identity4}
\varepsilon_k^a \geq  \gamma (\varepsilon_k' + \varepsilon_k'').
\eeq
Before proving this claim, we will use it to complete the proof of the theorem.
Indeed, using the definition of $w$ in \eqref{eq:DR-w-def}, relations \eqref{eq:dw0-def}, \eqref{eq:norm-rel1},
\eqref{identity1} and  \eqref{identity4}, the conclusion of Proposition \ref{pr:DR.2},
and Theorem \ref{a2947} with $T=\cL_{\tilde \theta} + \gamma C$, $M=m=1/\theta$
and $\lambda_k=1$ for all $k$, we conclude that
\[
\gamma \norm{ \bar a_k + \bar b_k}_\cX = \|\bar u_k-\bar v_k\|_\cX = \|r_k^a\|_* \le \frac {2 \sqrt{2}(dw)_0^{1/2}}{\sqrt{\theta} \Lambda_k}  \le \frac{2 \|x_0-x^*\|_\cX}{k \theta}
\]
and
\beq\label{identity5}
\gamma(\varepsilon_k' + \varepsilon_k'') \leq \varepsilon_k^a \le \frac{ 3(2 (dw)_0  + \rho_k)}{\Lambda_k}
\le 
 \frac{3 \left(  \|x_0-x^*\|^2_\cX + \rho_k \right)}{k \theta}
\eeq
where $\rho_k$ is defined in \eqref{eq:def-rhok}.
Moreover, using \eqref{eq:def-rhok}, the definition of $w$ in \eqref{eq:DR-w-def},
the definition of ${x}_i$ and $\tilde{x}_i$ in \eqref{eq:DR-iter-theta} and \eqref{eq:zk-theta}, respectively,
the triangle inequality, and Proposition \ref{proposition_desigualdades}(a), we conclude that
\begin{align*}
\rho_k :=& \displaystyle\max_{i=1,\ldots,k}(dw)_{z_{i}}(\tilde z_{i}) = \displaystyle\max_{i=1,\ldots,k} \frac{\|x_{i} - \tx_i\|_\cX^2}{2\theta} \\
 = & (1- \tilde \theta/\theta)^2 \displaystyle\max_{i=1,\ldots,k} \frac{\|x_{i} - x_{i-1}\|_\cX^2}{2\theta} 
\le  \frac{2(1-\tilde \theta/\theta)^2 \|x_0-x^*\|^2_\cX}{\theta}.
\end{align*}
The inequalities of the theorem now follows from the above three relations.

In the remaining part of the proof, we establish our previous claim \eqref{identity4}.
By {\color{black}{Proposition \ref{pr:DR.2}(a) and relations \eqref{eq:DR-incl-aug} and \eqref{identity2}}}, we have
\beq\label{identity3}
k \varepsilon_k^a = \sum_{i=1}^k \inner{r_i}{\tilde{z}_i - \tilde{z}_k^a} = \sum_{i=1}^k \inner{\cL_{\tilde \theta}(\tz_i)+ \gamma c_i}{\tz_i-\tz_k^a}
\eeq
where $c_i$ is defined in \eqref{eq:def-ck}.
Moreover, {\color{black}{we have 
\begin{align}
\sum_{i=1}^k \inner{\cL_{\tilde \theta}(\tz_i)+ \gamma c_i}{\tz_i-\tz_k^a} & =
\sum_{i=1}^k \inner{\cL_{\tilde \theta}(\tz_i - \tilde{z}_k^a)}{\tz_i-\tz_k^a}  + \gamma \sum_{i=1}^{k} \inner{c_i}{\tz_i-\tz_k^a} \nonumber \\
& = (1 - \tilde \theta) \sum_{i=1}^{k} \| (u_i - \bar{u}_k) - (v_i - \bar{v}_k) \|^2_{\cX} + \gamma \sum_{i=1}^k \inner{c_i}{\tz_i-\tz_k^a} \nonumber \\
&\ge  - \frac{\gamma\beta}2  \sum_{i=1}^{k} \| (u_i - \bar{u}_k) - (v_i - \bar{v}_k) \|^2_{\cX} +  \gamma \sum_{i=1}^k \inner{c_i}{\tz_i-\tz_k^a} , \label{ident1000}
\end{align}
where the second equality follows from \eqref{Ltildethetainequality} and the definitions of $\tilde{z}_k^a$ in \eqref{SeqErg}, $\tilde{z}_k$ in \eqref{eq:zk-theta}, and $\bar{u}_k$ and $\bar{v}_k$ in \eqref{eq:def-bak-bbk},
and the inequality follows from \eqref{eq:theta0} and the fact that $\tilde \theta \le \theta_0$ in view of  \eqref{eq:tildetheta}.
}}
Finally, using the definitions of $\tilde{z}_k^a$ in \eqref{SeqErg}, and $\tilde{z}_i$ and $c_i$ in Lemma \ref{lm:DR.1},
 and the straightforward relation
\begin{eqnarray*}
- \frac{1}{2} \sum_{i=1}^{k} \| (u_i - \bar{u}_k) - (v_i - \bar{v}_k) \|^2_{\cX} \geq - \sum_{i=1}^{k} \left( \inner{u_i}{u_i - \bar{u}_k}_{\cX} + \inner{v_i}{v_i - \bar{v}_k}_{\cX}\right),
\end{eqnarray*}
we conclude from \eqref{identity3} and \eqref{ident1000}  that
\[
\varepsilon_k^a \geq \frac{\gamma}{k} \sum_{i=1}^k ( \inner{a_i - \beta u_i}{u_i-\bar u_k}_{\cX} +  \inner{b_i - \beta v_i}{v_i-\bar v_k}_{\cX} ),
\]
and hence that the claim holds in view of \eqref{def:eps'''}.
}}
\end{proof}

We now make some remarks about the convergence rate bounds obtained in Theorem \ref{thm:main}.
In view of Lemma \ref{lm:sol}, $x^*$ depends on $\gamma$ according to
\[
x^* = \gamma a^*  + u^*, \quad a^* \in A(u^*) \cap -B(u^*).
\]
Hence, letting
\begin{align*}
d_0 &:= \inf \{ \| x_0 - u^* \|_\cX : u^* \in (A+B)^{-1}(0) \}, \\
S &:= \sup \{ \| a^*\|  :  a^* \in A(u^*) \cap -B(u^*), \, u^* \in (A+B)^{-1}(0) \},
\end{align*}
and assuming that $S< \infty$, it is easy to see that Theorem \ref{thm:main} and \eqref{eq:theta0} imply that
the relaxed PR splitting method with $\theta=2\theta_0$ satisfies
\[
\norm{ \bar a_k + \bar b_k}_\cX \le \frac{C_1(\gamma)}{\gamma k}, \quad \norm{ \bar u_k - \bar v_k}_\cX  \le \frac{C_1(\gamma)}{k}, \quad
\varepsilon_k' + \varepsilon_k'' \le  \frac{C_2(\gamma)}{k}
\]
where
\[
C_1(\gamma) =  C_1(\gamma;\beta,d_0) = {\Theta} \left(\frac{d_0 + \gamma S}{1+ \beta \gamma} \right), \quad
C_2(\gamma) = C_2(\gamma;\beta,d_0) = {\Theta} \left(\frac{(d_0 + \gamma S)^2}{\gamma(1+ \beta \gamma)} \right).
\]

When $S/\beta \ge  d_0$, then $\gamma = d_0/S$
minimizes both $C_1(\cdot)$ and $C_2(\cdot)$  up to a multiplicative constant, in which case
$C_1^* ={\Theta}(d_0)$, $C_1^*/\gamma= {\Theta}(S)$ and $C_2^*={\Theta}(Sd_0)$ where
\[
C^*_1 = C^*_1(\beta,d_0)  := \inf \{ C_1(\gamma) : \gamma>0\}, \quad
C^*_2 = C^*_2(\beta,d_0) := \inf \{ C_2(\gamma) : \gamma>0\}.
\]
 Note that this case includes the case in which $\beta=0$.
On the other hand, when $S/\beta < d_0$, then
both $C_1$ and $C_2$ are minimized up to a multiplicative constant by any $\gamma \ge d_0/S$, in which case
$C_1^* ={\Theta}(S/\beta)$ and $C_2^*={\Theta}(S^2/\beta)$. Clearly, in this case, $C_1^*/\gamma$ converges to zero as
$\gamma$ tends to infinity.


Indeed, assume first that $S/\beta \ge d_0$. Then, up to some multiplicative constants, we have
\[
C_1(\gamma) \ge \frac{d_0+\gamma S}{1+\beta \gamma} \ge \frac{d_0+\gamma S}{1+S \gamma/d_0} = d_0,
\]
\[
C_2(\gamma) \ge \frac{(d_0+\gamma S)^2}{\gamma(1+\beta \gamma)} \ge \frac{(d_0+\gamma S)^2}{\gamma(1+S \gamma/d_0)}
= \frac{d_0( d_0+\gamma S)}{\gamma}  = \frac{d_0^2}{\gamma}  + Sd_0,
\]
and hence that $C_1^*= \Omega(d_0)$ and $C_2^*={\Omega}(Sd_0)$.
Moreover, if $\gamma=d_0/S$, then the assumption $S/\beta \ge d_0$  implies that $\beta \gamma \le 1$, and hence that
$C_1^* = {\Theta}(d_0)$ and $C_2^*={\Theta}(Sd_0)$.

Assume now that $S/\beta < d_0$. Then, up to multiplicative constants,  it is easy to see that
\[
C_1(\gamma) \ge \frac{d_0+\gamma S}{1+\beta \gamma} \ge \frac{S}{\beta}
\]
\[
C_2(\gamma) \ge \frac{(d_0+\gamma S)^2}{\gamma(1+\beta \gamma)} \ge
\frac{(S/\beta+\gamma S)^2}{\gamma(1+\beta \gamma)} = \frac{S^2}{\gamma\beta^2} (1+\gamma\beta),
\]
and hence that $C_1^*= \Omega(S/\beta)$ and $C_2^*={\Omega}(S^2/\beta)$.
Moreover, if $\gamma \ge d_0/S$, then it is easy to see that $C_1^*={\Theta}(S/\beta)$ and
$C_2^* = {\Theta}(S^2/\beta)$.

Based on the above discussion, the choice $\gamma=d_0/S$ is optimal but has the disadvantage that
$d_0$ is generally difficult to compute. One possibility around this difficulty is  to use
$\gamma = D_0/S$ where $D_0$ is an upper bound on $d_0$.

{\color{black}{

\section{On the convergence of the relaxed PR splitting method }\label{discussions}

This section  discusses some new convergence results about the sequence generated by the relaxed PR splitting method
for the case in which $\beta>0$. It contains two subsections.
As observed in the Introduction, \cite{Dong} already establishes the convergence
of the relaxed PR sequence for the case in which $\beta \geq 0$ and $\theta<2\theta_0$.
The first subsection establishes convergence
of the relaxed PR sequence for the case in which $\beta>0$ and $\theta=2\theta_0$.
The second subsection describes an instance showing that the relaxed
PR spliting method may diverge when
$\beta \ge 0$ and $\theta \geq \min \{ 2(1+\gamma \beta), 2 + \gamma \beta + 1/(\gamma \beta) \}$. (Here,
we assume that $1/0=\infty$.) 
{\color{black}{Note that this instance, specialized to the case $\beta = 0$,
shows that the sequence $\{ z_k = (u_k, v_k, x_k) \}$ generated by the relaxed PR splitting method with $\beta=0$ may diverge
for any $\theta \ge 2$, and hence that the convergence result obtained  for any $\theta \in (0,2)$ in \cite{Dong} cannot be improved.
}}

\subsection{Convergence result about the relaxed PR sequence}\label{subsection1}

It is known that the sequence $\{z_k = (u_k, v_k, x_k)\}$ generated by the relaxed PR splitting method with
$\theta \in (0,2\theta_0)$ and $\beta \geq 0$ converges \cite{Dong}.
The main result of this subsection, namely Theorem \ref{prop:boundednessxk}, establishes convergence of this sequence
for $\theta = 2 \theta_0$ when $\beta > 0$. 

We start by giving a lemma which is used in the proof of Theorem \ref{prop:boundednessxk}.

\begin{lemma} \label{lm:boundedness}
Consider the sequence $\{z_k = (u_k, v_k, x_k)\}$ generated by the relaxed PR splitting method with $\theta \in (0,2\theta_0]$
and the sequence $\{\tilde z_k=(u_k,v_k,\tilde x_k)\}$ defined in \eqref{eq:zk-theta}. Then,
the sequences $\{ z_k \}$ and $\{ \tilde{z}_k \}$ are bounded.
\end{lemma}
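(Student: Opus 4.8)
The plan is to use the NE-HPE machinery already established, specifically the Fej\'er-type monotonicity of the Bregman distance to the solution set. The key observation is that Proposition \ref{pr:DR.2} identifies the relaxed PR splitting method with $\theta \in (0,2\theta_0]$ as an NE-HPE instance for the inclusion $0 \in (\cL_{\tilde\theta}+\gamma C)(z)$, with $\lambda_k=1$, $\varepsilon_k=0$, and $\sigma = (\theta/\tilde\theta-1)^2 \le 1$. Thus Proposition \ref{proposition_desigualdades}(a) applies and tells us that for any $z^* = (u^*,u^*,x^*) \in (\cL_{\tilde\theta}+\gamma C)^{-1}(0)$, the sequence $\{(dw)_{z_k}(z^*)\}$ is non-increasing. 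Since the distance generating function $w$ in \eqref{eq:DR-w-def} satisfies $(dw)_{z_k}(z^*) = \|x_k - x^*\|_\cX^2/(2\theta)$, this immediately yields that $\|x_k - x^*\|_\cX \le \|x_0 - x^*\|_\cX$, so the sequence $\{x_k\}$ is bounded.

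Having bounded $\{x_k\}$, the next step is to bound $\{u_k\}$ and $\{v_k\}$. First I would recall that by \eqref{eq:uv} and \eqref{eq:DR-iter-theta}, $u_k = J_{\gamma A}(x_{k-1})$ and $v_k = J_{\gamma B}(2u_k - x_{k-1})$. Since $A$ and $B$ are maximal $\beta$-strongly monotone with $\beta \ge 0$, the resolvents $J_{\gamma A}$ and $J_{\gamma B}$ are (single-valued and) nonexpansive (in fact, when $\beta>0$ they are contractions with factor $1/(1+\gamma\beta)$, but nonexpansiveness suffices). Fixing any solution $z^* = (u^*,u^*,x^*)$ with $u^* = J_{\gamma A}(x^*)$ by Lemma \ref{lm:sol}, the bound $\|u_k - u^*\|_\cX = \|J_{\gamma A}(x_{k-1}) - J_{\gamma A}(x^*)\|_\cX \le \|x_{k-1} - x^*\|_\cX$ shows $\{u_k\}$ is bounded. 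A similar nonexpansiveness estimate for $v_k = J_{\gamma B}(2u_k - x_{k-1})$, combined with the already-established boundedness of $\{u_k\}$ and $\{x_{k-1}\}$, shows $\{v_k\}$ is bounded. This establishes boundedness of $\{z_k = (u_k,v_k,x_k)\}$.

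For the tilde sequence, I would use the defining relation $\tilde x_k = x_k^{\tilde\theta} = x_{k-1} + \tilde\theta(v_k - u_k)$ from \eqref{eq:zk-theta}. Since $\{u_k\}$, $\{v_k\}$, and $\{x_{k-1}\}$ are all bounded and $\tilde\theta$ is a fixed constant, $\{\tilde x_k\}$ is bounded, and hence $\{\tilde z_k = (u_k, v_k, \tilde x_k)\}$ is bounded. The only mild subtlety is to be careful that the boundedness of $\{x_k\}$ is taken with respect to the genuine norm $\|\cdot\|_\cX$ on $\cX$, not the degenerate seminorm $\|(u,v,x)\| = \|x\|_\cX$ on $\cZ$; but since $(dw)_{z_k}(z^*)$ directly controls $\|x_k - x^*\|_\cX^2$ via \eqref{eq:DR-w-def}, this is immediate and no seminorm degeneracy issue arises for the $x$-component.

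The main obstacle is not in bounding $\{x_k\}$—that is a direct consequence of Proposition \ref{proposition_desigualdades}(a) applied through the $w$ of \eqref{eq:DR-w-def}—but rather in bounding $\{u_k\}$ and $\{v_k\}$, since the Bregman distance $(dw)$ is degenerate in those coordinates and therefore provides no control over them. The resolution, as sketched above, is to step outside the HPE framework momentarily and invoke the nonexpansiveness of the resolvents $J_{\gamma A}$ and $J_{\gamma B}$ (which holds because $A,B$ are maximal monotone), thereby transferring the boundedness from $\{x_k\}$ to $\{u_k\}$ and $\{v_k\}$ directly.
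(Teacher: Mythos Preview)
Your proposal is correct and follows essentially the same approach as the paper's own proof: invoke Proposition~\ref{pr:DR.2} to view the method as an NE-HPE instance with $\sigma\le 1$, apply Proposition~\ref{proposition_desigualdades}(a) to deduce that $\{(dw)_{z_k}(z^*)\}$ is non-increasing and hence $\{x_k\}$ is bounded, then use nonexpansiveness of $J_{\gamma A}$ and $J_{\gamma B}$ together with \eqref{eq:uv} to bound $\{u_k\}$ and $\{v_k\}$, and finally bound $\{\tilde x_k\}$ via its definition in \eqref{eq:zk-theta}. Your additional remarks about the seminorm degeneracy and the explicit anchoring via $u^*=J_{\gamma A}(x^*)$ from Lemma~\ref{lm:sol} are correct refinements but not needed beyond what the paper already does.
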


\begin{proof}
The assumption that $\theta \in (0, 2\theta_0]$ together with the last conclusion of Proposition \ref{pr:DR.2} imply that
the relaxed PR splitting method is an NE-HPE instance with $\sigma \leq 1$.
 Hence, for any $z^* \in (\cL_{\tilde \theta}+\gamma C)^{-1}(0)$, it follows from
Proposition \ref{proposition_desigualdades}(a) that the sequence $\{(dw)_{z_k}(z^*)\}$ is non-increasing where
$w$ is the distance generating function given by \eqref{eq:DR-w-def}. Clearly, this observation implies that
$\{x_k\}$ is bounded. This conclusion together with  (\ref{eq:uv}) and the nonexpansiveness of
$J_{\gamma A}, J_{\gamma B}$ imply that $\{ u_k \}$ and $\{ v_k \}$ are also bounded.  Finally, $\{ \tilde{x}_k \}$ is bounded
due to the definition of $\tilde x_k$ in \eqref{eq:zk-theta}, and the boundedness of $\{x_k\}$, $\{ u_k \}$ and $\{ v_k \}$.
\end{proof}

As mentioned at the beginning of this subsection, the convergence of $\{(u_k,v_k)\}$ to some pair $(u^*,u^*)$
where $u^* \in (A+B)^{-1}(0)$ has been established in \cite{Dong} for the case in which
$\beta > 0$ and $\theta < 2 \theta_0$. The following result shows that the latter conclusion can also be
extended to $\theta = 2 \theta_0$.


\begin{theorem} \label{prop:boundednessxk}
In addition to Assumption B1, assume that Assumption B0 holds with $\beta>0$. Then, the sequence
$\{z_k = (u_k, v_k, x_k)\}$ generated by the relaxed PR splitting method with $\theta =2\theta_0$
converges  to some point lying in $ (\cL_{\theta_0}+\gamma C)^{-1}(0)$.
\end{theorem}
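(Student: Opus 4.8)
The plan is to exploit the fact, established in Proposition~\ref{pr:DR.2}, that for $\theta=2\theta_0$ the relaxed PR method is an NE-HPE instance for $0\in(\cL_{\theta_0}+\gamma C)(z)$ with $\tilde\theta=\theta_0$, $\sigma=1$, $\lambda_k=1$ and $\varepsilon_k=0$, and to combine the resulting Fej\'er monotonicity with a standard Opial argument. Fix a solution $z^*=(u^*,u^*,x^*)\in(\cL_{\theta_0}+\gamma C)^{-1}(0)$, which exists by B1 and Lemma~\ref{lm:sol}. By Lemma~\ref{lm:boundedness} the sequence $\{z_k\}$ is bounded, and by Proposition~\ref{proposition_desigualdades}(a) the quantity $(dw)_{z_k}(z^*)=\|x_k-x^*\|_\cX^2/(2\theta)$ is non-increasing; in fact $\{x_k\}$ is Fej\'er monotone with respect to the whole set $X^*:=\{x:(u,u,x)\in(\cL_{\theta_0}+\gamma C)^{-1}(0)\}$. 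I claim it therefore suffices to prove $u_k-v_k\to0$. Indeed, since $\|x_k-x_{k-1}\|_\cX=\theta\|u_k-v_k\|_\cX$, this gives $\|x_k-x_{k-1}\|_\cX\to0$, so for any cluster point $\hat x$ of $\{x_k\}$ (along a subsequence $x_{k_j}\to\hat x$, whence also $x_{k_j-1}\to\hat x$) continuity of the resolvents yields $u_{k_j}\to\hat u:=J_{\gamma A}(\hat x)$ and $v_{k_j}\to J_{\gamma B}(2\hat u-\hat x)$, while $u_k-v_k\to0$ forces these two limits to coincide, i.e.\ $\hat x\in X^*$; the Opial/Fej\'er lemma then upgrades this to $x_k\to\hat x$ for a single $\hat x\in X^*$, and hence $z_k\to(\hat u,\hat u,\hat x)\in(\cL_{\theta_0}+\gamma C)^{-1}(0)$.

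Two estimates prepare the ground. First, feeding the strong-monotonicity inequality \eqref{Ltildethetainequality'} of Proposition~\ref{prop:etarelax}(a) (applied with $r=r_k$, $r'=0$, $z=\tz_k$, $z'=z^*$ and $\tilde\theta=\theta_0=1+\gamma\beta/2$) into the basic NE-HPE relation \eqref{eq:des2} specialized to $\sigma=1$, $\lambda_k=1$, $\varepsilon_k=0$, a short computation using $v^*=u^*$ rewrites the lower bound as $\tfrac{\gamma\beta}{2}\|u_k+v_k-2u^*\|_\cX^2$; thus $(dw)_{z_{k-1}}(z^*)-(dw)_{z_k}(z^*)\ge\tfrac{\gamma\beta}{2}\|u_k+v_k-2u^*\|_\cX^2$, and telescoping yields $\sum_k\|u_k+v_k-2u^*\|_\cX^2<\infty$, so that $u_k+v_k\to2u^*$. (Here $\beta>0$ is essential: for $\beta=0$ this bound is vacuous, consistent with the divergence described in Subsection~\ref{subsection2}.) Second, Lemma~\ref{lem:monotonicity} shows that $\|u_k-v_k\|_\cX=\theta^{-1}\|\Delta x_k\|_\cX$ is non-increasing, hence convergent to some $L\ge0$. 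The theorem is thereby reduced to showing $L=0$.

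The crux --- and the step I expect to be the main obstacle --- is that at the borderline value $\theta=2\theta_0$ the energy inequality is exactly blind to $u_k-v_k$, the relevant $\|u_k-v_k\|_\cX^2$ terms cancelling, so $L=0$ cannot be read off from it directly. I would instead argue by contradiction through a steady-state orbit. Assume $L>0$. Using boundedness, pass to a subsequence with $x_{k_j-1}\to\bar x$; writing $G(x):=x+\theta(J_{\gamma B}(2J_{\gamma A}(x)-x)-J_{\gamma A}(x))$ for the PR map, continuity gives $x_{k_j+n-1}\to\bar x_n:=G^n(\bar x)$ for every fixed $n\ge0$. Setting $u^{(n)}:=J_{\gamma A}(\bar x_n)$ and $v^{(n)}:=J_{\gamma B}(2u^{(n)}-\bar x_n)$ and evaluating the global limits $u_k+v_k\to2u^*$ and $\|u_k-v_k\|_\cX\to L$ along the shifted subsequence $k=k_j+n$, one obtains for every $n$ the steady-state relations $u^{(n)}+v^{(n)}=2u^*$ and $\|u^{(n)}-v^{(n)}\|_\cX=L$. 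Writing $p_n:=(u^{(n)}-v^{(n)})/2$, so that $\|p_n\|_\cX=L/2$, $u^{(n)}=u^*+p_n$ and $v^{(n)}=u^*-p_n$, and $w_n:=\bar x_n-x^*$, the $\beta$-strong monotonicity of $B$ at the pairs $(v^{(n)},b^{(n)})$ and $(u^*,-a^*)\in{\rm Gr}(B)$, where $\gamma b^{(n)}=2u^{(n)}-v^{(n)}-\bar x_n$ and $\gamma a^*=x^*-u^*$, gives after substitution $\langle w_n,p_n\rangle_\cX\ge(3+\gamma\beta)\|p_n\|_\cX^2$. Since the orbit update reads $w_{n+1}=w_n-2\theta p_n$ and $\theta=2+\gamma\beta$, expanding $\|w_{n+1}\|_\cX^2$ and using $4\theta^2-4\theta(3+\gamma\beta)=-4\theta$ produces the clean per-step decrease $\|w_{n+1}\|_\cX^2\le\|w_n\|_\cX^2-\theta L^2$; iterating forces $\|w_n\|_\cX^2\to-\infty$, a contradiction. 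Hence $L=0$, and the theorem follows from the reduction of the first paragraph.
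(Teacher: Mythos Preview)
Your argument is correct, and it shares the overall skeleton with the paper's proof: both exploit Proposition~\ref{pr:DR.2} to cast $\theta=2\theta_0$ as a $\sigma=1$ NE-HPE instance, both obtain $u_k+v_k\to 2u^*$ from the lower bound $\langle r_k,\tz_k-z^*\rangle\ge\tfrac{\gamma\beta}{2}\|u_k+v_k-2u^*\|_\cX^2$ (your parallelogram computation is exactly the identity~\eqref{eqn2} in the paper), and both finish via $\beta$-strong monotonicity of $B$ together with a Fej\'er/Opial argument on $\{x_k\}$.

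Where the two diverge is in the key step showing $u_k-v_k\to 0$. The paper works at a \emph{single} cluster point: passing to the limit in $\langle r_k,\tz_k-z^*\rangle\to 0$ yields the scalar identity $\langle \bar u-\bar v,\bar x-x^*\rangle_\cX=\theta_0\|\bar u-\bar v\|_\cX^2$, and one application of $\beta$-strong monotonicity of $B$ at $(\bar v,\bar b)$ versus $(u^*,-a^*)$ then forces $\bar u=\bar v$ directly. You instead bring in the extra ingredient of Lemma~\ref{lem:monotonicity} to get monotone convergence $\|u_k-v_k\|_\cX\downarrow L$, build an entire limiting orbit $\{\bar x_n\}$ on which the steady-state relations hold, and use strong monotonicity of $B$ at each orbit point to produce the strict per-step drop $\|w_{n+1}\|_\cX^2\le\|w_n\|_\cX^2-\theta L^2$. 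Your route is a bit heavier (it needs Lemma~\ref{lem:monotonicity} and the orbit construction, neither of which the paper's proof uses), but the resulting quantitative decrease $-\theta L^2$ is a nice by-product, and the algebraic check $4\theta^2-4\theta(3+\gamma\beta)=-4\theta$ at $\theta=2+\gamma\beta$ is clean. It is also worth noting that your inequality $\langle w_n,p_n\rangle_\cX\ge(3+\gamma\beta)\|p_n\|_\cX^2$, specialized to $n=0$, already recovers the paper's single-point argument once combined with the identity $\langle w_0,p_0\rangle_\cX=\theta_0\|p_0\|_\cX^2$ coming from $\langle r_k,\tz_k-z^*\rangle\to 0$; so the orbit machinery can in fact be bypassed.
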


\begin{proof}
We assume that $\theta =2\theta_0$ and without any loss of generality that $\gamma=1$.
In view of \eqref{eq:tildetheta},  we have $\tilde{\theta} = \theta_0$.  
Let $z^\ast \in  (\cL_{\theta_0} + C)^{-1}(0)$.
Then, by Lemma \ref{lm:sol}, we have
$z^\ast = (u^\ast, u^\ast, x^\ast)$ where
\beq \label{eq:eqn1}
u^\ast = (A+B)^{-1}(0), \quad x^\ast - u^\ast \in A(u^\ast), \quad -x^\ast + u^\ast \in B(u^\ast).
\eeq
{\color{black}{Since $\tilde{\theta} = \theta_0$, it follows from {\color{black}{Proposition \ref{pr:DR.2}(b)}} that
$r_k \in (\cL_{\theta_0} + C)(\tilde{z}_k)$. 
This together with the fact that $0 \in (\cL_{\theta_0} + C)(z^\ast)$, inequality \eqref{Ltildethetainequality'} with
$(z,r)=(\tilde z_k,r_k)$ and $(z',r')=(z^*,0)$, and the fact that $\theta_0 = 1 + \beta/2$, then imply that
\begin{align}
\langle r_k, \tilde{z}_k - z^\ast \rangle & 
\geq (1 - \theta_0) \| (u_k - u^\ast) - (v_k - u^\ast)  \|_{\cX}^2 + \beta (\|u_k - u^\ast \|^2_{\cX} + \| v_k - u^\ast \|^2_{\cX} ) \nonumber \\ 
& = \frac{\beta}{2} \| u_k + v_k - 2u^\ast \|^2_{\cX} \geq 0. \label{eqn2}
\end{align}
}}
{\color{black}{Since the last conclusion of Proposition \ref{pr:DR.2} states that the relaxed PR splitting method with $\theta=2\theta_0$ is
an NE-HPE instance with respect to the monotone inclusion $0 \in (\cL_{\tilde \theta}+\gamma C)(z)$ in which
$\sigma=1$, $\lambda_k = 1$ and $ \varepsilon_k = 0$ for every $k$, it follows from Proposition \ref{proposition_desigualdades}(b), \eqref{eqn2} and the assumption that $\beta>0$ that
\beq
\lim_{k \rightarrow \infty} \langle r_{k}, \tilde{z}_{k} - z^\ast \rangle =
\lim_{k \rightarrow \infty} \| u_{k} + v_{k} - 2u^\ast \|_\cX = 0. \label{limit1}
\eeq
By Lemma \ref{lm:boundedness}, $\{z_k\}$, and hence $\{x_k\}$, is bounded.  Therefore, }}there exist an infinite index set ${\cal K}$ and $\bar x \in {\cX}$ such that
$\lim_{k \in {\cal K}} x_{k-1} = \bar x$,
from which we conclude that
\beq \label{eq:defuvbar}
\lim_{k \in {\cal K}} u_k = \bar u := J_A(\bar x) , \quad
\lim_{k \in {\cal K}} v_k = \bar v := J_B( 2 \bar u - \bar x)
\eeq
in view of  \eqref{eq:uv}, \eqref{eq:DR-iter-theta} and the continuity of the point-to-point maps $J_A$ and $J_B$.
Clearly, relations \eqref{eq:DR-iter-theta}, \eqref{limit1} and \eqref{eq:defuvbar},  {\color{black}{Proposition \ref{pr:DR.2}(a)}}, the definitions of
$J_B$ following \eqref{eq:seqqq} and $\tz_k$ in \eqref{eq:zk-theta}, and the fact that $\tilde{\theta} = \theta_0$,
imply that
\begin{align}
 \label{eq:notob}
& \bar u + \bar v = 2 u^*, \quad 2 \bar u - \bar v - \bar x \in B(\bar v), \quad
\lim_{k \in \cK}  z_k = (\bar u, \bar v, \bar x+ \theta (\bar v-\bar u)), \\
& \lim_{k \in \cK} r_k= (0,0,\bar u-\bar v), \quad
\lim_{k \in \cK}  \tilde z_k = \tz := (\bar u, \bar v, \bar x+ \theta_0 (\bar v-\bar u)). \label{notob1}
\end{align}
Clearly, \eqref{limit1} and \eqref{notob1} imply that
\beq \label{notob2}
0 =  \lim_{k \in \cK} \langle r_{k}, \tilde{z}_{k} - z^\ast \rangle =
\langle \bar u - \bar v,  \bar x+ \theta_0 (\bar v-\bar u) - x^\ast \rangle_{\cX} =
- \theta_0 \| \bar u- \bar v\|^2_{\cX} + \langle \bar u - \bar v , \bar x - x^* \rangle_{\cX}.
\eeq
Using the second inclusion in \eqref{eq:eqn1}, the identity and the inclusion in \eqref{eq:notob},
the $\beta$-strong monotonicity of $B$, and relation \eqref{notob2},
we then conclude that
\begin{align*}
\frac{\beta}4 \| \bar v - \bar u\|^2_{\cX} &= \beta \| \bar v - u^\ast \|^2_{\cX} \le
\left\langle (2 \bar u - \bar v - \bar x) - ( u^\ast - x^\ast), \bar v - u^\ast  \right \rangle_{\cX}
= \frac12 \left \langle  \frac32 (\bar u - \bar v) - \bar x + x^\ast,  \bar v - \bar u \right \rangle_{\cX} \\
&= \frac12 \left \langle  \bar x - x^\ast,  \bar u - \bar v \right \rangle_{\cX} - \frac34 \| \bar v - \bar u\|^2_{\cX}
= \left(  \frac{\theta_0}2 - \frac34 \right) \| \bar v - \bar u\|^2_{\cX}.
\end{align*}
The latter inequality together with the fact that $\theta_0=1+(\beta/2)$ then
imply that $\bar u=\bar v = u^*$ where the last equality is due to
the identity in \eqref{eq:notob}.
We have thus shown that $\{u_k\}_{k \in \cK}$ and $\{v_k\}_{k \in \cK}$ both converge to  $u^* = (A+B)^{-1}(0)$.  Since $\bar u=\bar v=u^*$, it follows from \eqref{eq:notob} and
\eqref{notob1} that
\begin{align*}
\lim_{k \in \cK}r_k=0, \quad \lim_{k \in \cK} z_k = \lim_{k \in \cK} \tz_k = \tz = (u^*,u^*,\bar x), \\
\lim_{k \in \cK} (dw)_{z_k}(\tz_k) = \lim_{k \in \cK} \|x_k-\tx_k\|_\cX^2/(2\theta) = \|\bar x-\bar x\|_\cX^2/(2\theta)=0.
\end{align*}
Hence,  Proposition \ref{lm:convergence} with
$T=(\cL_{\theta_0} + \gamma C)$ implies that $\tz \in ({\cal L}_{\theta_0} + \gamma C)^{-1}(0)$ and
$0=\lim_{k \to \infty} (dw)_{z_k}(\tz) = \lim_{k \to \infty} \|\bar x-x_k\|^2_\cX/(2\theta)$.
We thus conclude that $\{z_k\}$ converges to $(u^*,u^*, \bar x)=\tz \in ({\cal L}_{\theta_0} + \gamma C)^{-1}(0)$.
\end{proof}


Before ending this subsection, we make two remarks.
First, for a fixed $\tau >0$, consider the set 
\[
R(\tau):= \{ (\beta,\theta) \in \R^2 : \beta>0, \, 0 < \theta \le 2 + \tau \beta \}.
\]
Then, it follows from Theorem \ref{prop:boundednessxk} and the observation in the paragraph preceding it
that the sequence generated by the relaxed PR splitting method {\color{black}{with relaxation parameter $\theta$ to solve \eqref{eq:mainincl} with $A, B$ maximal $\beta$-strongly monotone}} converges for any $(\beta,\theta) \in R(1)$.
Second, it follows from the example presented in the next subsection that
the above conclusion fails if $R(1)$ is enlarged to the region $R(\tau)$ for any  $\tau>1$.

}}

{\color{black}{

\subsection{Non-convergent instances for $\theta \ge \min \{ 2+2\gamma \beta, 2 + \gamma \beta + 1/(\gamma \beta) \}$ }\label{subsection2}

By  \cite{Dong} and Theorem \ref{prop:boundednessxk}, the sequence $\{ x_k \}$
generated by the relaxed PR splitting  method converges whenever either
$\theta \in (0, 2 + \gamma \beta)$ or $\theta = 2 + \gamma \beta$ and $\beta > 0$.
This subsection gives an instance of \eqref{eq:mainincl}, {\color{black}{where $A, B$ are maximal $\beta$-strongly monotone}}, for which the sequence $\{x_k\}$ generated by the  relaxed PR splitting  method {\color{black}{with relaxation parameter $\theta$}} does not converge
when $\beta \ge 0$ and $\theta \geq \min \{ 2(1+\gamma \beta), 2 + \gamma \beta + 1/(\gamma \beta) \}$.  

Recall from (\ref{eq:uv}) and (\ref{eq:DR-iter-theta}) that the relaxed PR splitting  method iterates as
\begin{eqnarray}\label{PR2}
x_{k+1} = x_{k} + \theta (J_{\gamma B}(2J_{\gamma A}(x_{k}) - x_{k}) - J_{\gamma A}(x_{k}))
\end{eqnarray}
where $\theta > 0$.  Without any loss of generality, we assume that $\gamma = 1$ in (\ref{PR2}).

We now describe our instance. First, let $\cX := \mathcal{\tilde{X}} \times \mathcal{\tilde{X}}$ where $\mathcal{\tilde{X}}$ is a finite-dimensional real vector space,
and let $A_0, B_0: \cX \tos \cX$ be defined as
\[
A_0(\tx_1,\tx_2) = (0,0), \quad B_0(\tx_1,\tx_2)= N_{\{0\}}(\tx_1) \times \{0\}, \quad
\forall x=(\tx_1,\tx_2) \in \mathcal{\tilde{X}} \times \mathcal{\tilde{X}}
\]
where $N_{\{0\}}(\cdot)$ denotes the normal cone operator of the set $\{0\}$.
Clearly, $A_0$ and $B_0$ are both maximal monotone operators and
\[
J_{A_0}(\tilde{x}_1, \tilde{x}_2) = (\tilde{x}_1,\tilde{x}_2), \quad
J_{B_0}(\tilde{x}_1,\tilde{x}_2) = (0,\tilde{x}_2), \quad \forall x=(\tilde{x}_1, \tilde{x}_2) \in \mathcal{\tilde{X}} \times \mathcal{\tilde{X}}.
\]
Now define
  $A := A_0 + \beta I$ and $B:=B_0+\bar{\beta} I$ where $\bar{\beta} \geq \beta \ge 0$.
It follows that $A$ is a $\beta$-strongly maximal monotone operator and $B$ is a $\bar{\beta}$-strongly maximal monotone operator.
Hence, the instance we are describing is slightly more general in that
$A$ and $B$ have different  strong monotonicity parameters.
Moreover, for any $x =({\tx}_1,{\tx}_2) \in \cX$, it is easy to see that
\begin{eqnarray*}
& & J_A(x) = J_{\frac{1}{1+\beta}A_0}\left(\frac{1}{1 + \beta}({\tx}_1,{\tx}_2)\right) = \frac{1}{1 + \beta}({\tx}_1,{\tx}_2),\\
& & J_B(x) = J_{\frac{1}{1+\beta}B_0}\left(\frac{1}{1 + \bar{\beta}}({\tx}_1,{\tx}_2)\right) = \frac{1}{1+\bar{\beta}} (0, {\tx}_2).
\end{eqnarray*}
and hence that
\begin{eqnarray}\label{relaxedPRequality}
x + \theta [J_{B}(2J_{A}(x) - x) - J_{A}(x)] = \left( \frac{(1 + \beta - \theta)}{1 + \beta}\tilde{x}_1,  \left[1 - \frac{(\beta + \bar{\beta}) \theta}{(1+\beta)(1+ \bar{\beta})}\right] \tilde{x}_2 \right).
\end{eqnarray}

From \eqref{PR2} and (\ref{relaxedPRequality}), we easily see that  the sequence $\{x_k\}$ generated by the relaxed 
PR splitting method diverges whenever
\begin{eqnarray*}
\frac{(1 + \beta - \theta)}{1 + \beta} \leq -1, \ \ \mbox{\rm or}
\ \ 1 - \frac{(\beta + \bar{\beta}) \theta}{(1+\beta)(1+ \bar{\beta})} \le -1,
\end{eqnarray*}
or equivalently, whenever
\[
\theta \geq \min \left\{2(1+\beta) \,,\, 2 + \frac{2(1+\beta\bar{\beta})}{\beta + \bar{\beta}} \right\}.
\]
Note that when $\beta=\bar \beta$, the above inequality reduces to $\theta \geq \min \{2(1+\beta), 2 + \beta + 1/\beta \}$.

Before ending this subsection, we make two  remarks.
First, when $\beta=0$ and hence $A$ is not strongly monotone, the sequence $\{ x_k \}$ for the above example diverges for any $\theta \ge 2$ even if
$B$ is strongly monotone, i.e., $\bar \beta>0$.
Second, the above example specialized to the case in which $\beta=\bar \beta$ easily shows that the sequence generated by
the relaxed PR splitting method does not necessarily converge for any $(\beta,\theta) \in R(\tau)$ if $\tau>1$
where $R(\tau)$ is defined at the end of Subsection \ref{subsection1}.

}}

{\color{black}{
\section{Numerical study}\label{numerical}

This section illustrates the behavior of the relaxed PR splitting method for
solving the weighted Lasso minimization problem \cite{Giselsson} (see also \cite{Candes})
\begin{eqnarray}\label{Lasso}
\min_{u \in \R^n} f(u) + g(u),
\end{eqnarray}
where $f(u) := \frac{1}{2} \| Cu - b \|_{\cX}^2$ and $g(u) := \| Wu \|_1$  for every $u \in \R^n$.
Our numerical experiments consider instances where
$n = 200$, $b \in \R^{300}$ and $C \in \R^{300 \times 200}$ is a sparse data matrix with an average of $10$ nonzero entries per row.  Each component of $b$ and each nonzero element of $C$ is drawn from a Gaussian distribution with zero mean and unit variance,
while $W \in \R^{200 \times 200}$ is a diagonal matrix with positive diagonal elements drawn from a uniform distribution on the interval
$[0,1]$.
This setup follows that of \cite{Giselsson}.  Note that $\cX = \R^{300}$ and $\| \cdot \|_1$ is the 1-norm on $\R^{200}$.
Observe that $f$ is $\alpha$-strongly convex\footnote{A function $f$ is $\alpha$-strongly convex on
$\cX$ if $f - \frac{\alpha}{2} \| \cdot \|_{\cX}^2$ is convex on $\cX$.} on $\R^{200}$ where $\alpha = \lambda_{\min}(C^TC)$
is the minimum eigenvalue of $C^T C$.
Also, $f$ is differentiable and its gradient is $\kappa$-Lipschitz continuous 
on $\R^{200}$ where $\kappa = \lambda_{\max}(C^TC)$ is the maximum eigenvalue of $C^TC$.  The function $g$ is clearly convex on $\R^{200}$.


We consider solving \eqref{Lasso} by apply the relaxed PR splitting method (\ref{PR2}) to solve
the inclusion \eqref{eq:mainincl} with
\begin{eqnarray}
A = \partial f - \alpha^\prime I, \quad B = \partial g + \alpha^\prime I, \label{eq:parti}
\end{eqnarray}
where $0 \leq \alpha^\prime \leq \alpha = \lambda_{\min}(C^TC)$.
Since $A$ (resp., $B$) is $(\alpha-\alpha')$-strongly (resp., $\alpha'$-strongly) maximal monotone,
the results developed in Sections \ref{sec:PRmethod} and \ref{discussions} for the relaxed PR splitting method with $(A,B)$ as above applies with
$\beta  = \min \{ \alpha - \alpha^\prime, \alpha^\prime \}$.
Our goal in this section is to gain some intuition of how the relaxed PR splitting method performs as
$\alpha^\prime$ (and hence $\beta$), $\gamma$ and $\theta$ change.
In our numerical experiments, we start  the relaxed PR splitting algorithm with $x_0 = 0$ and terminate it
when $\| x_{k+1} - x_{k} \|_{\cX} \leq 10^{-5}$.
The paragraphs below report the results of  three experiments.

In the first experiment, we generate 100 random instances of $(C,W,b)$ and we observed that the condition $\lambda_{\min}(C^T C) > 0$
holds for all instances.  The relaxed
PR splitting method is used to solve these instances of \eqref{Lasso} for various values of $\theta$ and with the pair
$(\gamma,\alpha')$ taking on the values $(1,0)$, $(1,\alpha/2)$, $(1/\sqrt{\alpha\kappa},0)$ and $(1/\sqrt{\alpha\kappa},\alpha/2)$.
Note that it follows from Proposition $3$ of \cite{Giselsson} that when $\alpha'=0$ and $\theta=2$,
the choice of $\gamma =1/\sqrt{\alpha\kappa}$ has been shown to be optimal for the
relaxed PR splitting method.
 Our results are shown in Table \ref{table3}.
\begin{table}[htpb]
\centering
\begin{tabular}{||c|c|c||c|c||}  \hline
        & \multicolumn{4}{|c||}{Average Number of Iterations} \\ \cline{2-5}
 $\theta$         & \multicolumn{2}{|c||}{$\gamma = 1$} & \multicolumn{2}{|c||}{$\gamma = 1/\sqrt{\alpha \kappa}$} \\ \cline{2-5}
          & $\alpha^\prime = 0$ & $\alpha^\prime = \alpha/2$   & $\alpha^\prime = 0$ & $\alpha^\prime = \alpha/2$  \\ \hline \hline
  $1$       &   $141.79$              &  $140.64$      & $60.10$   &  $60.11$                \\ \hline
  $1.25$    &   $115.96$              &  $115.06$      & $48.47$   &  $48.48$               \\ \hline
  $1.50$    &   $98.31$               &  $97.48$       & $40.51$   &  $40.49$               \\ \hline
  $1.75$    &   $85.33$               &  $84.64$       & $34.67$   &  $34.70$               \\ \hline
  $2$       &   $264.80$              &  $75.08$       & $58.54$   &  $42.11$             \\ \hline
  $2 + \gamma \alpha/2$    &  $> 500$    &  $73.25$     & $74.73$   &  $49.60$            \\ \hline
\end{tabular}
\caption{Average number of iterations performed by the relaxed PR splitting method \eqref{PR2} to solve \eqref{Lasso} based on
the partition $(A,B)$ given by \eqref{eq:parti} for 4 pairs $(\gamma,\alpha')$ and 6 different values of $\theta$.}\label{table3}
\end{table}
We see from the table that, except when $\theta = 2$ and $\theta = 2 + \gamma \alpha/2$, the average number of iterations for
$\alpha^\prime = 0$ and $\alpha^\prime = \alpha/2$ are similar.
However, when $\theta = 2$ and $\theta = 2 + \gamma \alpha/2$, the choice $\alpha^\prime = \alpha/2$ outperforms the
one with $\alpha^\prime = 0$.
One possible explanation for this behavior is due to the fact that when $\theta = 2$ and $\theta = 2 + \gamma \alpha/2$, the relaxed PR sequence
converges when both operators are strongly monotone, while it does not necessary converge when either one of the operators is only monotone.  Note also that
the results in the last row of the table confirm the convergence result of the relaxed PR splitting method
(see Theorem \ref{prop:boundednessxk})
for the case in which $A$ and $B$ are $\beta$-strongly maximal monotone operators with $\beta > 0$ and
$\theta = 2 + \gamma \beta$.  Finally, our results (the last two rows of table) suggest that, if $A$ is maximal $\alpha$-strongly monotone and $B$ is only maximal monotone, it might be advantageous to use the relaxed PR splitting method  with
$0 < \alpha^\prime < \alpha$ (and hence $\beta>0$) instead of $\alpha'= 0$ (and hence $\beta=0$).

In our second experiment, we use relaxed PR splitting method with $(\theta,\gamma)$ equal to $(2,1)$ and $(2,1/\sqrt{\alpha\kappa})$,
and with $\alpha^\prime$ varying from $0$ to $\alpha$,
to solve \eqref{Lasso} for a randomly generated $(C,W,b)$.
In this instance, $\alpha = \lambda_{\min}(C^T C) = 0.3792$ and $\kappa = \lambda_{\max}(C^T C) = 57.6624$.
Our results are shown in Figure \ref{figure1}.
\begin{figure}[htpb]
\centering
\includegraphics[width=8cm]{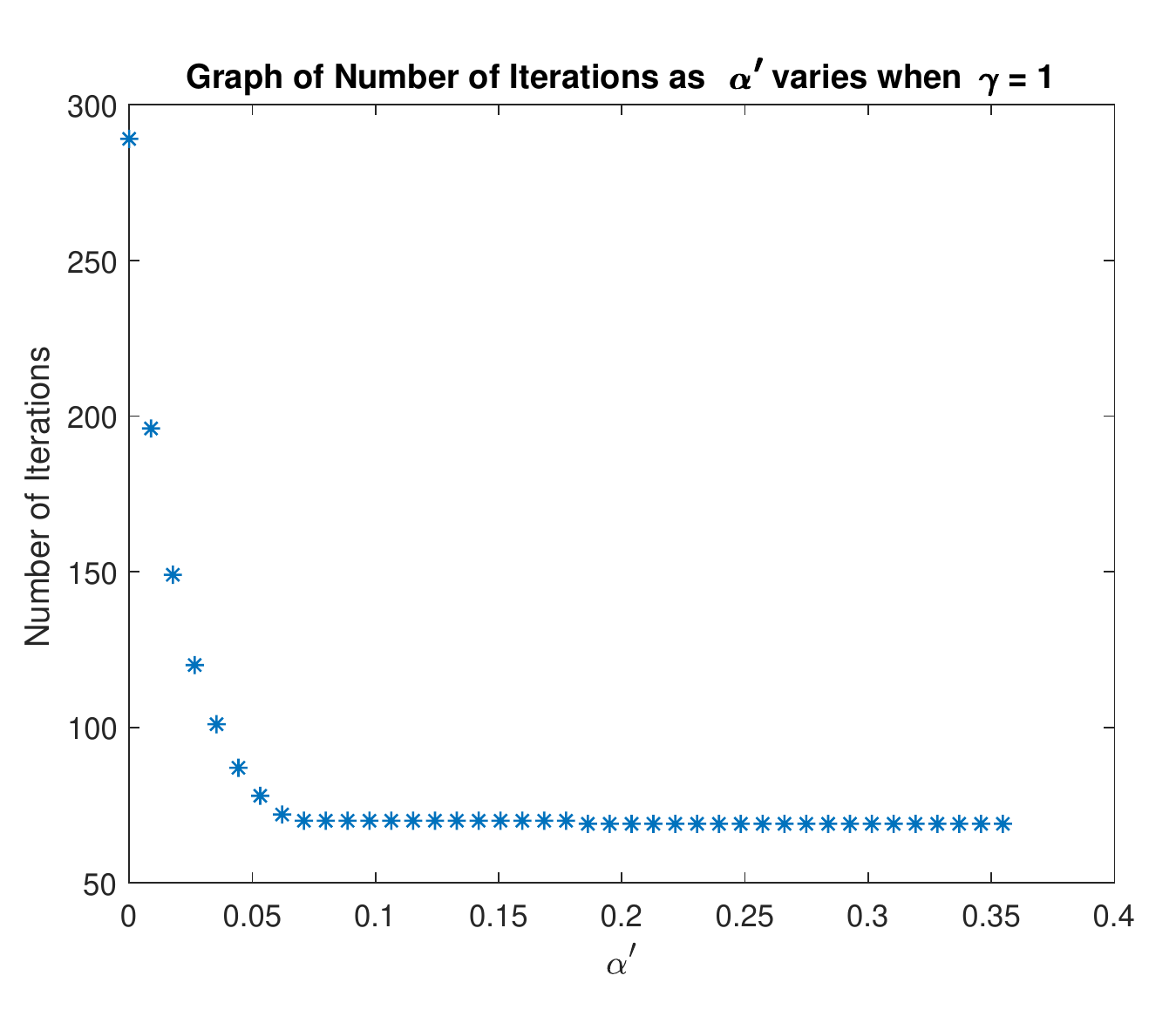}
\quad
\includegraphics[width=8cm]{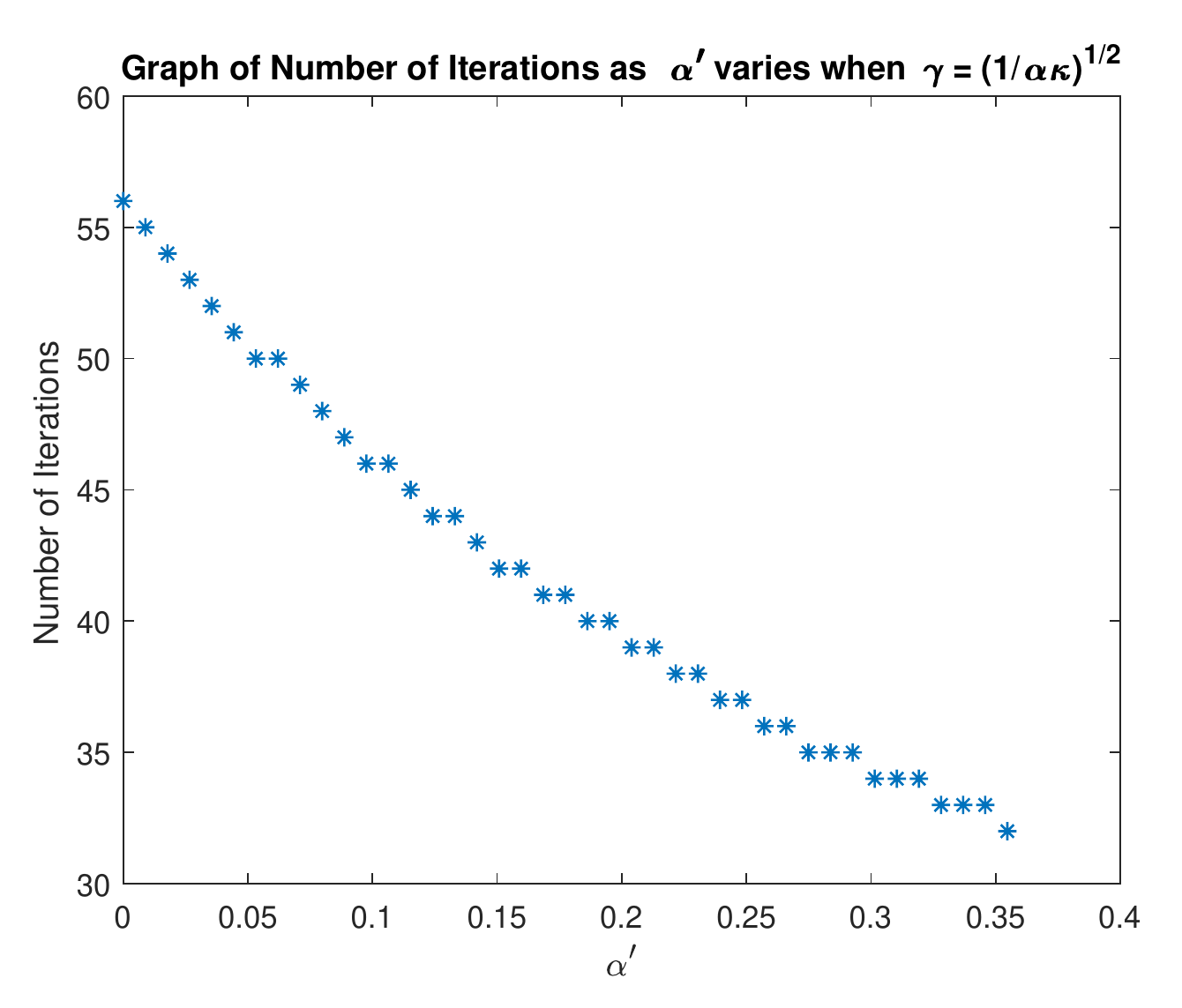}
\centering
\caption{Two graphs showing how the number of iterations performed by (\ref{PR2}) changes with varying $\alpha^\prime$ using
the partition  $(A,B)$ given \eqref{eq:parti}.} \label{figure1}
\end{figure}
We see from Figure \ref{figure1} that the number of iterations decreases as $\alpha^\prime$ increases in both cases.
 These graphs again suggest that it might be advantageous to have
 $A$ and $B$ maximal $\beta$-strongly monotone with $\beta > 0$.
We also observe that as $\alpha^\prime$ approaches $\alpha$, the number of iterations does not increase even though
 the operator $A$ is losing its strong monotonicity.  


In our third experiment, we performed the same numerical experiments as the ones mentioned above but
with $(A,B) = (\partial g + \alpha^\prime I, \partial f - \alpha^\prime I)$ instead of
$(A,B) = (\partial f - \alpha^\prime I,  \partial g + \alpha^\prime I)$ and note that the results obtained were very similar
to the ones reported above.
Hence interchanging $A$ and $B$ in the implementation of the relaxed PR splitting method have little
impact on its performance.

}}

\section{Concluding remarks}\label{conclusion}

{\color{black}{
This paper establishes convergence of the sequence of iterates
and an ${\cal O}(1/{k})$ ergodic convergence rate bound  for the relaxed PR splitting method
for any $\theta \in (0, 2 + \gamma \beta]$ by
viewing it as an instance of a non-Euclidean HPE framework. It also establishes
an ${\cal O}(1/\sqrt{k})$ pointwise convergence rate bound for it for any $\theta \in (0, 2 + \gamma \beta)$.
Furthermore, an example showing that PR iterates 
do not necessarily converge for $\beta \ge 0$ and $\theta \geq \min \{ 2(1+\gamma \beta), 2 + \gamma \beta + 1/(\gamma \beta) \}$
is given.

Table \ref{table1} (resp., Table \ref{table2}) for the case in which  $\beta=0$ (resp., $\beta>0$) provides a summary of the convergence rate results known so far for the relaxed PR splitting method when
$(A,B)=( \partial f, \partial g)$ for some convex functions $f$ and $g$.  However, we observe that some of these
results also hold for pairs $(A,B)$ of maximal monotone operators which are not subdifferentials.
The term ``R-linear" in the tables below stands for linear convergence of the sequence $\{ x_k \}$ generated by the relaxed PR splitting method.  

\begin{table}[htpb]
\centering
\begin{tabular}{||c|c|c|c|c||}  \hline
\multicolumn{4}{||c|}{Convergence}   & Additional conditions on $f, g$  \\\cline{1-4}
 $\theta$  &  Rate &  Type      & Reference   & besides convexity \\ \hline \hline
 $(0,2)$  & $\mathcal{O}(1/\sqrt{k})$ & Pointwise  &  \cite{He}  &  $-$  \\ \hline
 $(0,2)$ & $o(1/\sqrt{k})$ & Pointwise &   \cite{Davis} & $-$ \\ \hline \hline
 $(0,2]$  & $\mathcal{O}(1/k)$ &  Ergodic        &  \cite{Davis}            &  $-$  \\ \hline \hline
   $(0,2)$ & $o(1/k)$       & Best iterate   & \cite{Davis2} & $\nabla f$ or $\nabla g$ Lipschitz \\ \hline \hline
 $1,2$      & $-$    & R-linear   & \cite{MR551319}  & $f$ strongly convex and $\nabla f$ Lipschitz \\ \hline
 $(0,2)$    & $-$  & R-linear & \cite{Giselsson2} & $g$ strongly convex and $\nabla f$ Lipschitz 
\\ \hline
 $(0,2]$ & $-$ & R-linear & \cite{Davis2} & ($f$ or $g$ strongly convex) and  \\
        &     &           &                &    ($\nabla f$ or $\nabla g$ Lipschitz)             \\ \hline
 $(0,2]$ & $-$ & R-linear & \cite{Dong} & ($f$ or $g$ strongly convex) and  \\
        &     &           &                &    $\nabla f$ Lipschitz            \\ \hline        
 $(0,\frac{4}{1+\delta})$  &  $-$ & R-linear & \cite{Giselsson} (resp., \cite{Giselsson2})  & $f$ (resp., $g$) strongly convex and
 \\
 $\delta \in (0,1)$        &                        &   &                  & $\nabla f$ (resp., $\nabla g$)
Lipschitz  \\ \hline
\end{tabular}
\caption{Summary of convergence rate results for the relaxed PR splitting method on the sum of two convex functions $f, g$.}\label{table1}
\end{table}



\begin{table}[htpb]
\centering
\begin{tabular}{||c|c|c|c|c||}  \hline
\multicolumn{4}{||c|}{Convergence}   &  Additional conditions on $f, g$  \\\cline{1-4}
 $\theta$ & Rate &  Type &   Reference        & besides strong convexity \\ \hline \hline
 $(0,2 + \gamma \beta)$  & $\mathcal{O}(1/\sqrt{k})$ & Pointwise & this paper &  $-$  \\ \hline \hline
 $(0,2+ \gamma \beta]$  & $\mathcal{O}(1/k)$ & Ergodic & this paper  &  $-$ \\ \hline \hline
 $(0,2]$                & $-$ & R-linear & \cite{Dong} & $\nabla f$ Lipschitz \\  \hline
\end{tabular}
\caption{Summary of convergence rate results for the relaxed PR splitting method on the sum of two $\beta$-strongly convex functions $f, g$, with $\beta > 0$.}\label{table2}
\end{table}
}}
We observe that our analysis in Sections \ref{sec:PRmethod} and \ref{discussions}, in contrast to the ones in \cite{Davis2,Giselsson2,Giselsson,MR551319}, does not impose any regularity condition on $A$ and $B$ such as assuming one
of them to be a Lipschitz, and hence point-to-point, operator. Also, if only one of the operators, say $A$, is assumed to be maximal
$\beta$-strongly monotone,
\eqref{eq:mainincl} is equivalent to $0\in(A'+B')(u)$ where $A':=A-(\beta/2)I$  and $B':=B+(\beta/2)I$ are now both $(\beta/2)$-strongly monotone.
Thus, to solve \eqref{eq:mainincl},  the relaxed PR method with $(A,B)$ replaced by $(A',B')$ can be applied,
thereby ensuring convergence of the iterates, as well as pointwise and ergodic
convergence rate bounds, for values of $\theta \ge 2$.
{\color{black}{This idea was tested in our computational results of Section \ref{numerical} where
a weighted Lasso minimization problem \cite{Candes} is solved using the partitions $(A,B)$ and $(A',B')$.
Our conclusion is that the partition $(A',B')$  substantially outperforms  the other one for values of $\theta \ge 2$.

}}



{\footnotesize

\bibliographystyle{plain}
\bibliography{RADMM_ref2}

\def\cprime{$'$}
\begin{thebibliography}{10}

\bibitem{Bauschke}
H.H. Bauschke and P.L. Combettes.
\newblock {\em Convex Analysis and Monotone Operator Theory in Hilbert Spaces}.
\newblock Springer, 2011.

\bibitem{Bauschke1}
H.H. Bauschke, J.Y.~Bello Cruz, T.T.A. Nghia, H.M. Phan, and X.~Wang.
\newblock The rate of linear convergence of the {D}ouglas-{R}achford algorithm
  for subspaces is the cosine of the {F}riedrichs angle.
\newblock {\em Journal of Approximation Theory}, 185:63--79, 2014.

\bibitem{Bauschke2}
H.H. Bauschke, J.Y.~Bello Cruz, T.T.A. Nghia, H.M. Phan, and X.~Wang.
\newblock Optimal rates of linear convergence of relaxed alternating
  projections and generalized {D}ouglas-{R}achford methods for two subspaces.
\newblock {\em Numerical Algorithms}, 73:33--76, 2016.

\bibitem{Bauschke3}
H.H. Bauschke, D.~Noll, and H.M. Phan.
\newblock Linear and strong convergence of algorithms involving averaged
  nonexpansive operators.
\newblock {\em Journal of Mathematical Analysis and Applications}, 421:1--20,
  2015.

\bibitem{Burachik1}
R.S. Burachik, C.A. Sagastiz\'{a}bal, and B.F. Svaiter.
\newblock $\varepsilon$-enlargements of maximal monotone operators: Theory and
  applications.
\newblock In {\em Reformulation: Nonsmooth, Piecewise Smooth, Semismooth and
  Smoothing Methods (Lausanne, 1997)}, volume~22 of {\em Applied Optimization},
  pages 25--43. Kluwer Academic Publishers, Dordrecht, The Netherlands, 1999.

\bibitem{Candes}
E.J. Candes, M.B. Wakin, and S.P. Boyd.
\newblock Enhancing sparsity by reweighted $l_1$ minimization.
\newblock {\em Journal of Fourier Analysis and Applications}, 14(5-6):877--905,
  2008.

\bibitem{Combettes1}
P.L. Combettes.
\newblock Solving monotone inclusions via compositions of nonexpansive averaged
  operators.
\newblock {\em Optimization}, 53(5-6):475--504, 2004.

\bibitem{Combettes2}
P.L. Combettes.
\newblock Iterative construction of the resolvent of a sum of maximal monotone
  operators.
\newblock {\em Journal of Convex Analysis}, 16(3):727--748, 2009.

\bibitem{Davis3}
D.~Davis.
\newblock Convergence rate analysis of the {F}orward-{D}ouglas-{R}achford
  splitting scheme.
\newblock {\em SIAM Journal on Optimization}, 25:1760--1786, 2015.

\bibitem{Davis}
D.~Davis and W.~Yin.
\newblock Convergence rate analysis of several splitting schemes.
\newblock {\em arXiv preprint arXiv:$1406.4834v3$}, 2015.

\bibitem{Davis2}
D.~Davis and W.~Yin.
\newblock Faster convergence rates of relaxed {P}eaceman-{R}achford and {ADMM}
  under regularity assumptions.
\newblock {\em Mathematics of Operations Research}, 42(3):783--805, 2017.

\bibitem{Dong}
Y.~Dong and A.~Fischer.
\newblock A family of operator splitting methods revisited.
\newblock {\em Nonlinear Analysis}, 72:4307--4315, 2010.

\bibitem{Eckstein}
J.~Eckstein and D.P. Bertsekas.
\newblock On the {D}ouglas-{R}achford splitting method and the proximal point
  algorithm for maximal monotone operators.
\newblock {\em Mathematical Programming}, 55:293--318, 1992.

\bibitem{Facchinei}
F.~Facchinei and J.-S. Pang.
\newblock {\em Finite-Dimensional Variational Inequalities and Complementarity
  Problem, Volume II}.
\newblock Springer-Verlag, New York, 2003.

\bibitem{Giselsson2}
P.~Giselsson.
\newblock Tight global linear convergence rate bounds for {D}ouglas-{R}achford
  splitting.
\newblock {\em Journal of Fixed Point Theory and Applications}, in press, 2017.

\bibitem{Giselsson}
P.~Giselsson and S.~Boyd.
\newblock Linear convergence and metric selection for {D}ouglas-{R}achford
  splitting and {ADMM}.
\newblock {\em IEEE Transactions on Automatic Control}, 62:532--544, 2017.

\bibitem{Goncalves}
M.L.N. Goncalves, J.G. Melo, and R.D.C. Monteiro.
\newblock Improved pointwise iteration-complexity of a regularized {ADMM} and
  of a regularized non-{E}uclidean {HPE} framework.
\newblock {\em arXiv preprint arXiv:$1601.01140v1$}, 2016.

\bibitem{He}
B.~He and X.~Yuan.
\newblock On the convergence rate of {D}ouglas-{R}achford operator splitting
  method.
\newblock {\em Mathematical Programming, Series A}, 153:715--722, 2015.

\bibitem{He3}
Y.~He and R.D.C. Monteiro.
\newblock Accelerating block-decomposition first-order methods for solving
  composite saddle-point and two-player {N}ash equilibrium problems.
\newblock {\em SIAM Journal on Optimization}, 25:2182--2211, 2015.

\bibitem{He1}
Y.~He and R.D.C. Monteiro.
\newblock An accelerated {HPE}-type algorithm for a class of composite
  convex-concave saddle-point problems.
\newblock {\em SIAM Journal on Optimization}, 26:29--56, 2016.

\bibitem{Kolossoski}
O.~Kolossoski and R.D.C. Monteiro.
\newblock An accelerated non-{E}uclidean hybrid proximal extragradient-type
  algorithm for convex-concave saddle-point problems.
\newblock {\em Preprint}, 2015.

\bibitem{MR551319}
P.-L. Lions and B.~Mercier.
\newblock Splitting algorithms for the sum of two nonlinear operators.
\newblock {\em SIAM J. Numer. Anal.}, 16(6):964--979, 1979.

\bibitem{Sicre}
R.D.C. Monteiro, M.R. Sicre, and B.F. Svaiter.
\newblock A hybrid proximal extragradient self-concordant primal barrier method
  for monotone variational inequalities.
\newblock {\em SIAM Journal on Optimization}, 25(4):1965--1996, 2015.

\bibitem{Monteiro}
R.D.C. Monteiro and B.F. Svaiter.
\newblock On the complexity of the hybrid proximal extragradient method for the
  iterates and the ergodic mean.
\newblock {\em SIAM Journal on Optimization}, 20:2755--2787, 2010.

\bibitem{monteiro2011complexity}
R.D.C. Monteiro and B.F. Svaiter.
\newblock Complexity of variants of {Tseng}'s modified {F-B} splitting and
  {K}orpelevich's methods for hemivariational inequalities with applications to
  saddle-point and convex optimization problems.
\newblock {\em SIAM Journal on Optimization}, 21(4):1688--1720, 2011.

\bibitem{Monteiro1}
R.D.C. Monteiro and B.F. Svaiter.
\newblock Complexity of variants of {T}seng's modified {F}-{B} splitting and
  {K}orpelevich's methods of hemi-variational inequalities with applications to
  saddle point and convex optimization problems.
\newblock {\em SIAM Journal on Optimization}, 21:1688--1720, 2011.

\bibitem{Rockafellar2}
R.T. Rockafellar.
\newblock On the maximality of sums of nonlinear monotone operators.
\newblock {\em Transactions of the American Mathematical Society}, 149:75--88,
  1970.

\bibitem{Rockafellar1}
R.T. Rockafellar.
\newblock Monotone operators and the proximal point algorithm.
\newblock {\em SIAM Journal on Control and Optimization}, 14:877--898, 1976.

\bibitem{Solodov}
M.V. Solodov and B.F. Svaiter.
\newblock A hybrid approximate extragradient-proximal point algorithm using the
  enlargement of a maximal monotone operator.
\newblock {\em Set-Valued and Variational Analysis}, 7:323--345, 1999.

\bibitem{Solodov1}
M.V. Solodov and B.F. Svaiter.
\newblock An inexact hybrid generalized proximal point algorithm and some new
  results on the theory of {B}regman functions.
\newblock {\em Mathematics of Operations Research}, 25:214--230, 2000.

\end{thebibliography}
}

\section*{Appendix}

\noindent {\bf{Proof of Lemma \ref{lem:monotonicity}}}:  To simplify notation, let
\[
\Delta x=\Delta x_k, \quad \Delta x^- := \Delta x_{k-1}, \quad \Delta u = u_k-u_{k-1}, \quad \Delta v = v_k-v_{k-1}, \quad
\Delta a = a_k-a_{k-1}, \quad \Delta b = b_k-b_{k-1}.
\]
Then, it follows from the second identity in \eqref{eq:DR-iter-theta} and relation \eqref{eq:def-ak-bk} that
\begin{eqnarray}\label{equalities}
\Delta x = \Delta x^- + \theta (\Delta v - \Delta u), \quad \gamma \Delta a = \Delta x^- - \Delta u, \quad
\gamma \Delta b = 2 \Delta u - \Delta v - \Delta x^-.
\end{eqnarray}
Also, the two inclusions in \eqref{ak} and \eqref{bk} together with the $\beta$-strong monotonicity of $A$ and $B$ imply that
\[
\inner{\Delta a}{\Delta u}_{\cX} \ge \beta \|\Delta u\|^2_{\cX}, \quad \inner{\Delta b}{\Delta v}_{\cX} \ge \beta \|\Delta v\|^2_{\cX}.
\]
Combining the last two identities in \eqref{equalities} with the above inequalities, we obtain
\[
\inner{ \Delta x^- - \Delta u}{\Delta u}_{\cX} \ge \gamma \beta \| \Delta u\|^2_{\cX}, \quad
\inner{ 2 \Delta u - \Delta v - \Delta x^-}{\Delta v}_{\cX} \ge \gamma \beta \| \Delta v\|^2_{\cX}.
\]
Adding these two last inequalities and simplifying the resulting expression, we obtain
\begin{eqnarray}\label{importantinequality}
\inner{ \Delta x^-}{\Delta u - \Delta v}_{\cX} + 2 \inner{\Delta u}{\Delta v}_{\cX} \ge (1+ \gamma \beta) [ \| \Delta u\|^2_{\cX} + \| \Delta v\|^2_{\cX} ]
\end{eqnarray}
From the first equality in (\ref{equalities}), we have
\begin{eqnarray*}
2\theta \inner{ \Delta x^-}{\Delta u - \Delta v}_{\cX} = \| \Delta x^- \|^2_{\cX} - \| \Delta x \|^2_{\cX} + \theta^2 \| \Delta v - \Delta u \|^2_{\cX},
\end{eqnarray*}
which upon substituting into (\ref{importantinequality}), the following is true:
\begin{eqnarray*}
\| \Delta x^- \|^2_{\cX} - \| \Delta x \|^2_{\cX} \geq 2 \theta \left( \left[1 + \gamma \beta - \frac{\theta}{2}\right]( \| \Delta u \|^2_{\cX} + \| \Delta v \|^2_{\cX} ) + 2 \left[ \frac{\theta}{2}  - 1 \right] \inner{\Delta u}{\Delta v}_{\cX} \right).
\end{eqnarray*}
Note that the right-hand side in the above inequality is greater than or equal to zero if $\theta \in (0, 2\theta_0]$.  Hence, we have if $\theta \in (0, 2\theta_0]$,
\begin{eqnarray*}
\| \Delta x \|_{\cX} \leq \| \Delta x^- \|_{\cX}.
\end{eqnarray*}
{{\ \hfill\hbox{%
      \vrule width1.0ex height1.0ex
    }\parfillskip 0pt}\par}

\end{document}